\def \R{\mathbb R}
\def \N{\mathbb N}
\def \E{\mathbb E}
\newtheorem{thm}{Theorem}[section]
\newtheorem{cor}[thm]{Corollary}
\newtheorem{lem}[thm]{Lemma}
\newtheorem{prop}[thm]{Proposition}
\newtheorem{rem}[thm]{Remark}
\newtheorem{lemma}[thm]{Lemma}
\newtheorem{assump}[thm]{Assumption}
\DeclareMathOperator{\bE}{\mathbf{E}}
\newcommand{\norm}[1]{\left \| #1 \right \|}
\renewcommand{\tilde}{\widetilde}
\newcommand{\cK}{{\ensuremath{\mathcal K}} }
\newcommand{\cL}{{\ensuremath{\mathcal L}} }
\newcommand{\one}{{\ensuremath{\mathbbm{1}}}}
\numberwithin{equation}{section}
\author{J. Inglis and J. MacLaurin\\
\small{Inria Sophia Antipolis}}
\title{A general framework for stochastic traveling waves and patterns, with application to neural field equations\footnote{
This work was partially supported by the European Union Seventh Framework Programme (FP7) under grant agreement no. 269921 (BrainScaleS), no. 318723 (Mathemacs) and the Human Brain Project (HBP).
}}
\begin{document}
\maketitle

\begin{abstract}
In this paper we present a general framework in which to rigorously study the effect of spatio-temporal noise on traveling waves and stationary patterns. In particular the framework can incorporate versions of the stochastic neural field equation that may exhibit traveling fronts, pulses or stationary patterns.  To do this, we first formulate a local SDE that describes the position of the stochastic wave up until a discontinuity time, at which point the position of the wave may jump.  We then study the local stability of this stochastic front, obtaining a result that recovers a well-known deterministic result in the small-noise limit.  We finish with a study of the long-time behavior of the stochastic wave.
\end{abstract}

\section{Introduction}
Deterministic traveling waves have been widely used to model phenomena in a huge range of scientific areas, including chemical kinetics, population dynamics, combustion, transport in porous media, electroconvection and neuroscience. More generally, equations that exhibit spatial patterns are ubiquitous in the biomedical sciences and are a key lens through which emergent phenomena are studied (see for example \cite{maini1997spatial, murray2001mathematical, volpert-volpert} and \cite{xin}).  However, the effect of noise on these equations is much less well-developed, and works in this direction have in the past tended to focus either on specific situations (see for example \cite{brassesco-demassi-presutti, funaki} for the case of the Ginzburg-Landau equation or \cite{doering-mueller-smereka, harris-harris-kyprianou} for the FKPP equation), or numerical approximations (see for example \cite{lord-thummler}).

The goal of this paper is to introduce a framework in which it is possible to study stochastic perturbations of traveling wave solutions 
to a general class of evolution equations (which may include PDEs and integral equations). Our specific motivation is the recent interest in stochastic versions of the neural field equation (\cite{bressloff-kilpatrick, bressloff-webber, inglis-faugeras, kruger-stannat}).  The (deterministic) neural field equation and its variants are used in the neuroscience literature to model the spatio-temporal dynamics of macroscopic cortical activity (see \cite{bressloff-review} for a review).  In particular, as outlined in more detail in Section \ref{examples} below, one reason these equations are interesting is that they exhibit a traveling wave solution of the form $u(t, x) = \varphi_0(x-ct)$ for all $t\geq0$, $x\in\R$ and some speed $c\in\R$, where the wave form $\varphi_0$ satisfies the stationary equation
\begin{equation}
\label{into:stationary}
0 = A\varphi_0 + f(\varphi_0),
\end{equation}
and $A$ and $f$ are explicit linear and nonlinear operators respectively.  Due to translation invariance, it follows that $\varphi_\alpha:= \varphi_0(\cdot + \alpha)$ is also a solution for any $\alpha\in\R$, so that we in fact have a family $(\varphi_\alpha)_{\alpha\in\R}$ of solutions to \eqref{into:stationary}.  The stochastic evolution equation of interest is then given by
\begin{equation}
\label{intro:sde}
du_t = [Au_t + f(u_t)]dt + \varepsilon B(t)dW^Q_t, \quad t\geq0,
\end{equation}
whose solution $(u_t)_{t\geq0}$ is a functional-valued process i.e. $u_t:\R \to\R$ for all $t\geq0$. 
Here $\varepsilon>0$, $(W^Q_t)_{t\geq0}$ is a Hilbert space-valued noise and $B(t)$ is an operator-valued diffusion coefficient made precise below.  However, instead of working in the specific case of these neural field equations, we instead formulate general conditions on $A$, $f$ and $(\varphi_\alpha)_{\alpha\in\R}$ that allow us to study the effect of noise on a general class of wave and pattern forms.  The conditions are broad enough to include the important cases of traveling fronts and pulses.

One of the main ideas used in our work (developing those presented in \cite{bressloff-webber} and \cite{kruger-stannat}), is to compare the solution $(u_t)_{t\geq0}$ of \eqref{intro:sde} to the family of deterministic fronts $(\varphi_\alpha)_{\alpha\in\R}$. It is clear that if $\varepsilon =0$ and $u_0 = \varphi_0$ then $u_t = \varphi_0$ for all $t\geq0$.  However, when $\varepsilon>0$ the `stochastic front' will move in time i.e. the noise will influence the speed of the wave.  To describe this movement, it is natural to consider the dynamics of the global minimum of the map
\begin{equation}
\label{intro:min}
\alpha \mapsto \|u_t - \varphi_\alpha\|^2, \quad t\geq0,
\end{equation}
where $\|\cdot\|$ is the norm on an appropriate Hilbert space.
Indeed, if $\alpha$ attains this minimum, then $\varphi_\alpha$ is the front closest to $u_t$, and we say that the stochastic front is at position $\alpha\in\R$.  However, a key point our analysis highlights is that the dynamics of a global minimum of  \eqref{intro:min} may be quite complicated.  In particular the global minimum may not be uniquely defined, may be discontinuous as a function of time, and there may exist many local minima (meaning that a gradient-descent method to approximate the minimum of \eqref{intro:min} may only converge towards one of many local minimum).

Despite these complications, in Section \ref{sec:Tracking the wave front} below, we show that we can locally describe the behavior of any local minimum of \eqref{intro:min} with an SDE.  This goes further than the work of \cite{bressloff-webber} and \cite{kruger-stannat}, since our description is exact rather than a first order $\varepsilon$-expansion or an approximation.  We can also see that the solution of the SDE exists exactly up until the point at which the local minimum may become a saddle point.

The second part of this work (Sections \ref{sec:local stability} and \ref{sec:long-time stability}) focuses on the local stability for small $\varepsilon$ and long-time behavior of the stochastic wave fronts.  
An important result from the deterministic literature on traveling waves is that under some conditions (in particular on the spectrum of $A$) and in the case when $\varepsilon = 0$, if the initial condition $\|u_0 - \varphi_0\|$ is small enough, then there exists an $\alpha\in\R$ such that
\[
\|u_t - \varphi_\alpha\| \leq Me^{-bt}, \quad t\geq0,
\]
for some constants $M>0$ and $b>0$ i.e. the solution to \eqref{intro:sde} converges exponentially fast to one of the deterministic fronts.  A natural question is therefore to ask if there exist related results in the stochastic setting, where one can recover the deterministic result in the limit as $\varepsilon \to 0$.  One of our main results (Corollary \ref{cor:ineq}) does exactly this.  It is worth highlighting that our techniques do not involve any order expansions in $\varepsilon$. The drawback of this result is that it is local in nature, since it guarantees convergence only up until the first time that the noise becomes too big (although of course this becomes infinite in the limit as $\varepsilon\to0$).  The aim of the final section (Section \ref{sec:long-time stability}) is thus to try and study the long-time behavior of $\|u_t - \varphi_{\beta^*_t}\|^2$, where $\beta_t^*$ is any global minimum of the map \eqref{intro:min} for all $t\geq0$.  As mentioned above, this analysis is complicated by the fact that the process $\beta_t^*$ is highly discontinuous.  However, we can still derive a description of $\|u_t - \varphi_{\beta^*_t}\|^2$ for all $t\geq0$ under some conditions (see Theorem \ref{Proposition Nonlinear Inequality}).

The organization of the paper is as follows.  In Section \ref{General Setting} we describe the general deterministic setting we consider, and state our assumptions.   Section \ref{examples} then goes on to describe three motivating examples that fit into the general setting.  Section \ref{sec:stoch wave} introduces the stochastic version of the general traveling wave equation, and shows that such equations are well-posed, while in Section \ref{sec:Tracking the wave front} we describe what we mean by the position of the stochastic front.  Finally, as mentioned, Sections \ref{sec:local stability} and \ref{sec:long-time stability} deal with the local stability and long-time behavior of the stochastic wave fronts respectively.

\vspace{0.3cm}
\noindent\textit{Notation:}
As usual, $\mathcal{C}(\R^d)$ and $\mathcal{C}^\infty(\R^d)$ will denote the spaces of real-valued functions on $\R^d$ that are continuous and smooth respectively.  Moreover $L^p(\R^d)$ ($p\geq1$), will be the space of $p$-integrable functions with respect to the Lebesgue measure on $\R^d$.  Finally, for general Banach spaces $E_1, E_2$, we will denote by $L(E_1, E_2)$ the space of bounded linear operators $:E_1 \to E_2$.

\section{General setting}
\label{General Setting}

Let $E_0$ be a Banach space of $\R^N$-valued functions over $\R^d$, for $N,d\geq1$. Let $A$ and $f$ be linear and nonlinear operators respectively acting in $E_0$.
Suppose that there exists a family
$(\varphi_\alpha)_{\alpha\in\R} \subset E_0$ such that
\begin{equation}
\label{stationary equation}
A\varphi_\alpha+ f(\varphi_\alpha) = 0,\ \forall \alpha\in\R.
\end{equation}
Let $H:=[L^2(\R^d)]^N$, equipped with the standard inner product denoted by $\langle\cdot, \cdot \rangle$ and norm $\|\cdot\|$.
Let $E:= \varphi_0 + H$ (i.e. $u\in E$ if and only if $u = \varphi_0 + v$ for some $v$ in $H$), endowed with the topology inherited from $H$.

We make use of the following assumptions on $(\varphi_\alpha)_{\alpha\in\R}$, $f$ and $A$, which are similar to those imposed in \cite[Chapter 5]{volpert-volpert}.

\begin{assump}
\label{assump varphi}
Assume that the family $(\varphi_\alpha)_{\alpha\in\R}$ satisfies the following conditions.
\begin{itemize}
\item [(i)] The derivatives $[d^k/d\alpha^k] \varphi_\alpha$ (the derivatives being taken in the norm of the space $H$) exist for $k\in\{1, 2, 3\}$ 
and are all in the space $H$.  We will denote these derivatives by $\varphi'_\alpha, \varphi''_\alpha$, and $\varphi'''_\alpha$ respectively.  
\item[(ii)] $\alpha\mapsto \varphi'_\alpha, \varphi''_\alpha, \varphi'''_\alpha$ are all globally Lipschitz, $\|\varphi'_\alpha\|, \|\varphi''_\alpha\|, \|\varphi'''_\alpha\|$ are all independent of $\alpha$, and integration by parts holds i.e. $\langle \varphi_0', \varphi'_0\rangle = -\langle  \varphi''_0, \varphi_0\rangle$.
\item[(iii)] $\varphi'_\alpha\in\mathcal{D}(A^*)$ for all $\alpha\in\R$, $\alpha \mapsto A^*\varphi'_\alpha$ is globally Lipschitz and $\|A^*\varphi'_\alpha\|$ is independent of $\alpha$.
\item[(iv)] $\langle\varphi'_\alpha,\varphi'_{\beta+\alpha}\rangle \to 0$ as $|\beta| \to \infty$, uniformly in $\alpha\in\R$, $\lim_{|\alpha|\to\infty}|\langle \varphi_\alpha', \varphi_\alpha - \varphi_0\rangle|>0$ and either of the following hold:
\begin{itemize}
\item[(a)] $\|\varphi_\alpha - \varphi_0\| \to \infty$ as $|\alpha| \to \infty$; or
\item[(b)] $\varphi_0\in H$ and $\|u - \varphi_\alpha\| \to  \|\varphi_0\| + \|u\|$ as $|\alpha| \to \infty$, for all $u\in E$ $(=H)$.
\end{itemize}
\end{itemize}
\end{assump}

It is worth noting that we do not assume that $\varphi_0\in H$ necessarily.  However, under these assumptions we have that $\varphi_\alpha - \varphi_0 \in H$ for any $\alpha\in\R$ and therefore $\varphi_\alpha + v \in E$ for all $v\in H$ and $\alpha\in\R$.

\begin{assump}
\label{assump f}
Assume that the nonlinear function $f$ acting in $E$ is such that:
\begin{itemize}
\item [(i)] $f$ is defined on all of $E$, and for all $u\in E$ there exists $f'(u) \in L(H,H)$ such that for all $v \in H$,
\[
\lim_{h\to 0}\norm{\frac{f(u + hv) - f(u)}{h} - f'(u)v} = 0;
\]
\item[(ii)] $\sup_{u\in E} \|f'(u)\|_{L(H,H)}<\infty$ (so that $H\ni v \mapsto f(\varphi_\alpha  + v)$ is globally Lipschitz $\forall \alpha\in\R$);
\item[(iii)] the map $H\ni v \mapsto f'(v + \varphi_\alpha)$ is globally Lipschitz $\forall \alpha\in\R$.
\end{itemize}
\end{assump}

\begin{assump}
\label{assump A}
Assume that the operator $A$ is such that:
\begin{itemize}
\item [(i)] The restriction of $A$ to $H$ (also denoted by $A$) is the generator of a $\mathcal{C}_0$-semigroup on $H$.  Therefore (under Assumption \ref{assump f} (i)) $\mathcal{L}_\alpha = A + f'(\varphi_\alpha):H \to H$ is also the generator of $\mathcal{C}_0$-semigroup on $H$ for all $\alpha\in\R$.
\item[(ii)] The spectrum $\sigma(\mathcal{L}_\alpha)$ of $\mathcal{L}_\alpha$ is such that 
\[
\sigma(\mathcal{L}_\alpha) \subset \{\lambda\in \mathbb{C}: \mathfrak{Re}(\lambda) + a|\mathfrak{Im}(\lambda)| \leq -b\} \cup\{0\},
\]
for some positive constants $a$ and $b$, independent of $\alpha$.  Note that by differentiating \eqref{stationary equation} with respect to $\alpha$, $0$ is always a simple eigenvalue of $\mathcal{L}_\alpha$ corresponding to eigenvector $\varphi_\alpha'$.
\end{itemize}
\end{assump}
In what follows we will make precise at the start of each section which of these assumptions are needed.  In particular, we only use  Assumption \ref{assump A} (ii) in Section \ref{sec:local stability}.

\section{Examples}
\label{examples}
We will have two specific examples in mind that fit into this general setting: traveling fronts and pulses. These are outlined in greater detail further below. However our framework should be applicable to many other spatially-extended patterns, including Turing-type instabilities of reaction-diffusion systems, mechanical buckling or wrinkling, patterns in bacterial chemotaxis and a huge range of phenomena in neuroscience (as typically modeled using neural field equations). See \cite{murray2001mathematical} for a survey of all of the above, and \cite{bressloff-review,coombes2005waves,deco2008dynamic, ermentrout1998neural,hutt2003pattern} for a survey of applications in neuroscience.
\subsection{Traveling fronts}
\label{sec:traveling fronts}
One important example of a traveling front, that has motivated this work (and should be kept in mind throughout), is the classical neural field equation in one dimension.   This equation has the following form:
\begin{equation}
\label{NF}
\partial_tu_t(x) = -u_t(x) + \int_{\R}w(x - y)F(u_t(y))dy, \quad t\geq 0,\ x\in\R,
\end{equation}
where $w\in \mathcal{C}(\R)\cap L^1(\R)$ is the connectivity function, and $F:\R \to \R$ is a smooth and bounded sigmoid function (known as the nonlinear gain function).
It is known (see \cite{ermentrout-mcleod-93} for example) that under some conditions on the functions $w$ and $F$ (in particular that there exist precisely three solutions to the equation $x=F(x)$ at $0, a$ and $1$ with $0<a<1$), then there exists a unique (up to translations) function $\hat{u}\in\mathcal{C}^\infty(\R)$ and speed $c\in\R$ such that $u_t(x) = \hat{u}(x - ct)$ is a solution to \eqref{NF}, where  $\hat{u}$ is such that
\[
 \lim_{x\to-\infty} \hat{u}(x) =0, \qquad \lim_{x\to\infty} \hat{u}(x) = 1,
\]
so that $\hat{u}$ is indeed a wave front.   Note that in this case $\hat{u}$ itself is not in $L^2(\R)$, but it can be shown that all derivatives of $\hat{u}$ are bounded and in $L^2(\R)$.

Substituting $\hat{u}(x - ct)$ into \eqref{NF}, we see that $\hat{u}$ is such that $0= A\hat{u} + f(\hat{u})$,
where $Au := cu' - u$ and $f(u) = w*F(u)$, and $*$ denotes convolution as usual.
Moreover, due to translation invariance, we have that $\hat{u}_\alpha := \hat{u}(\cdot + \alpha)$ is also such that
\begin{align}
\label{deterministic solution}
0&= A\hat{u}_\alpha + f(\hat{u}_\alpha), \qquad\alpha\in\R.
\end{align}
We are thus in a specific situation of the general setup described in the previous section, with $H=L^2(\R)$ and $\varphi_\alpha :=\hat{u}_\alpha$.  Indeed, it is straightforward to check that Assumptions \ref{assump varphi}, \ref{assump f} and  \ref{assump A} (i) are satisfied (in particular  Assumption \ref{assump varphi} (iv) (a)) since all derivatives of $\hat{u}$ are bounded and in $L^2(\R)$.  Assumption \ref{assump A} (ii) is more difficult to check and is the subject of recent and ongoing research (we are aware for example of a forthcoming article by E. Lang and W. Stannat in this direction).  It is at least satisfied in the case where the function $F$ is replaced by the Heaviside function (see \cite{Coombes-Owen, Pinto-Jackson-Russell-Eugene, Sandstede, Zhang}). It should however be noted that one should be careful when comparing results for Heaviside functions with results for smooth sigmoid functions.  Other recent works that have studied the stability of traveling waves for smooth nonlinear gain functions $F$ include \cite{chossat-faye-rankin}.

\subsection{Traveling pulses}
One can modify the classical neural field equation \eqref{NF} to produce traveling pulse solutions in the following way.  Indeed consider the system
\begin{equation}
\label{NFwA}
\begin{cases}
\partial_tu_t = -u_t + \int_{\R}w(\cdot - y)F(u_t(y))dy -v_t, \quad t\geq 0\\
\partial_tv_t =  \theta u_t - \beta v_t,
\end{cases}
\end{equation}
where as above $F:\R \to \R$ is a smooth and bounded sigmoid function, $w\in \mathcal{C}(\R)\cap L^1(\R)$ and $\theta>0, \beta\geq0$ are some constants with $\theta<<\beta$ . This is called the neural field equation with adaptation (see for example \cite[Section 3.3]{bressloff-review} for a review).   This time we look for a solution to \eqref{NFwA} of the form $(u_t, v_t) = (\hat{u}(\cdot-ct), \hat{v}(\cdot-ct))$ for some $c\in\R$, such that $\hat{u}(x)$ and $\hat{v}(x)$ decay to zero as $x\to\pm\infty$.  Substituting this into \eqref{NFwA}, we are thus looking for a solution to the equation
\begin{equation}
\label{NFwAvector}
cU'(x) = \left(\begin{array}{cc} -1 &- 1\\ \theta  & -\beta\end{array}\right)U(x)+ f(U)(x), \quad x\in \R,
\end{equation}
where $U(x) = (\hat{u}(x), \hat{v}(x))$, and $f(U)(x) := (w*F(\hat{u})(x), 0)^T$, for all $x\in\R$.

It can be shown (see \cite[Section 3.1]{pinto-ermentrout} or \cite{faye-scheel})
that there exists (again under some conditions on the parameters) a smooth function $U := (\hat{u}, \hat{v})\in [L^2(\R)]^2$ and speed $c\in\R$ such that $U$ is a solution to \eqref{NFwAvector}.  Moreover $\hat{u}$ and $\hat{v}$ are both smooth functions whose derivatives are all bounded and in $L^2(\R)$.  Thus, again by translation invariance we have that $U_\alpha:= U(\cdot + \alpha)\in [L^2(\R)]^2$ is a solution to
\[
AU_\alpha + f(U_\alpha) = 0
\]
for all $\alpha\in\R$, where 
\[
AU: = cU' - \left(\begin{array}{cc} -1 &- 1\\ \theta  & -\beta\end{array}\right)U, \quad\forall U\in [L^2(\R)]^2.
\]
Once again we are thus in a specific situation of the general setup described in Section \ref{General Setting}, this time with $H=[L^2(\R)]^2$ and $\varphi_\alpha :=U_\alpha$.    Indeed, it is again straightforward to check that Assumptions \ref{assump varphi}, \ref{assump f} and  \ref{assump A} (i) are satisfied (this time $\varphi_0\in H$, so that $E=H$ and we can show that Assumption \ref{assump varphi} (iv) (b) holds).   Since $\hat{u}(x)\to0$ as $x\to\pm\infty$, we say that the solution is a traveling pulse.

Assumption \ref{assump A} (ii) is again more difficult to check but it is still satisfied in the case where the function $F$ is replaced by the Heaviside function (see again \cite{Coombes-Owen, Pinto-Jackson-Russell-Eugene, Sandstede, Zhang}).

\section{Generalized stochastic traveling wave equation}
\label{sec:stoch wave}
Suppose that $(\varphi_\alpha)_{\alpha\in\R}$, $f$ and $A$ satisfy Assumptions \ref{assump varphi}, \ref{assump f} and \ref{assump A} (i) respectively. Consider the following stochastic evolution equation
\begin{equation}
\label{main}
du_t = [Au_t + f(u_t)]dt + \varepsilon B(t)dW_t^Q,
\end{equation}
where $\varepsilon>0$ and $(W_t^Q)_{t\geq0}$ is an $H$-valued $Q$-Wiener process on the filtered probability space $(\Omega, \mathcal{F}, \{\mathcal{F}_t\}_{t\geq0}, \mathbb{P})$ with $Q$ a bounded, symmetric, non-negative definite linear operator on $H$ such that $\mathrm{Tr}(Q) < \infty$.  We work with the following assumptions on the noise:

\begin{assump}
\label{assump noise}
Assume that:
\begin{itemize}
\item[(i)] $B: [0, \infty) \to L(H, H)$ is continuous, and there exists a constant $C$ with $\|B(t)\|_{L(H, H)} \leq C$ for all $t\geq0$.
\item[(ii)] $B(t)$ is a unitary operator on $H$ for all $t\geq0$ i.e. $B(t)^{*}B(t) = \mathrm{Id}$ for all $t\geq0$.
\end{itemize}
\end{assump}

We will work in the general setting, but we will keep the three examples of Section \ref{examples} in mind.

\begin{prop}
\label{K-S}
Suppose that the (deterministic) initial condition $u_0$ is such that $v^\alpha_0:= u_0 - \varphi_\alpha \in H$ for some $\alpha\in\R$. Then stochastic evolution equation \eqref{main} has a unique solution, which can be decomposed (in a non-unique way) as $u_t = \varphi_\alpha + v^\alpha_t$ where $(v^\alpha_t)_{t\geq0}$ is the unique weak (and mild) $H$-valued solution to
\[
dv^\alpha_t  = [Av^\alpha_t+ f(\varphi_\alpha + v^\alpha_t) -  f(\varphi_\alpha)]dt + \varepsilon B(t) dW_t^Q, \quad t\geq0,
\]
with initial condition $v^\alpha_0$ i.e.
\[
v^\alpha_t  = P^A_{t}v^\alpha_0+ \int_0^tP^A_{t-s}\left[f(\varphi_\alpha + v^\alpha_s) -  f(\varphi_\alpha)\right]ds + \varepsilon\int_0^t P^A_{t-s}B(s) dW_s^Q, \quad t\geq0.
\]
where $(P^A_t)_{t\geq0}$ is the semigroup generated by $A$.
\end{prop}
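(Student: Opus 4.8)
The plan is to reduce the stochastic evolution equation \eqref{main} to a classical semilinear stochastic evolution equation with an $H$-valued solution, by shifting by $\varphi_\alpha$, and then to invoke the standard well-posedness theory. Because $\varphi_0$ (hence $\varphi_\alpha$) need not lie in $H$, I would begin by declaring what is meant by a solution of \eqref{main}: a process of the form $\varphi_\alpha + v^\alpha_t$ with $(v^\alpha_t)_{t\ge0}$ an $H$-valued predictable process of suitable integrability such that $v^\alpha$ solves the shifted equation displayed in the statement. The stationary equation \eqref{stationary equation} will then be used to show that this notion does not depend on the chosen $\alpha$. The reduction itself is a direct computation: if $u_t = \varphi_\alpha + v_t$, then using $A\varphi_\alpha = -f(\varphi_\alpha)$ from \eqref{stationary equation} the drift $Au_t + f(u_t)$ equals $Av_t + f(\varphi_\alpha+v_t) - f(\varphi_\alpha)$, which is exactly the drift of the equation for $v^\alpha_t$, while the noise term $\varepsilon B(t)\,dW^Q_t$ is unchanged.

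Next I would verify that the shifted equation falls within the classical framework. Set $g_\alpha(v) := f(\varphi_\alpha + v) - f(\varphi_\alpha)$ for $v\in H$. By Assumptions \ref{assump f}(i) and (ii), $g_\alpha\colon H \to H$ is globally Lipschitz with constant $\le \sup_{u\in E}\|f'(u)\|_{L(H,H)}$, uniformly in $\alpha$; and $g_\alpha(0) = 0$, so $g_\alpha$ is of linear growth. By Assumption \ref{assump noise}(i) the map $s \mapsto \varepsilon B(s)$ is continuous and uniformly bounded in $L(H,H)$, and since $\mathrm{Tr}(Q) < \infty$ the stochastic convolution $\int_0^t P^A_{t-s}B(s)\,dW^Q_s$ is a well-defined, mean-square continuous $H$-valued process: writing $\|P^A_r\|_{L(H,H)} \le M e^{\omega r}$ one has $\int_0^t \|P^A_{t-s}B(s)Q^{1/2}\|^2_{\mathrm{HS}}\,ds \le M^2 e^{2\omega t} C^2\, \mathrm{Tr}(Q)\, t < \infty$. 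Hence the shifted equation is a globally Lipschitz perturbation of the generator $A$ of a $\mathcal{C}_0$-semigroup with additive, bounded, trace-class noise, and the standard theory of semilinear stochastic evolution equations in Hilbert spaces (a Banach fixed-point argument in a space of $H$-valued predictable processes, as in the monograph of Da Prato and Zabczyk) yields a unique mild solution $(v^\alpha_t)_{t\ge0}$, given precisely by the stated variation-of-constants formula; in this regime mild and weak solutions coincide, so $v^\alpha$ is also the unique weak solution. Then $u_t := \varphi_\alpha + v^\alpha_t$ is, by definition, a solution of \eqref{main}.

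For the uniqueness of $u$ and the asserted non-uniqueness of the decomposition, I would show that the shifted equations at different base points are mutually consistent. Fix $\beta\in\R$; Assumption \ref{assump varphi} gives $\varphi_\alpha - \varphi_\beta \in H$, so $v^\beta_0 := u_0 - \varphi_\beta = v^\alpha_0 + (\varphi_\alpha - \varphi_\beta) \in H$, and subtracting the two instances of \eqref{stationary equation} shows $\varphi_\alpha - \varphi_\beta \in \mathcal{D}(A)$ with $A(\varphi_\alpha - \varphi_\beta) = f(\varphi_\beta) - f(\varphi_\alpha) \in H$. The semigroup identity $(\mathrm{Id} - P^A_t)x = \int_0^t P^A_{t-s}(-Ax)\,ds$, valid for $x\in\mathcal{D}(A)$, then gives $(\mathrm{Id} - P^A_t)(\varphi_\alpha - \varphi_\beta) = \int_0^t P^A_{t-s}\big[f(\varphi_\alpha) - f(\varphi_\beta)\big]\,ds$. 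Adding $\varphi_\alpha - \varphi_\beta$ to the variation-of-constants formula for $v^\alpha$, inserting this identity, and using $f(\varphi_\alpha + v^\alpha_s) = f(\varphi_\beta + (v^\alpha_s + \varphi_\alpha - \varphi_\beta))$, one checks that $v^\beta_t := v^\alpha_t + (\varphi_\alpha - \varphi_\beta)$ satisfies the $\beta$-shifted variation-of-constants equation with datum $v^\beta_0$; by the uniqueness just established it is \emph{the} mild (equivalently, weak) solution of the $\beta$-shifted equation. This shows both that $u_t = \varphi_\beta + v^\beta_t$ for every admissible $\beta$ (so the decomposition is non-unique) and, reading the same computation in reverse, that any solution of \eqref{main} --- necessarily of the form $\varphi_\beta + w_t$ with $w$ a mild solution of the $\beta$-shifted equation --- satisfies $w_t - (\varphi_\alpha - \varphi_\beta) = v^\alpha_t$, so $u_t$ is uniquely determined.

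The analytic ingredients above --- the Lipschitz bound on $g_\alpha$, well-posedness of the stochastic convolution, and the fixed-point construction of the mild solution together with its coincidence with the weak solution --- are entirely routine. The step I expect to be the main obstacle is the bookkeeping forced by the fact that $\varphi_\alpha \notin H$: pinning down a workable definition of ``solution of \eqref{main}'', and then using \eqref{stationary equation} together with the semigroup identity to prove that the whole family of shifted equations is consistent, so that the existence and uniqueness statement for a single shifted equation genuinely transfers to the claim about $u_t$.
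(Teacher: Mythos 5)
Your proposal is correct and takes essentially the same route as the paper, which simply observes that after subtracting $\varphi_\alpha$ the shifted equation for $v^\alpha$ is a globally Lipschitz semilinear SDE on $H$ driven by a bounded $B$ and a trace-class $Q$-Wiener process, and cites \cite[Theorem 7.4]{D-Z}. The extra bookkeeping you supply — pinning down the meaning of ``solution of \eqref{main}'' when $\varphi_\alpha\notin H$, and the consistency calculation showing $v^\beta_t = v^\alpha_t + (\varphi_\alpha - \varphi_\beta)$ so that the non-unique decomposition yields a well-defined $u_t$ — is left implicit in the paper's one-line proof but is a sound elaboration rather than a departure in method.
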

\begin{proof}
The proof of this result is a straightforward application of \cite[Theorem 7.4]{D-Z} using the globally Lipschitz assumption on $f$ (Assumption \ref{assump f} (ii)), the fact that $A$ generates a $\mathcal{C}_0$-semigroup on $H$ (Assumption \ref{assump A} (i)) and the assumptions on $B$ above.  This is also a generalization of \cite[Theorem 3.1]{K-S}, though the proof is the same.
\end{proof}

\begin{rem}
We remark that for traveling waves, \eqref{main} is in the the moving coordinate frame.  To illustrate what we mean by this, suppose again we are in the concrete situation of the standard neural field equation described in Section \ref{sec:traveling fronts}, so that there is a solution $\hat{u}(x-ct)$ to \eqref{NF} for some speed $c$.  The stochastic version of this equation with purely additive noise would then be $du_t = [-u_t + w*F(u_t)]dt + dW_t^Q$.
In the moving frame (i.e. under the change of variable $x\mapsto x-ct$), the equation becomes
\[
du_t = [Au_t+ w*F(u_t)]dt + B(t)dW_t^Q,
\]
where as above $Au = cu' - u$, $u\in\mathcal{D}(A)$ and now $B(t)v := v(\cdot + ct)$ for $v\in H$. It is clear that such a $B$ clearly satisfies Assumption \ref{assump noise}.
\end{rem}

\section{Tracking the wave front}
\label{sec:Tracking the wave front}
Suppose that $(\varphi_\alpha)_{\alpha\in\R}$, $f$ and $A$ satisfy Assumptions \ref{assump varphi}, \ref{assump f} and \ref{assump A} (i) respectively. Consider the solution $(u_t)_{t\geq0}$ to \eqref{main} with initial condition $u_0$ such that $u_0 - \varphi_0\in H$ according to Proposition \ref{K-S}.  

If $\varepsilon =0$ and $u_0 = \varphi_0$, we would have that $u_t = \varphi_0$ for all $t\geq0$.  However, in the case when $\varepsilon >0$, the solution $(u_t)_{t\geq0}$ started from $\varphi_0$ will resemble a stochastic wave front, and its ``position'' will move.  In order to be able to keep track of this movement, we first have to give a precise definition of the position of the stochastic front at any time $t\geq0$.

To this end, we look for another decomposition of the solution $(u_t)_{t\geq0}$ to \eqref{main} as
\begin{equation}
\label{decomp}
u_t = z_t + \varphi_{\beta_t}, \quad t\geq0,
\end{equation}
for some general $\R$-valued stochastic process $(\beta_t)_{t\geq0}$ of bounded quadratic variation.  

Ideally, for each time $t\geq0$ we would like to choose $\beta_t$ in order to minimize the function
\begin{equation}
\label{min funct}
\alpha \mapsto m(t, \alpha):= \|u_t - \varphi_\alpha\|^2 
\end{equation}
over $\alpha\in\R$, so that $\varphi_{\beta_t}$ is then the closest of the family $\{\varphi_\alpha:\alpha\in\R\}$ of stationary solutions to the stochastic front $u_t$ in the $H$-norm.  We would then be able to say that $\beta_t$ is the \textit{position of the stochastic wave front} $u_t$ \textit{at time} $t$.

If $u_0 = \varphi_0$, it is clear that  there is a unique global minimizer of $m(0, \cdot)$, which is obtained at $0$. 
However, for times $t>0$ things are more complicated.  The following observation at least guarantees the existence of a global minimizer of the function under our conditions. 

\begin{lem}
\label{beta existence}
At every $t\geq0$ there exists at least one global minimum of the function $\alpha \mapsto m(t, \alpha) = \|u_t - \varphi_\alpha\|^2$.
\end{lem}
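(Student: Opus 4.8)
The plan is to exhibit a global minimizer by combining continuity of $m(t,\cdot)$ with enough growth at infinity to pin a minimizing sequence inside a compact set, splitting according to the dichotomy in Assumption \ref{assump varphi}(iv). First I would record continuity. Since $u_t \in E = \varphi_0 + H$ (Proposition \ref{K-S} gives $u_t = \varphi_\alpha + v^\alpha_t$ with $v^\alpha_t \in H$, and $\varphi_\alpha - \varphi_0 \in H$), we may write $w_t := u_t - \varphi_0 \in H$, so that $m(t,\alpha) = \|w_t - (\varphi_\alpha - \varphi_0)\|^2$. By Assumption \ref{assump varphi}(i)--(ii) the derivative $\alpha \mapsto \varphi'_\alpha$ is (Lipschitz, hence) bounded in $H$, so $\alpha \mapsto \varphi_\alpha - \varphi_0 = \int_0^\alpha \varphi'_s\,ds$ is Lipschitz from $\R$ into $H$; consequently $\alpha \mapsto m(t,\alpha)$ is continuous on $\R$. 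Everything below is carried out for a fixed realization on which $u_t \in E$.

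In case (a), $\|\varphi_\alpha - \varphi_0\| \to \infty$ as $|\alpha| \to \infty$, so by the reverse triangle inequality $m(t,\alpha)^{1/2} \geq \|\varphi_\alpha - \varphi_0\| - \|w_t\| \to \infty$; thus $m(t,\cdot)$ is coercive. The sublevel set $\{\alpha \in \R : m(t,\alpha) \leq m(t,0)\}$ is then closed (by continuity), bounded (by coercivity) and nonempty, hence compact, and the continuous function $m(t,\cdot)$ attains its infimum on it, which is the sought global minimum.

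In case (b) we have $\varphi_0 \in H$, so $E = H$ and $u_t \in H$, and Assumption \ref{assump varphi}(iv)(b) applied with $u = u_t$ gives $m(t,\alpha) = \|u_t - \varphi_\alpha\|^2 \to L := (\|\varphi_0\| + \|u_t\|)^2$ as $|\alpha| \to \infty$. Two elementary observations finish the proof. By the triangle inequality $m(t,0) = \|u_t - \varphi_0\|^2 \leq (\|u_t\| + \|\varphi_0\|)^2 = L$, and hence $I := \inf_{\alpha \in \R} m(t,\alpha) \leq L$. If $I = L$, then $m(t,0) = L = I$ and $\alpha = 0$ is already a global minimizer. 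If $I < L$, choose $\alpha_0$ with $m(t,\alpha_0) < L$ and, using $m(t,\alpha) \to L$, pick $R > |\alpha_0|$ such that $m(t,\alpha) > \tfrac12\bigl(m(t,\alpha_0) + L\bigr) > m(t,\alpha_0)$ whenever $|\alpha| > R$; minimizing the continuous function $m(t,\cdot)$ over the compact interval $[-R,R]$ yields $\alpha^* \in [-R,R]$ with $m(t,\alpha^*) \leq m(t,\alpha_0)$, so $m(t,\alpha^*) < m(t,\alpha)$ for all $|\alpha| > R$ and $\alpha^*$ is a global minimizer over $\R$.

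I expect the only genuinely delicate point to be the non-coercive case (b): there the limiting value $L$ attained at infinity may in principle coincide with the infimum of $m(t,\cdot)$, so coercivity alone does not suffice to produce a finite minimizer. The resolution is the trivial bound $m(t,0) \leq L$, which guarantees that whenever the infimum fails to lie strictly below $L$ it is in fact attained at $\alpha = 0$. The remaining ingredients — continuity of $m(t,\cdot)$ and the standard compactness argument — are routine.
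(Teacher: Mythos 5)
Your proof is correct and follows essentially the same two-case structure as the paper: coercivity via the reverse triangle inequality in case (a), and in case (b) the a priori bound $m(t,\cdot)^{1/2}\leq \|\varphi_0\|+\|u_t\|$ (the paper derives it by contradiction for all $\alpha$, you observe it directly at $\alpha=0$) combined with the limit at infinity to rule out the infimum escaping to $\pm\infty$. The only addition on your side is the explicit verification that $\alpha\mapsto\varphi_\alpha-\varphi_0$ is Lipschitz into $H$, which the paper leaves implicit when invoking continuity.
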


\begin{proof}
Suppose we are in case of Assumption \ref{assump varphi} (iv) (a). We have that for any $t\geq0$ and $\alpha\in\R$
\[
\|u_t - \varphi_\alpha\| = \|v^0_t + \varphi_0 - \varphi_\alpha\|
\]
where $(v^0_t)_{t\geq0}$ is the $H$-valued process as defined in Proposition \ref{K-S}. Thus $\|u_t - \varphi_\alpha\| \geq \|\varphi_0 - \varphi_\alpha\| - \|v^0_t\| \to \infty$
as $\alpha\to\pm\infty$, so the result holds by continuity.

On the other hand, suppose we are in case of Assumption \ref{assump varphi} (iv) (b).  Suppose (for a contradiction) that $\|u_t - \varphi_\alpha\| >  \|\varphi_0\| + \|u_t\| =  \|\varphi_\alpha\| + \|u_t\|$ for some $\alpha\in\R$.  Then by the triangle inequality
$\|\varphi_\alpha\| + \|u_t\| < \|u_t - \varphi_\alpha\| \leq  \|\varphi_\alpha\| + \|u_t\|$,
which is a contradiction.  Together with the fact that $\|u_t - \varphi_\alpha\| \to  \|\varphi_0\| + \|u_t\|$ as $\alpha\to\pm\infty$ by assumption, we again have the result.
\end{proof}
 
It is important to make two remarks at this point, both of which are illustrated in the concrete case of the traveling front solution to the
neural field equation below.  Firstly, in general we do not expect there to exist a unique global minimizer of $m(t, \cdot)$ at every time $t\geq0$.  The point is that we can have certain noises $W^Q(t, x)$ or initial conditions such that the solution $(u_t)_{t\geq0}$ to the equation \eqref{main} is at some time equally close in the $H$-norm to $\varphi_{\alpha_1}$ and $\varphi_{\alpha_2}$ with $\alpha_1\neq\alpha_2$.

The second important remark is that if $u_0 = \varphi_0$ and we continuously track the position of the initial global minimum of $m(t, \cdot)$,
as we do in the next section, 
then the noise might be such that this global minimum first becomes a \textit{local} minimum, and then might even cease to be a minimum at all (it becomes a saddle point).  Therefore any process $(\beta_t)_{t\geq0}$ attempting to keep track of a global minimum of  $m(t, \cdot)$ given by \eqref{min funct} (and hence to keep track of the position of the stochastic front) must be allowed to be \textit{discontinuous}.

In view of these two remarks we cannot simply define $\beta_t$ to be the global minimum of $m(t, \cdot)$ for all $t$.  Instead, in the next section we study the behavior of any local minimum of  $m(t, \cdot)$ up until the point at which it may become a saddle point.

\subsubsection{Illustration: The neural field equation}

Consider again the neural field equation \eqref{NF} discussed in Section \ref{examples}, but with an added continuous deterministic forcing term $t\mapsto g_t\in\mathcal{C}(\R)$ i.e.
\begin{equation}
\label{NFwdetnoise}
\partial_tu_t = -u_t + \int_{\R}w(\cdot - y)F(u_t(y))dy + g_t,
\end{equation}
for $t\geq0$.  We can simulate solutions to this equation, both in the case when $g_t(x) =0$ and $g_t(x) = 0.5\cos(t)e^{-10x^2}$, starting from the same initial condition.  The results are shown in Figure~\ref{fig1}.

\begin{figure}[!htb]
\centering
\begin{subfigure}{.5\textwidth}
  \centering
  \includegraphics[width=1.0\linewidth]{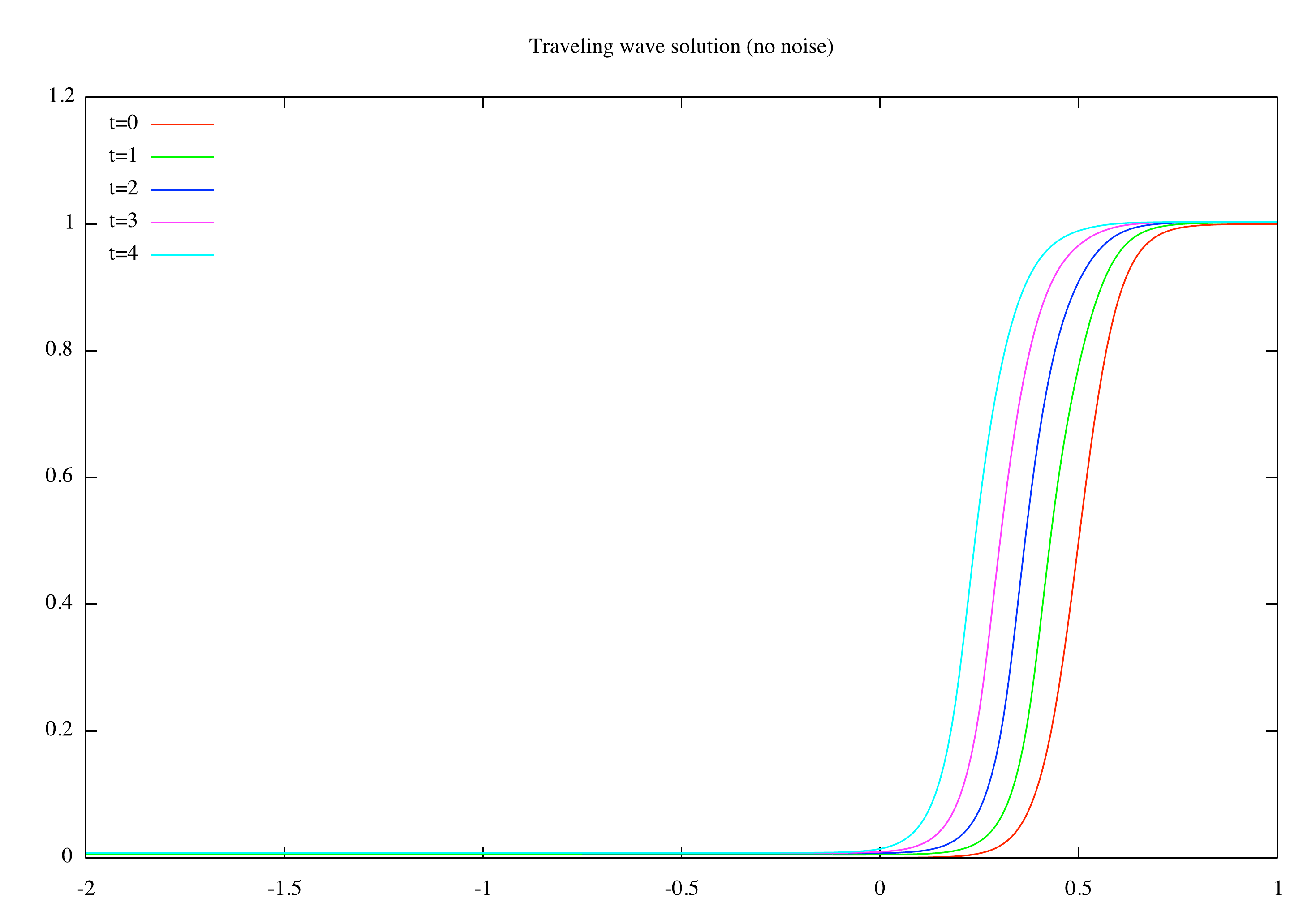}
   \end{subfigure}%
\begin{subfigure}{.5\textwidth}
  \centering
  \includegraphics[width=1.0\linewidth]{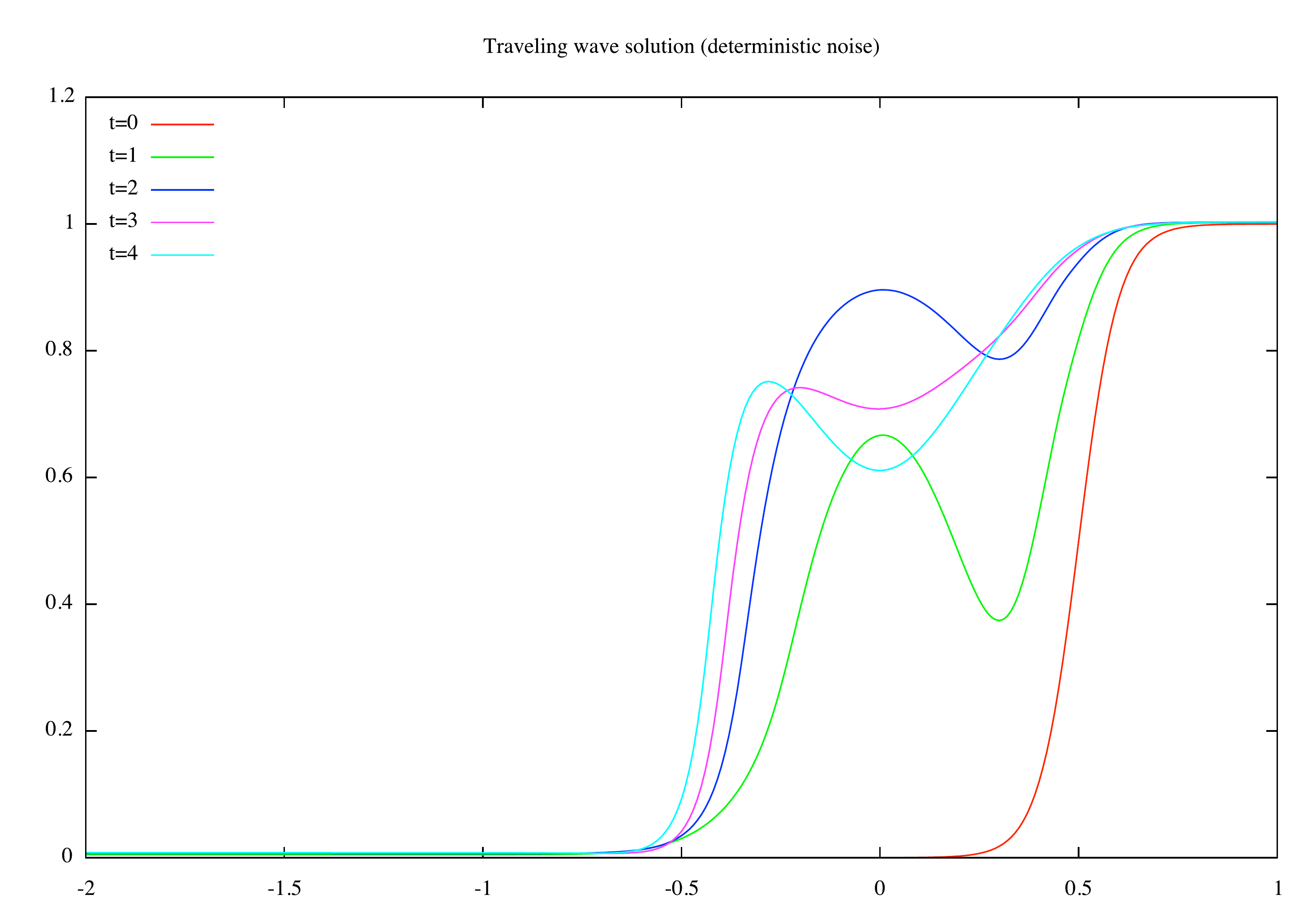}
\end{subfigure}
\caption{\label{fig1}\small{Simulations of the solution to \eqref{NFwdetnoise} with $w(x) = 10e^{-20|x|}$ and $F(x) = 0.5[1 + \mathrm{tanh}(10(x - 0.25))]$.  On the left $g_t\equiv0$, while on the right $g_t(x) = 0.5\cos(t)e^{-10x^2}$. }}
\end{figure}

We can now plot the function $\alpha \mapsto m(t,\alpha)$ given by \eqref{min funct} i.e. $\alpha\mapsto \|\varphi_\alpha - u_t\|^2$ where $(u_t)_{t\geq0}$ is a solution to \eqref{NFwdetnoise} and $g_t(x) = 0.5\cos(t)e^{-10x^2}$ (see Figure~\ref{fig2}).

\begin{figure}[!htb]
\centering
\includegraphics[width=0.6\linewidth]{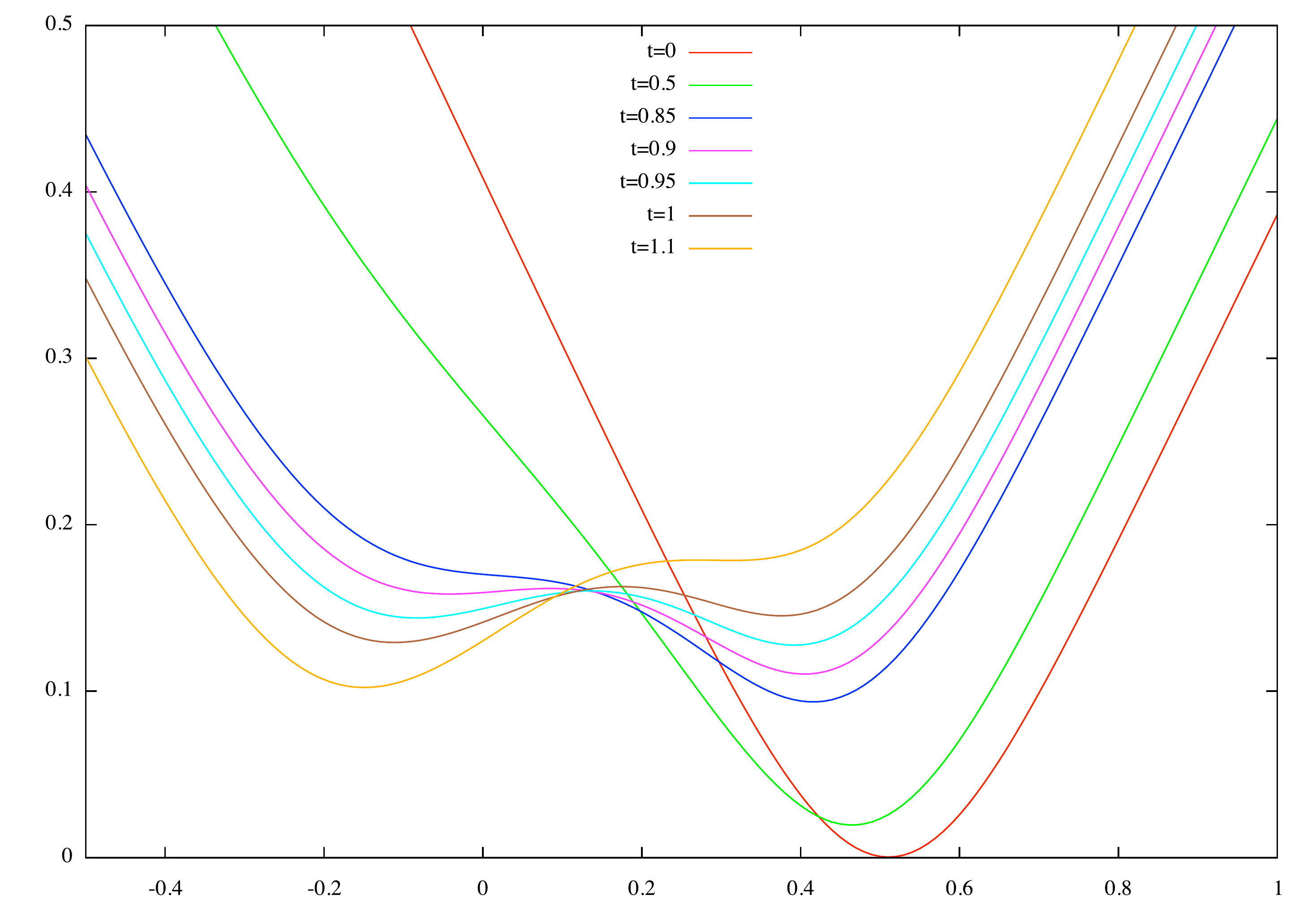}
\caption{\label{fig2}\small{Plots of the function $\alpha\mapsto\|\varphi_\alpha - u_t\|$ for different times $t$, where $(u_t)_{t\geq0}$ is a solution to \eqref{NFwdetnoise} and $g_t(x) = 0.5\cos(t)e^{-10x^2}$.}}
\end{figure}

Figure~\ref{fig2} illustrates nicely the fact that the global minimum around $\alpha=0.5$ at $t=0$ becomes a local minimum in between $t=0.95$ and $t=1.0$, and therefore that the position of the global minimum has jumped in between these times.  Moreover, at $t=1.1$ we see that the initial minimum has become a saddle point.

\subsection{The dynamics of local minima of  $\alpha\mapsto m(t, \alpha)$}
\label{sec:local dynamics} 
The aim of this section is to derive an $\R$-valued SDE that describes the behavior of any local minimum of the function $\alpha \mapsto m(t, \alpha)$ given by \eqref{min funct}, up until the point where it is no longer necessarily a local minimum.  

In order to obtain this equation, first suppose that $\beta_0$ is a local minimum of $m(0, \cdot)$.  The basic idea is then to look for a solution $\beta_t\in\R$ to
\begin{equation}
\label{zero first deriv}
\frac{d}{d\beta_t}\|u_t - \varphi_{\beta_t}\|^2 = -2\langle u_t - \varphi_{\beta_t},\varphi'_{\beta_t}\rangle =0,
\end{equation}
up until the first time $t$ when the solution is no longer necessarily a local minimum.  Such a time $t$ can be characterized by the first time that the second derivative
\[
\frac{d^2}{d\beta_t^2}\|u_t - \varphi_{\beta_t}\|^2 = -2\langle u_t ,\varphi''_{\beta_t}\rangle
\]
becomes $0$.  Although $u_t$ is not necessarily in $H$ (in particular in the traveling front case -- see Section \ref{sec:traveling fronts}), $\langle u_t ,\varphi''_{\beta_t}\rangle$
is well-defined since thanks to Proposition  \ref{K-S}, we may write $u_t = v^0_t + \varphi_0$, where $(v^0_t)_{t\geq0}$ is a well-defined $H$-valued stochastic process.  Thus (after an integration by parts)
\begin{equation}
\label{gamma}
-\langle u_t ,\varphi''_{\beta_t}\rangle = \langle\varphi'_0 ,\varphi'_{\beta_t}\rangle-\langle v^0_t,\varphi''_{\beta_t}\rangle =: \gamma(\beta_t, v_t^0),
\end{equation}
which is clearly well-defined.  Our solution to \eqref{zero first deriv} will therefore only be up until the first time that $\gamma(\beta_t, v_t^0)=0$.

The SDE describing the solution to \eqref{zero first deriv} up until this time is the following:
\begin{equation}
\label{beta}
d\beta_t = \mu(t, \beta_t,v_t^0)dt + \sigma(t, \beta_t, v_t^0)dW_t^Q, \qquad t\geq0,
\end{equation}
where $(v^0_t)_{t\geq0}$ is the $H$-valued process defined in Proposition \ref{K-S},
\begin{itemize}
\item 
\begin{equation}
\label{sigma}
\sigma(t, x, v) :=\frac{b(t, x)}{\gamma(x, v)}, 
\quad \forall x\in\R, v\in H,
\end{equation}
where $\gamma$ is defined by \eqref{gamma} and $b: \R^+\times \R \to L(H, \R)$ is given by $b(t, x)(v) = \varepsilon{\langle\varphi'_{x}, B(t)v \rangle}$ for all $v\in H$;
\item \[
\mu(t, \beta_t,v_t^0) = \sum_{k=1}^3\mu_k(t, \beta_t, v_t^0), \quad \mathrm{where}\quad \mu_k(t, x, v) := \frac{a_k(t, x, v)}{\gamma(x, v)^k},
\]
for  $x\in\R$, $v\in H$ and $k\in\{1, 2, 3\}$, where $a_k: \R^+\times\R\times H \to \R$ are functions given by
\begin{equation}
\label{a}
\begin{split}
a_1(t, x, v)&:= \langle v, A^*\varphi'_{x} \rangle + \langle f(v + \varphi_0) - f(\varphi_{0}), \varphi'_{x} \rangle \\
a_2(t, x, v) &:= \varepsilon^2 \langle B(t)QB^*(t)\varphi_{x}', \varphi_{x}'' \rangle \\
a_3(t, x, v) &:= \frac{\varepsilon^2}{2}\langle v + \varphi_0 - \varphi_{x}, \varphi_{x}''' \rangle \langle B(t)QB^*(t)\varphi_{x}', \varphi_{x}'\rangle. \\
\end{split}
\end{equation}
\end{itemize}

Formally, the SDE \eqref{beta} can be obtained by It\^o's formula and a comparison of coefficients: if one assumes that $(\beta_t)_{t\geq0}$ satisfies an SDE driven by $(W_t^Q)_{t\geq0}$ with drift and diffusion coefficients to be determined, then by formally applying It\^o's formula, one can write down an SDE for $(\langle u_t - \varphi_{\beta_t},\varphi'_{\beta_t}\rangle)_{g\geq0}$.  Setting the result to zero (since we want a solution to \eqref{zero first deriv}) and comparing coefficients leads to \eqref{beta}.  

However, since we cannot find any (infinite-dimensional) It\^o-type lemma that directly applies to our situation, we take care in Proposition \ref{zero} below to rigorously prove the result.  In any case, we start with the following existence and uniqueness result. 
\begin{prop}
\label{existence}
Let $\tau$ be a stopping time with respect to the filtration $\{\mathcal{F}_t\}_{t\geq0}$.  For any $\mathcal{F}_{\tau}$-measurable random variable $\beta_{\tau}$ such that $\gamma(\beta_{\tau}, v^0_{\tau}) >0$ almost surely (where $\gamma$ is defined in \eqref{gamma}) and $\E(\beta_{\tau}^2)<\infty$, there exists a unique continuous solution $(\beta_t)_{t\in [\tau, \tau_\infty)}$ to the SDE \eqref{beta}, with initial condition $\beta_{\tau}$ at $\tau$, up until the stopping time $\tau_\infty = \lim_{n\to\infty} \tau_n > \tau$, where
\[
\tau_n := \inf\{t\geq \tau: \gamma(\beta_t, v^0_t) = 1/n\}.
\]
In other words
\[
\beta_{t\wedge \tau_n} = \beta_{\tau} + \sum_{k=1}^3\int_{\tau}^{t\wedge\tau_n} \mu_k(s, \beta_s, v^0_s)ds + \int_{\tau}^{t\wedge\tau_n}\sigma(s, \beta_s, v^0_s)dW^Q_s, \quad t\geq \tau,\ n\geq1.
\]

\end{prop}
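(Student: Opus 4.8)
The plan is to treat \eqref{beta} as a one-dimensional SDE with random, time-dependent coefficients whose only singularity is the vanishing of $\gamma$ in the denominators, and to build the solution by a \emph{double localization}: one stopping time keeps $\gamma(\beta_t,v^0_t)$ bounded away from $0$, a second keeps $\|v^0_t\|$ bounded. Throughout, the process $(v^0_t)_{t\ge0}$ of Proposition~\ref{K-S} is regarded as exogenous; it is adapted, has $H$-continuous paths (Da~Prato--Zabczyk \cite{D-Z}: the stochastic convolution is continuous because $B$ is bounded and $\mathrm{Tr}(Q)<\infty$), and satisfies $\E\sup_{s\le T}\|v^0_s\|^2<\infty$ for every $T$. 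The SDE is to be understood as started from the $\cF_\tau$-measurable datum $\beta_\tau$ at the stopping time $\tau$, with the clock shifted accordingly; the stochastic integral $\int\sigma\,dW^Q$ is a real-valued local martingale since $\sigma(t,x,v)\in L(H,\R)$ with $\int_0^t\|\sigma(s,\cdot)Q^{1/2}\|_{\mathrm{HS}}^2\,ds<\infty$.

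\textbf{Regularity of the coefficients.} First I would record, using Assumptions~\ref{assump varphi}(ii)--(iii) (the maps $\alpha\mapsto\varphi'_\alpha,\varphi''_\alpha,\varphi'''_\alpha,A^*\varphi'_\alpha$ are globally Lipschitz with $\alpha$-independent norms), \ref{assump f}(ii)--(iii) (so $v\mapsto f(v+\varphi_0)-f(\varphi_0)$ is globally Lipschitz on $H$ with linear growth in $\|v\|$), and \ref{assump noise} ($B$ bounded and continuous, $B(t)QB^*(t)$ bounded), that $b$ and each $a_k$ are locally Lipschitz in $x$ and Lipschitz in $v$, and are bounded or at most linearly growing in $(x,\|v\|)$: indeed $\|b(t,x)\|_{L(H,\R)}\le\varepsilon C\|\varphi'_0\|$, $a_1,a_2$ are bounded in $x$ (with linear growth in $\|v\|$), and the only genuine $x$-growth sits in the factor $\langle v+\varphi_0-\varphi_x,\varphi'''_x\rangle$ of $a_3$, controlled by $\|\varphi_x-\varphi_0\|\le|x|\,\|\varphi'_0\|$. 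Set $T_m:=\inf\{t\ge\tau:\|v^0_t\|\ge m\}\wedge m$ and, for $n\ge1$, replace $\gamma$ by $\gamma\vee(1/n)$ in \eqref{sigma}--\eqref{a}, obtaining coefficients $\mu^{(n)},\sigma^{(n)}$. On $[\tau,T_m]$ these are progressively measurable, locally Lipschitz in $x$ with linear growth, with constants depending only on $n,m$ ($\sigma^{(n)}$ is in fact globally Lipschitz and bounded). By the standard existence and uniqueness theorem for one-dimensional SDEs with progressively measurable, locally Lipschitz, linearly growing coefficients (linear growth ruling out explosion), together with $\E(\beta_\tau^2)<\infty$, there is a unique continuous solution $\beta^{(n,m)}$ on $[\tau,T_m]$ with initial value $\beta_\tau$.

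\textbf{Patching and removal of the localizations.} Along such a solution put $\tau_n:=\inf\{t\ge\tau:\gamma(\beta_t,v^0_t)=1/n\}$. On $[\tau,\tau_n\wedge T_m)$ one has $\gamma\vee(1/n)=\gamma$, so $\beta^{(n,m)}$ solves the genuine SDE \eqref{beta}; by the local uniqueness just established the family $\{\beta^{(n,m)}\}$ is consistent on overlapping intervals, defining a continuous process $\beta$ on $[\tau,\tau_\infty\wedge\sup_mT_m)$ with $\tau_\infty:=\lim_n\tau_n$. Since $T_m\uparrow\infty$ almost surely, $\beta$ is defined on all of $[\tau,\tau_\infty)$ and satisfies the displayed identity with $t\wedge\tau_n$. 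To see $\tau_\infty>\tau$ a.s., note $t\mapsto\gamma(\beta_t,v^0_t)$ is continuous and, by hypothesis, strictly positive at $t=\tau$, so on a full-measure set one can measurably choose $n$ with $1/n<\gamma(\beta_\tau,v^0_\tau)$, whence $\tau_n>\tau$ by continuity and $\tau_\infty\ge\tau_n>\tau$. Uniqueness among continuous solutions of \eqref{beta} on $[\tau,\tau_\infty)$ follows by the usual localization: two such solutions agree on each $[\tau,\tau_n\wedge T_m]$ by the local Lipschitz uniqueness, hence on $[\tau,\tau_\infty)$.

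\textbf{Main obstacle.} The difficulty is organizational rather than deep: one must carefully isolate the $x$- and $v$-regularity of the $a_k/\gamma^k$ terms (in particular the single linearly growing piece of $a_3$) so as to invoke the \emph{random-coefficient} version of the SDE existence theorem, verifying progressive measurability of $(t,\omega,x)\mapsto\mu^{(n)},\sigma^{(n)}$ through $v^0$; and one must arrange the double localization so that the limits $n\to\infty$ and $m\to\infty$ decouple, which they do precisely because $\sup_mT_m=\infty$ a.s. holds independently of $n$.
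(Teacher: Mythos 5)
Your proof is correct and follows essentially the same strategy as the paper's: truncate the singular denominator $\gamma$ at level $1/n$, localize additionally in $\|v^0_t\|$ (your $T_m$ is the paper's $\rho_R$), solve the resulting Lipschitz SDE within the Da Prato--Zabczyk framework, patch the solutions together along $\tau_n$, and then remove the localizations, with uniqueness following by the usual stopping argument. The only small difference is that you correctly note $a_3$ produces only local Lipschitz continuity with linear growth in $x$ (via $\|\varphi_x-\varphi_0\|\le|x|\|\varphi'_0\|$), whereas the paper asserts global Lipschitz continuity and only verifies it explicitly for $\sigma^n$; this does not affect the conclusion since linear growth rules out explosion.
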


\begin{proof}
The proof follows the fairly standard proof of existence and uniqueness of solutions to SDEs with locally Lipschitz coefficients up until an explosion time (see for example \cite[Theorem 1.18]{Hsu}).  We however recall the key arguments here, since we are in a slightly non-standard set-up.  
We also suppose that $\tau=0$ (the general case is the same).

\vspace{0.2cm}
\noindent\textit{Step 1: Existence.} 
Define for $t\geq0$, $x\in\R$, $v\in H$ such that $\|v\|\leq R$ and $n\geq1$,
\[
\sigma^n(t, x, v) = 
\begin{cases}
\frac{b(t, x)}{\gamma(x, v)} \ \mathrm{if}\ x\ \mathrm{is\ s.t}\ \gamma(x, v) \geq \frac{1}{n}\\
nb(t, x) \ \mathrm{otherwise}
\end{cases},
\]
and similarly
\[
\mu^n_k(t, x, v) = 
\begin{cases}
\frac{a_k(t, x, v)}{\gamma(x, v)^k} \ \mathrm{if}\ x\ \mathrm{is\ s.t}\ \gamma(x, v) \geq \frac{1}{n}\\
n^ka_k(t, x, v) \ \mathrm{otherwise}
\end{cases},
\]
for $k\in\{1, \dots, 3\}$.
Then define, for $n\geq1$, $(\beta^n_t)_{t\geq 0}$ to be the solution to the SDE
\begin{equation}
\label{beta stopped}
d\beta^n_t = \sum_{k=1}^3 \mu^n_k(t, \beta^n_t, v^0_t)dt + \sigma^n(t, \beta^n_t, v^0_t)dW^Q_t, \quad t\geq0,
\end{equation}
with initial condition $\beta^n_0 = \beta_0$.  This is a stochastic differential equation driven by a Hilbert space-valued process that fits into the standard framework of Da Prato and Zabczyk described in \cite{D-Z}.  In particular it has a unique continuous (strong) solution that does not explode up until time
\begin{equation}
\label{rho}
\rho_R := \inf\{t>0: \|v^0_t\| >R\},
\end{equation}
for all $R>0$ (note that $\rho_R$ is independent of $\beta^n$ for all $n$).  This follows from standard methods since it can be checked that $\sigma^n(t, \cdot, v)$ and $\mu^n_k(t, \cdot, v)$, $k=1,2,3$ are globally Lipschitz for $v\in H$ such that $\|v\|\leq R$ (independently of $t$), using Assumptions \ref{assump varphi} and \ref{assump f}. 
For example if $x, y\in\R$ are such that  $\gamma(x, v)\geq1/n$, $\gamma(y, v)\geq1/n$ for $v\in H$ such that $\|v\|\leq R$ then
\begin{align*}
&\|\sigma^n(t, x,v) - \sigma^n(t, y, v)\|_{L(H, \R)} = \left\|\frac{b(t, x)\gamma(y, v) - b(t, y)\gamma(x, v)}{\gamma(x, v)\gamma(y, v)}\right\|_{L(H, \R)}\\
&\qquad \leq  n^2 \|b(t, x) - b(t, y)\|_{L(H, \R)}|\gamma(y, v)| + n^2 \|b(t, y)\|_{L(H, \R)}|\gamma(y, v) - \gamma(x, v)|\\
&\qquad \leq  n^2 \varepsilon C  (1 + R)|x - y| 
\end{align*}
where $C$ depends on the Lipschitz constants of $x\mapsto \varphi'_x$ and $x\mapsto \varphi''_x$, as well as $\|\varphi'_0\|$, $\|\varphi''_0\|$ and $\sup_{t\geq0}\|B(t)\|_{L(H, H)}$.
The last inequality follows from the facts that $\|b(t, x) - b(t, y)\|_{L(H,\R)} \leq \varepsilon \|B(t)\|_{L(H, H)}\| \varphi'_x - \varphi'_y\|$, $\|b(t, x)\|_{L(H,\R)} \leq \varepsilon  \|B(t)\|_{L(H, H)}\| \varphi'_0\|$,
$|\gamma(x, v) - \gamma(y, v)| \leq  \| \varphi_0'\| \|\varphi'_{x} - \varphi'_y\| + \| v\| \|\varphi_{x}'' -  \varphi_{y}''\|$
and
$|\gamma(y, v)| \leq  \| \varphi_0'\|^2+ \| v\| \|\varphi_{0}''\|$.
The same holds if $x, y\in\R$ are such that $\gamma(x, v)\geq1/n$ and $\gamma(y, v)<1/n$, or vice-versa, and trivially holds if  $\gamma(x, v)<1/n$ and $\gamma(y, v)<1/n$.

Finally, we have that $\lim_{R\to\infty}\rho_R = \infty$ almost surely thanks to Theorem \ref{K-S}.  Thus there exists a unique continuous solution  $(\beta^n_t)_{t\geq 0}$ to \eqref{beta stopped} for all $t\geq0$.

Now, with $(\beta^n_t)_{t\geq0}$ uniquely defined by \eqref{beta stopped}, we set
\[
\tau_n := \inf\{t\geq0: \gamma(\beta^n_t, v^0_t) = n^{-1},\ \mathrm{or}\ \gamma(\beta^{n+1}_t, v^0_t) = n^{-1}\}.
\]
This makes sense because $t\mapsto \gamma(\beta^n_t, v^0_t)$ is almost surely continuous and by the conditions on $\beta_0$, $\gamma(\beta^n_0, v^0_0)>n^{-1}$ almost surely for some $n$ large enough.  We then have that $\beta^n_t = \beta^{n+1}_t$ for all $t\leq \tau_n$ since
by definition $\sigma^n(t, x,  v^0_t) = \sigma^{n+1}(t, x,  v^0_t)$ for all $x$ such that $\gamma(x,  v^0_t)\geq n^{-1}$.  In other words, $(\beta^n_{t\wedge \tau_n} )_{t\geq0}$ and $(\beta^{n+1}_ {\tau_n})_{t\geq0}$ are solutions to the same equation. 
Moreover, $\tau_{n}$ is the first time that $\gamma(\beta^n_{t\wedge \tau_n}, v^0_t) = \gamma(\beta^{n+1}_{t\wedge \tau_n}, v^0_t)\leq1/n$. In particular $\tau_n \leq \tau_{n+1}$.  

Let $\tau_\infty = \lim_{n\to\infty}\tau_n$.  Define $\beta_t := \beta^n_t, \quad \forall t\in[0, \tau_n)$.
Then $\tau_n$ is the first time $\gamma(\beta_t, v^0_t) \leq1/n$. Finally since
\[
\beta^n_t = \beta_0 + \sum_{k=1}^3\int_0^t \mu^n_k(s, \beta^n_s, v^0_s)ds + \int_0^t\sigma^n(s, \beta^n_s, v^0_s)dW^Q_s,
\]
together with the facts that $\beta_{t\wedge\tau_n} = \beta^n_t$, $\sigma^n(s, \beta_s^n, v^0_s) = b(s, \beta_s)/\gamma(\beta_s, v^0_s)$, and $\mu^n_k(s, \beta_s^n, v^0_s) = a_k(s, \beta_s, v^0_s)/\gamma(\beta_s, v^0_s)^k$ for all $s\leq \tau_n$ and $k\in\{1, \dots, 3\}$ we have that
\[
\beta_{t\wedge \tau_n} = \beta_0 + \sum_{k=1}^3\int_0^{t\wedge\tau_n} \mu_k(s, \beta_s, v^0_s)ds + \int_0^{t\wedge\tau_n}\sigma(s, \beta_s, v^0_s)dW^Q_s,
\]
for all $t\geq0, n\geq1$.  In other words $(\beta_t)_{t\geq0}$ is a solution to \eqref{beta} up until time $\tau_\infty$.

\vspace{0.2cm}
\noindent\textit{Step 2: Uniqueness.} 
Suppose that $(\tilde{\beta}_t)_{t\geq0}$ is another continuous solution to \eqref{beta} with initial condition $\beta_0$ up until time $\tilde{\tau}_\infty$.  Let $\rho_n$ be the first time that either $\gamma(\beta_t, v_t^0)$ or $\gamma(\tilde{\beta_t}, v_t^0)$ is equal to $1/n$ (again for $n$ large enough so that $\gamma({\beta_0}, v_0^0) > n^{-1}$).  Then $({\beta}_{t\wedge\rho_n})_{t\geq0}$ and $(\tilde{\beta}_{t\wedge\rho_n})_{t\geq0}$ are solutions to the equation \eqref{beta stopped}, so that by uniqueness of solutions to this equation, ${\beta}_{t} = \tilde{\beta}_t$ for all $t\leq \rho_n$, and $\rho_n$ is the first time that $\gamma(\beta_{t\wedge\rho_n}, v^0_t) =\gamma(\tilde{\beta}_{t\wedge\rho_n}, v^0_t) \leq 1/n$.  Hence $\tilde{\tau}_\infty = \tau_\infty = \lim_{n\to\infty}\rho_n$ and $\tilde{\beta}_t = \beta_t$ for all $t\in[0, \tau_\infty)$.
\end{proof}

\begin{prop}
\label{zero}
Let $\tau$ be a stopping time with respect to the filtration $\{\mathcal{F}_t\}_{t\geq0}$.  Let $\beta_{\tau}$ be an $\mathcal{F}_{\tau}$-measurable random variable such that $\gamma(\beta_{\tau}, v^0_{\tau}) >0$ almost surely, $\E(\beta_{\tau}^2)<\infty$ and 
\[
\langle u_\tau -  \varphi_{\beta_\tau},  \varphi_{\beta_\tau}'\rangle =0.
\] 
Then the solution  $(\beta_t)_{t\in[\tau, \tau_\infty)}$ to the SDE \eqref{beta}, as defined in Proposition \ref{existence} is such that
\[
\langle u_t- \varphi_{\beta_t}, \varphi'_{\beta_t}\rangle = 0, \quad \forall t\in[\tau, \tau_\infty).
\]
\end{prop}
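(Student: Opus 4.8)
The plan is to show that the scalar process $Y_t := \langle u_t - \varphi_{\beta_t}, \varphi'_{\beta_t}\rangle$ satisfies $dY_t = 0$ on the stochastic interval $[\tau, \tau_\infty)$; since $Y_\tau = 0$ by hypothesis this forces $Y_t = 0$ throughout, which is the assertion. It suffices to prove $dY_t = 0$ on each $[\tau, \tau_n)$, with $\tau_n$ as in Proposition \ref{existence}, where $\gamma(\beta_t, v^0_t) \geq 1/n$ is bounded below so that all the coefficients $\sigma$, $\mu_k$ appearing in \eqref{beta} are well defined; a further localization at $\rho_R$ from \eqref{rho} keeps every integrand bounded and is removed at the end using path continuity of $v^0$ and its finite moments.

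To differentiate $Y_t$ I would use the decomposition $u_t = \varphi_0 + v^0_t$ from Proposition \ref{K-S} and split
\[
Y_t = \langle v^0_t, \varphi'_{\beta_t}\rangle + G(\beta_t), \qquad G(x) := \langle \varphi_0 - \varphi_x, \varphi'_x\rangle .
\]
The term $G(\beta_t)$ is a $C^2$ function (Assumption \ref{assump varphi}(i)) of the real It\^o process $\beta_t$ solving \eqref{beta}, so the ordinary It\^o formula applies directly; using $\langle \varphi'_x, \varphi''_x\rangle = 0$ and $\langle \varphi_x, \varphi''_x\rangle = -\|\varphi'_0\|^2$ (both from Assumption \ref{assump varphi}(ii) and translation invariance) one gets $G'(x) = \langle \varphi_0, \varphi''_x\rangle$ and $G''(x) = \langle\varphi_0, \varphi'''_x\rangle$. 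The delicate term is $\langle v^0_t, \varphi'_{\beta_t}\rangle$: here $v^0_t$ is only a mild solution, but $t \mapsto \varphi'_{\beta_t}$ is a continuous $H$-valued semimartingale, $d\varphi'_{\beta_t} = \varphi''_{\beta_t}\, d\beta_t + \tfrac12 \varphi'''_{\beta_t}\, d\langle \beta\rangle_t$, taking values in $\mathcal{D}(A^*)$ for every $t$ with $t\mapsto A^*\varphi'_{\beta_t}$ continuous, by Assumption \ref{assump varphi}(iii). Against such a regular test process the mild solution obeys a product (It\^o) rule
\[
\begin{aligned}
d\langle v^0_t, \varphi'_{\beta_t}\rangle &= \big[\langle v^0_t, A^*\varphi'_{\beta_t}\rangle + \langle f(u_t) - f(\varphi_0), \varphi'_{\beta_t}\rangle\big]\,dt + \varepsilon\langle \varphi'_{\beta_t}, B(t)\,dW^Q_t\rangle \\
&\quad + \langle v^0_t, d\varphi'_{\beta_t}\rangle + d[\,v^0,\ \varphi'_{\beta_\cdot}\,]_t ,
\end{aligned}
\]
where the first bracket equals $a_1(t,\beta_t,v^0_t)$ (the weak-formulation drift of $v^0$ tested against $\varphi'_{\beta_t}$, legitimate because $\varphi'_{\beta_t}\in\mathcal{D}(A^*)$), and $d[\,v^0,\ \varphi'_{\beta_\cdot}\,]_t$ is the covariation of the martingale parts paired through $\langle\cdot,\cdot\rangle$, equal to $\mathrm{Tr}[\eta_t Q\,(\varepsilon B(t))^*]\,dt$ with $\eta_t$ the diffusion coefficient of $\varphi'_{\beta_t}$.

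With these decompositions in hand, $dY_t = 0$ is bookkeeping. For the $dW^Q_t$-terms: the martingale contribution of $\langle v^0_t, d\varphi'_{\beta_t}\rangle + dG(\beta_t)$ along $\beta$ equals $(\langle v^0_t,\varphi''_{\beta_t}\rangle + G'(\beta_t))\,\sigma(t,\beta_t,v^0_t)\,dW^Q_t = \langle u_t,\varphi''_{\beta_t}\rangle\,\sigma(t,\beta_t,v^0_t)\,dW^Q_t = -\gamma(\beta_t, v^0_t)\,\sigma(t,\beta_t,v^0_t)\,dW^Q_t$ by \eqref{gamma}, which by the definition \eqref{sigma} of $\sigma$ is exactly $-\varepsilon\langle\varphi'_{\beta_t},B(t)\,dW^Q_t\rangle$ and cancels the stochastic term coming from $v^0$. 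For the drift: the weak-formulation term gives $a_1(t,\beta_t,v^0_t)\,dt$; the covariation term gives $a_2(t,\beta_t,v^0_t)/\gamma(\beta_t,v^0_t)\,dt$; the $d\langle\beta\rangle_t$-terms of $\langle v^0_t,d\varphi'_{\beta_t}\rangle + dG(\beta_t)$ combine (using $\langle\varphi_x,\varphi'''_x\rangle = 0$, so $\langle v^0_t + \varphi_0,\varphi'''_{\beta_t}\rangle = \langle v^0_t + \varphi_0 - \varphi_{\beta_t},\varphi'''_{\beta_t}\rangle$) to $a_3(t,\beta_t,v^0_t)/\gamma(\beta_t,v^0_t)^2\,dt$; and the $d\beta_t$-drift is $-\gamma(\beta_t,v^0_t)\,\mu(t,\beta_t,v^0_t)\,dt$. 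Summing, the drift of $Y_t$ is $\big[a_1 - \gamma\mu + a_2/\gamma + a_3/\gamma^2\big]\,dt$, which vanishes identically because $\mu = a_1/\gamma + a_2/\gamma^2 + a_3/\gamma^3$ by \eqref{beta}, \eqref{a}. Hence $dY_t = 0$ on $[\tau,\tau_n)$ for every $n$, hence on $[\tau,\tau_\infty)$.

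I expect the only genuine difficulty to be rigorously justifying the product rule above, since no standard infinite-dimensional It\^o formula applies verbatim to a mild solution composed with a randomly varying test function. I would establish it by a Yosida approximation: replace $A$ by the bounded operators $A_\lambda = \lambda A(\lambda - A)^{-1}$ ($\lambda$ large), so that the corresponding processes $v^{0,\lambda}$ are genuine strong solutions and the It\^o product rule of \cite{D-Z} applies without change to $\langle v^{0,\lambda}_t, \varphi'_{\beta_t}\rangle$; then let $\lambda\to\infty$, using $v^{0,\lambda}_t\to v^0_t$ in $H$ (uniformly on compacts in $L^2(\Omega)$) together with the key convergence $A_\lambda^*\varphi'_{\beta_t} = \lambda A^*(\lambda - A^*)^{-1}\varphi'_{\beta_t}\to A^*\varphi'_{\beta_t}$, which holds precisely because $\varphi'_{\beta_t}\in\mathcal{D}(A^*)$. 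Boundedness of all integrands on $[\tau,\tau_n\wedge\rho_R]$, and the at-most-linear growth in $\|v^0\|$ of the $a_k$ and $b$, make this passage to the limit routine; all remaining steps use only Assumptions \ref{assump varphi}, \ref{assump f}, \ref{assump A}(i) and the structure of \eqref{beta}.
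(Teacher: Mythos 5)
Your proposal is correct, and the underlying computations (the splitting $Y_t = \langle v^0_t,\varphi'_{\beta_t}\rangle + G(\beta_t)$, the identities $G'(x)=\langle\varphi_0,\varphi''_x\rangle$, $G''(x)=\langle\varphi_0,\varphi'''_x\rangle$ from differentiating the integration-by-parts assumption, the recognition that the covariation term is $a_2/\gamma$, the $d\langle\beta\rangle$-terms combine to $a_3/\gamma^2$, and the final cancellation $a_1 - \gamma\mu + a_2/\gamma + a_3/\gamma^2 = 0$) all check out. But the route is genuinely different from the paper's. The paper works directly with the single map $\Delta(v,\alpha)=-\langle v+\varphi_0-\varphi_\alpha,\varphi'_\alpha\rangle$, Taylor-expands it over a partition of $[0,t]$ to second order, and then passes to the limit term by term, using the It\^o isometry where needed; this is, in effect, a hand-built proof of exactly the It\^o formula you are trying to invoke, and the authors explicitly say they do this because no off-the-shelf infinite-dimensional It\^o lemma fits. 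You instead factor the difficulty: classical one-dimensional It\^o handles $G(\beta_t)$ outright, and for $\langle v^0_t,\varphi'_{\beta_t}\rangle$ you recast the problem as a product rule between a mild solution and a $\mathcal{D}(A^*)$-valued semimartingale test process, justified by Yosida approximation $A_\lambda = \lambda A(\lambda-A)^{-1}$ and the strong convergence $A^*_\lambda\varphi'_{\beta_t}\to A^*\varphi'_{\beta_t}$. The bookkeeping that follows is the same in both arguments.

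What each approach buys: the paper's Taylor-expansion proof is self-contained and makes explicit exactly which assumptions enter in each term, at the cost of being long and somewhat brute-force. Your decomposition is more modular and conceptually cleaner, cleanly isolating where Assumption \ref{assump varphi}(iii) is actually used (to make the weak formulation of $v^0$ valid against $\varphi'_{\beta_t}$ and to get the key resolvent convergence), and it leans on a standard regularization rather than reproving an It\^o formula; the price is that one must verify the convergences in the Yosida limit, which in turn requires the localization you correctly set up at $\tau_n\wedge\rho_R$. One detail worth stating explicitly if you write this up: in the Yosida step you regularize only the $v^0$-equation while keeping $\beta$ as the solution of \eqref{beta} driven by the unregularized $v^0$; this is fine because $\beta$ enters the product rule only as a given semimartingale, and the covariation $[v^{0,\lambda},\varphi'_{\beta_\cdot}]$ is $\lambda$-independent since the martingale part of $v^{0,\lambda}$ is $\varepsilon\int B(s)\,dW^Q_s$ for every $\lambda$.
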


\begin{proof}
The proof is a rather standard adaptation of \cite[Theorem 4.17]{D-Z}, and therefore we have not included every detail. 

Without loss of generality we may assume that $\tau=0$.  Let $\tau_n$ be as in Proposition \ref{existence}, so that $\tau_n\uparrow \tau_\infty$.  Define
\[
\xi_n := \inf\left\{ t\in[0, \tau_n]: \norm{\int_0^t B(s)dW^Q_s}\geq n\ \text{ or } \ |\beta_t | \geq n\right\}, 
\]
with $\xi_n = \tau_n$ if the set is empty. It may be seen that $(\xi_n)_{n\geq1}$ is nondecreasing, and that $\lim_{n\to\infty}\xi_n = \tau_\infty$ a.s. Define for any $t\geq0$ $\beta^{n}_t = \beta_{t\wedge \xi_n}$ and $v^{0,n}_t = v^0_{t\wedge \xi_n}$ where as above $(v^0_t)_{t\geq0}$ is the $H$-valued solution to the SDE in Proposition \ref{K-S} (with $\alpha=0$). 
Let $\Delta(v,\alpha): H\times \R \to \R := - \langle v+\varphi_0 -  \varphi_{\alpha},  \varphi_{\alpha}'\rangle$ and $\Pi = (t_i)_{i=1}^M$ be a partition of $[0,t]$ for some $t\geq0$. For some family $\lbrace \theta_k\rbrace_{k=1}^{M-1} \subset [0,1]$ to be specified below, set $w_k = \theta_k v^{0,n}_{t_{k}} + (1-\theta_k) v^{0,n}_{t_{k+1}}$ and $\zeta_k = \theta_k \beta^{n}_{t_{k}} + (1-\theta_k) \beta^{n}_{t_{k+1}}$. 
Let $X_k = (v^{0,n}_{t_{k+1}}-v^{0,n}_{t_k},\beta^{n}_{t_{k+1}}-\beta^{n}_{t_k})$. We note that the double Fr\'echet derivative of $\Delta$, evaluated at $(w_k,\zeta_k)$, and in the direction $X_k$  is
\begin{multline}
\label{second frechet deriv}
D^2\Delta(w_k,\zeta_k)\cdot X_k \cdot X_k = -2\langle v^{0,n}_{t_{k+1}}-v^{0,n}_{t_k},\varphi''_{\zeta_k}\rangle (\beta^{n}_{t_{k+1}}-\beta^{n}_{t_k})\\ 
-\langle w_k + \varphi_0 - \varphi_{\zeta_k},\varphi'''_{\zeta_k}\rangle(\beta^{n}_{t_{k+1}}-\beta^{n}_{t_k})^2.
\end{multline}
By Taylor's theorem,
\begin{align}
\label{taylor}
\Delta(v^{0,n}_t,\beta^{n}_t) -\Delta(v^{0}_0,\beta_0) &= \sum_{k=1}^{M-1}\gamma(\beta^{n}_{t_k},v^{0,n}_{t_k}) (\beta^{n}_{t_{k+1}} - \beta^{n}_{t_k})- \langle v^{0,n}_{t_{k+1}} -v^{0,n}_{t_k},\varphi'_{\beta^{n}_{t_{k}}}\rangle  \nonumber\\
&\quad + \frac{1}{2}\sum_{k=1}^{M-1} D^2\Delta(w_k,\zeta_k)\cdot X_k \cdot X_k,
\end{align}
for some $\lbrace \theta_k\rbrace_{k=1}^{M-1} \subset [0,1]$. As the partition $\Pi\to0$, we find thanks to Proposition \ref{K-S} that 
\begin{multline*}
\sum_{k=1}^{M-1} \langle v^{0,n}_{t_{k+1}} - v^{0,n}_{t_k},\varphi'_{\beta^{n}_{t_{k}}}\rangle \\
\to \int_{0}^{\xi_n\wedge t}[\langle A^*\varphi_{\beta_{s}}',v^{0}_s\rangle + \langle f(v^{0}_s+\varphi_0) - f(\varphi_0),\varphi'_{\beta_{s}}\rangle ]ds +\varepsilon \int_{0}^{\xi_n\wedge t}\langle\varphi'_{\beta_{s}},B(s)dW^Q_s\rangle. 
\end{multline*}
Similarly, thanks to \eqref{beta},
\begin{multline*}
\sum_{k=1}^{M-1}\gamma(\beta^{n}_{t_k},v^{0,n}_{t_k})(\beta^{n}_{t_{k+1}} - \beta^{n}_{t_k})\to \varepsilon \int_{0}^{t\wedge\xi_n}\langle\varphi'_{\beta_{s}},B(s)dW^Q_s\rangle + \sum_{l=1}^3\int_{0}^{t\wedge\xi_n}\frac{a_l(s, \beta_s, v^0_s)}{\gamma(\beta_s, v_s^0)^{l-1}}ds,
\end{multline*}
as $\Pi\to0$.
We then have to deal with the second order terms in the Taylor expansion \eqref{taylor}.  According to \eqref{second frechet deriv}, there remain two terms on the right-hand side of \eqref{taylor} to handle:
\begin{align*}
-\sum_{k=1}^{M-1} \langle v^{0,n}_{t_{k+1}}-v^{0,n}_{t_k},\varphi''_{\zeta_k}\rangle (\beta^{n}_{t_{k+1}}-\beta^{n}_{t_k})\ \textrm{and}\ -\frac{1}{2}\sum_{k=1}^{M-1}\langle w_k + \varphi_0 - \varphi_{\zeta_k},\varphi'''_{\zeta_k}\rangle(\beta^{n}_{t_{k+1}}-\beta^{n}_{t_k})^2.
\end{align*}
For the first of these terms, by again using Proposition \ref{K-S} and \eqref{beta} it is standard to show that
\begin{align*}
&\lim_{\Pi\to 0}\sum_{k=1}^{M-1} \langle v^{0,n}_{t_{k+1}}-v^{0,n}_{t_k},\varphi''_{\zeta_k}\rangle (\beta^{n}_{t_{k+1}}-\beta^{n}_{t_k}) \\
&=\varepsilon^2\lim_{\Pi\to 0}\sum_{k=1}^{M-1}\int_{t_k\wedge\xi_n}^{t_{k+1}\wedge\xi_n}\langle\varphi''_{\beta^n_{t_k}},B(s)dW^Q_s\rangle  \int_{t_k\wedge\xi_n}^{t_{k+1}\wedge\xi_n}\frac{\langle\varphi'_{\beta_{s}},B(s)dW^Q_s\rangle}{\gamma(\beta_s, v^0_s)} =:\varepsilon^2\lim_{\Pi\to 0}J(\Pi).
\end{align*}
almost surely.  In order to find this limit, following the standard method to prove It\^o's lemma (see \cite[Theorem 4.17]{D-Z} or \cite[Theorem 3.3.3]{K-S}) and using the infinite dimensional It\^o isometry (see \cite[Theorem 4.12]{D-Z})we have
\[
\E\left[\left(J(\Pi)-\int_{0}^{t\wedge\xi_n}\frac{\langle\varphi''_{\beta_s},B(s)QB^*(s)\varphi'_{\beta_s}\rangle}{\gamma(\beta_s,v^{0}_s)}ds\right)^2\right] \to 0
\]
as $\Pi \to 0$. We establish through an analogous argument that
\begin{align*}
&\lim_{\Pi\to 0}\sum_{k=1}^{M-1} \langle w_k + \varphi_0 - \varphi_{\zeta_k},\varphi'''_{\zeta_k}\rangle(\beta^{n}_{t_{k+1}}-\beta^{n}_{t_k})^2 \\
&= \varepsilon^2\int_0^{t\wedge \xi_n}\frac{\langle v^0_s + \varphi_0 - \varphi_{\beta_s} , \varphi_{\beta_s}''' \rangle}{\gamma(\beta_s,v^0_s)^2} \langle B(s)QB^*(s)\varphi_{\beta_s}', \varphi_{\beta_s}'\rangle ds,
\end{align*}
almost surely. Finally, by taking the size of the partition $\Pi$ to 0 in \eqref{taylor}, and using the definitions of the functions $a_1, a_2$ and $a_3$ given in \eqref{a}, we see that for any $t\leq \xi_n$, $\Delta(v^{0}_t,\beta_t) =  \Delta(v^{0}_0,\beta_0)$ so that (since $u_t = v^0_t + \varphi_0$)
\[
\langle u_t-  \varphi_{\beta_t},  \varphi_{\beta_t}'\rangle = \langle v^{0}_t+\varphi_0 -  \varphi_{\beta_t},  \varphi_{\beta_t}'\rangle = \langle v^{0}_0+\varphi_0 -  \varphi_{\beta_0},  \varphi_{\beta_0}'\rangle = 0,  \quad t\in[0, \xi_n],
\]
by assumption.  Since this holds for any $n$ and $\xi_n\uparrow \tau_\infty$ we have the result.
\end{proof}

\begin{cor}
\label{non-explosion}
Suppose that we are in the situation of Proposition \ref{zero}, and $(\beta_t)_{t\in[\tau, \tau_\infty)}$ is again the solution to the SDE \eqref{beta} up until time $\tau_\infty$, as defined in Proposition \ref{existence}. Then $\limsup_{t\to\tau_\infty}|\beta_t| <\infty$.  
Moreover, suppose that the probability that
\begin{equation}
\label{eqn ut girsanov}
\langle u_{\tau_\infty} - \varphi_\alpha,\varphi_\alpha'\rangle = 0, \quad \forall \alpha\in I,
\end{equation}
for any interval $I\subset \R$ with nonempty interior is zero. Then $\lim_{t\to\tau_\infty}\beta_t$ exists almost surely.  
\end{cor}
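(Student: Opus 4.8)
The plan is to prove the two assertions separately, working throughout on the event $\{\tau_\infty<\infty\}$, which is the relevant regime. Two ingredients will be used repeatedly. First, Proposition \ref{zero} gives $\langle u_t-\varphi_{\beta_t},\varphi'_{\beta_t}\rangle=0$ on $[\tau,\tau_\infty)$, which, since $u_t=v^0_t+\varphi_0$, rearranges to
\[
\langle v^0_t,\varphi'_{\beta_t}\rangle=\langle\varphi_{\beta_t}-\varphi_0,\varphi'_{\beta_t}\rangle,\qquad t\in[\tau,\tau_\infty).
\]
Second, by Proposition \ref{K-S} the process $(v^0_t)_{t\ge0}$ has continuous $H$-valued paths, so on $\{\tau_\infty<\infty\}$ we have $v^0_t\to v^0_{\tau_\infty}$ strongly in $H$ as $t\uparrow\tau_\infty$.

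For the non-explosion claim I would argue by contradiction. If with positive probability $\limsup_{t\uparrow\tau_\infty}|\beta_t|=\infty$, choose $t_k\uparrow\tau_\infty$ with $|\beta_{t_k}|\to\infty$; since $\|\varphi'_\alpha\|$ is bounded (Assumption \ref{assump varphi}(ii)), after passing to a subsequence $\varphi'_{\beta_{t_k}}\rightharpoonup\psi$ weakly in $H$. The point is that $\psi=0$: each $\varphi'_{\beta_{t_k}}$ lies in the closed, hence weakly closed, subspace $V:=\overline{\mathrm{span}}\{\varphi'_\gamma:\gamma\in\R\}$, so $\psi\in V$; and Assumption \ref{assump varphi}(iv) gives $\langle\varphi'_\gamma,\varphi'_{\beta_{t_k}}\rangle\to0$ for every fixed $\gamma$, so $\langle\varphi'_\gamma,\psi\rangle=0$ and $\psi\perp V$; hence $\psi=0$. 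Passing to the limit in the displayed identity, the left-hand side tends to $\langle v^0_{\tau_\infty},\psi\rangle=0$ by strong--weak convergence, while in case (a) of Assumption \ref{assump varphi}(iv) the right-hand side satisfies $|\langle\varphi_{\beta_{t_k}}-\varphi_0,\varphi'_{\beta_{t_k}}\rangle|\to\lim_{|\alpha|\to\infty}|\langle\varphi'_\alpha,\varphi_\alpha-\varphi_0\rangle|>0$, a contradiction. (In case (b) one argues along the same lines, now using that $\varphi_\alpha\rightharpoonup0$ and $\|\varphi_\alpha\|$ is constant in $\alpha$ to identify the limiting value of $m(t,\beta_t)=\|u_t-\varphi_{\beta_t}\|^2$ along $t_k$ and reach a contradiction.) Thus $\limsup_{t\uparrow\tau_\infty}|\beta_t|<\infty$ a.s., and in particular $\ell^-:=\liminf_{t\uparrow\tau_\infty}\beta_t$ and $\ell^+:=\limsup_{t\uparrow\tau_\infty}\beta_t$ are a.s. finite.

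For the existence of the limit, assume the non-degeneracy hypothesis and suppose $\P(\ell^-<\ell^+)>0$. Since $\{\ell^-<\ell^+\}=\bigcup_{p<q,\;p,q\in\mathbb{Q}}\{\ell^-<p<q<\ell^+\}$, it is enough to show each event $\{\ell^-<p<q<\ell^+\}$ is null. On that event, continuity of $t\mapsto\beta_t$ on $[\tau,\tau_\infty)$ together with the intermediate value theorem forces $\beta$ to attain each $\alpha\in(p,q)$ along some $s_j\uparrow\tau_\infty$; at those times $\langle u_{s_j}-\varphi_\alpha,\varphi'_\alpha\rangle=0$ by Proposition \ref{zero}, and letting $j\to\infty$ with $u_{s_j}-\varphi_0\to u_{\tau_\infty}-\varphi_0$ strongly in $H$ yields $\langle u_{\tau_\infty}-\varphi_\alpha,\varphi'_\alpha\rangle=0$ for all $\alpha\in(p,q)$. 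The hypothesis, applied with $I=(p,q)$, says this event has probability zero. Hence $\ell^-=\ell^+$ a.s., and since it is finite, $\lim_{t\uparrow\tau_\infty}\beta_t$ exists (and is finite) a.s.

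I expect the non-explosion step to be the main obstacle: it is the place where Assumption \ref{assump varphi}(iv) is genuinely used, and the weak-compactness argument, which closes cleanly in case (a) through the non-vanishing of $\langle\varphi'_\alpha,\varphi_\alpha-\varphi_0\rangle$, must be re-run more carefully in case (b) via the asymptotics of $\|u_t-\varphi_\alpha\|$. Some care is also needed in restricting to $\{\tau_\infty<\infty\}$, and, in the second step, in the measurability bookkeeping, which the rational sandwiching handles; the passage to the limit $t\uparrow\tau_\infty$ inside the inner products rests entirely on the path-continuity of $v^0$.
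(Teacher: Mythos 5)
Your proof is correct, and it takes a genuinely different route from the paper's in both halves. For the non-explosion step, the paper runs a Gram--Schmidt construction on the vectors $\varphi'_{\beta_{\xi^k}}$: using the near-orthogonality from Assumption~\ref{assump varphi}(iv), it builds an almost-orthonormal system $(e_k)$ with $\langle v^0_{\tau_\infty},e_k\rangle$ bounded below by a fixed $\kappa/2$, and then Bessel's inequality forces $\|v^0_{\tau_\infty}\|=\infty$. You instead extract a weak limit $\psi$ of the bounded sequence $\varphi'_{\beta_{t_k}}$, show $\psi=0$ by combining $\psi\in V:=\overline{\mathrm{span}}\{\varphi'_\gamma\}$ with $\psi\perp V$, and then contradict the non-vanishing of $\lim_{|\alpha|\to\infty}|\langle\varphi'_\alpha,\varphi_\alpha-\varphi_0\rangle|$ after passing to the strong--weak limit in the orthogonality identity. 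Both arguments lean on the same two ingredients of Assumption~\ref{assump varphi}(iv), but yours is shorter and avoids the explicit $\varepsilon$-bookkeeping (the discrete Gronwall bound on $\|e_k-\varphi'_{\beta_{\xi^k}}\|$) that the paper needs to control the Gram--Schmidt corrections. One small remark: your parenthetical handling of case~(b) is unnecessary --- the condition $\lim_{|\alpha|\to\infty}|\langle\varphi'_\alpha,\varphi_\alpha-\varphi_0\rangle|>0$ is part of Assumption~\ref{assump varphi}(iv) independently of whether (a) or (b) holds, so your case-(a) argument already covers both.

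For the existence of the limit, the paper fixes $\underline{\alpha}<\bar{\alpha}$, sets up up/down crossing times, introduces the quantity $s_n=\sup_\theta|\langle u_{\xi^n}-\varphi_\theta,\varphi'_\theta\rangle|$, and splits into the cases $s_n\not\to0$ (ruled out by a ``no infinite oscillations in finite time'' contradiction) and $s_n\to0$ (which leads to the degeneracy event~\eqref{eqn ut girsanov}). Your version is tighter: you sandwich with rationals, use the intermediate value theorem to find, for each $\alpha\in(p,q)$, times $s_j\uparrow\tau_\infty$ with $\beta_{s_j}=\alpha$, invoke Proposition~\ref{zero} at those times, and pass to the limit via path-continuity of $v^0$ to land directly in the degeneracy event; the rational union then finishes it without any case split. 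The two proofs ultimately hinge on the same mechanism --- $\beta_t$ must sweep through every intermediate level, and at each sweep $\varphi'_\alpha$ is orthogonal to $u_t-\varphi_\alpha$ --- but you reach the degeneracy event in one step rather than two.

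<br>

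The only point worth making explicit in a write-up is the restriction to $\{\tau_\infty<\infty\}$, which both you and the paper implicitly require (so that $v^0_{\tau_\infty}$ is defined and $v^0_{t_k}\to v^0_{\tau_\infty}$ strongly); you flag this at the outset, which is good practice.
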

\begin{rem}
The assumption \eqref{eqn ut girsanov} in the above Corollary ensures that, with probability 1, the function $\alpha\mapsto m(\tau_\infty, \alpha) = \|u_{\tau_\infty} - \varphi_{\alpha}\|^2$ is not `flat' over a nonempty interval.  If this function did become flat at $\tau_\infty$, it is natural that $\lim_{t\to\tau_\infty}\beta_t$ would be undefined.

We expect that in most applications, it is impossible that there exists an interval $I\subset\R$ with nonempty interior such that 
\[
\langle u - \varphi_\alpha,\varphi_\alpha'\rangle = 0, \quad \forall \alpha\in I,
\]
whenever $u\in E$. For example, by differentiating with respect to $\alpha$ an arbitrary number of times and assuming smoothness, this would be impossible if \linebreak$u\in \bar{\mathrm{sp}}\{\varphi_\alpha, \varphi'_{\alpha}, \varphi''_{\alpha}, \dots, \alpha\in I\}$. 
\end{rem}

\begin{proof}[Proof of Corollary \ref{non-explosion}] 
 Without loss of generality, suppose $\tau=0$.  We firstly prove that almost surely $\limsup_{t\to\tau_{\infty}} |\beta_t|<\infty$. Assume for a contradiction that for a set of paths of nonzero measure, $\limsup_{t\to\tau_{\infty}} |\beta_t|=\infty$. Then, thanks to Assumption  \ref{assump varphi} (iv), and the continuity of $t\mapsto u_t$ for all $t\geq0$, for any $\varepsilon\geq0$ we can find a sequence of times  $(\xi^k)_{k\geq1}$ such that 
 \begin{itemize}
 \item$\xi^k\uparrow\tau_\infty$ as $k\to\infty$,
\item $\|v^0_{\tau_\infty} - v^{0}_{\xi^k}\| \leq \frac{\varepsilon}{k}$ for all $k\geq1$,
\item $\langle\varphi'_{\beta_{\xi^{k}}},\varphi'_{\beta_{\xi^j}}\rangle | \leq \frac{\varepsilon}{k}$ for all $j \in\{1, \dots, k-1\}$, $k\geq1$,
\item $ |\langle \varphi'_{\beta_{\xi^{k}}},\varphi_{\beta_{\xi^{k}}}- \varphi_{0}\rangle| \geq \kappa$ for all $k\geq1$ and some $\kappa>0$.
\end{itemize}
Now define $(e_k)_{k=1}^M$ to be part of an orthogonal basis for $H$ for some $M\geq1$ to be chosen later, defined through the Gram-Schmidt procedure and based on the functions $(\varphi'_{\beta_{\xi^k}})_{k=1}^M$. 
That is, $e_1 =\varphi'_{\beta_{\xi^1}}$ and
\[
e_k =\varphi'_{\beta_{\xi^k}} - \sum_{j=1}^{k-1}\frac{\langle {e}_j, \varphi'_{\beta_{\xi^k}}\rangle}{\norm{{e}_j}^2}{e}_j, \quad \mathrm{so\ that}\quad \norm{v^0_{\tau_\infty}}^2 \geq \sum_{k=1}^M\frac{\langle v^0_{\tau_\infty}, e_k\rangle^2}{\|e_k\|^2}.
\]
Now
\begin{align*}
\langle v^0_{\tau_\infty}, e_k\rangle 
&= \langle v^0_{\tau_\infty}, e_k - \varphi'_{\beta_{\xi^k}}\rangle + \langle v^0_{\tau_\infty} -v^0_{\xi^k},\varphi'_{\beta_{\xi^k}}\rangle +\langle v^0_{\xi^k} - \varphi_{\beta_{\xi^k}} +  \varphi_{\beta_{\xi^k}},\varphi'_{\beta_{\xi^k}}\rangle\\
&= \langle v^0_{\tau_\infty}, e_k - \varphi'_{\beta_{\xi^k}}\rangle + \langle v^0_{\tau_\infty} -v^0_{\xi^k},\varphi'_{\beta_{\xi^k}}\rangle +\langle u_{\xi^k} - \varphi_0 - \varphi_{\beta_{\xi^k}} +  \varphi_{\beta_{\xi^k}},\varphi'_{\beta_{\xi^k}}\rangle \\
&= \langle v^0_{\tau_\infty}, e_k - \varphi'_{\beta_{\xi^k}}\rangle + \langle v^0_{\tau_\infty} -v^0_{\xi^k},\varphi'_{\beta_{\xi^k}}\rangle  + \langle \varphi_{\beta_{\xi^k}} -\varphi_0, \varphi'_{\beta_{\xi^k}} \rangle
\end{align*}
since $\langle u_{\xi^k}  - \varphi_{\beta_{\xi^k}},\varphi'_{\beta_{\xi^k}}\rangle =0$.  Therefore
\begin{align}
\label{vtuainfty}
\langle v^0_{\tau_\infty}, e_k\rangle \geq |\langle \varphi'_{\beta_{\xi^k}} , \varphi_{\beta_{\xi^k}} -\varphi_0\rangle|- \|v^0_{\tau_\infty}\|\|e_k - \varphi'_{\beta_{\xi^k}}\| - \|v^0_{\tau_\infty} -v^0_{\xi^k}\|\|\varphi'_0\|.
\end{align}
Now, by definition of $e_k$, for $k\in\{1, \dots, M\}$,
\begin{multline*}
\norm{e_k - \varphi'_{\beta_{\xi^k}}}^2 = \sum_{j=1}^{k-1}\langle\varphi'_{\beta_{\xi^k}}, e_j \rangle^2
\leq  2\sum_{j=1}^{k-1}\langle\varphi'_{\beta_{\xi^k}},\varphi'_{\beta_{\xi^j}}\rangle^2 + 2\|\varphi'_0\|^2\sum_{j=1}^{k-1} \|e_j - \varphi'_{\beta_{\xi^j}}\|^2\\
\leq  2\varepsilon^2\sum_{j=1}^{\infty}\frac{1}{k^2} + 2\|\varphi'_0\|^2\sum_{j=1}^{k-1} \|e_j - \varphi'_{\beta_{\xi^j}}\|^2 = \varepsilon^2 C + 2\|\varphi'_0\|^2\sum_{j=1}^{k-1} \|e_j - \varphi'_{\beta_{\xi^j}}\|^2,
\end{multline*}
where we have used our choice of $\xi^k$, and $C = 2\sum_{j=1}^{\infty}\frac{1}{k^2} $.  By the discrete Gronwall inequality,
this yields 
\begin{align*}
\norm{e_k - \varphi'_{\beta_{\xi^k}}}^2 &\leq \varepsilon^2 C + \sum_{j=1}^{k-1} 2\varepsilon^2 C \|\varphi'_0\|^2\exp\left(\sum_{i=j+1}^{k-1} 2\|\varphi'_0\|^2\right)\leq  \varepsilon^2 CMe^{CM},
\end{align*}
for a new constant $C$ depending on $\|\varphi'_0\|$ but independent of $\varepsilon$ and $M$.  Returning now to 
\eqref{vtuainfty}, we see that agin thanks to our choice of $\xi^k$,
\[
\langle v^0_{\tau_\infty}, e_k\rangle \geq \kappa- \|v^0_{\tau_\infty}\|\varepsilon CMe^{CM} - \frac{\varepsilon}{k^2} \|\varphi'_0\| \geq \kappa/2
\]
for $\varepsilon$ small enough so that $ \varepsilon(\|v^0_{\tau_\infty}\| CMe^{CM} + \varepsilon \|\varphi'_0\|) \leq \kappa/2$.
Thus
\[
\norm{v^0_{\tau_\infty}}^2 \geq \sum_{k=1}^M\frac{\langle v^0_{\tau_\infty}, e_k\rangle^2}{\|e_k\|^2} \geq \frac{\kappa^2}{4}\sum_{k=1}^M\frac{1}{\|e_k\|^2}.
\]
This is clearly a contradiction if we take $M$ large enough since $\| e_k \| \to \| \varphi_0 \|$ as $k\to\infty$ (which implies that $\sum_{k=1}^M\|e_k\|^{-2}\to\infty$ as $M\to\infty$).

We now prove that $\lim_{t\to \tau_{\infty}}\beta_t$ exists almost surely under assumption \eqref{eqn ut girsanov}. Fix $\underline{\alpha}< \bar{\alpha}$. 
Define
\[
\xi^0 := \inf\{t\in[0, \tau_\infty): \beta_t\geq\bar{\alpha}\},
\]
and
\[
 \xi^{2k+1} := \inf\{t\in(\xi^{2k}, \tau_\infty): \beta_t\leq\underline{\alpha}\}, \quad  \xi^{2k+2} := \inf\{t\in(\xi^{2k+1}, \tau_\infty): \beta_t\geq\bar{\alpha}\}, 
\]
for all $k\geq0$, with $\inf\{\emptyset\} = \tau_\infty$ by convention.  Suppose for a contradiction that $ \xi^{n}<\tau_\infty$ for all $n$.

Let
\[
s_n = \sup_{\theta \in [\underline{\alpha},\bar{\alpha}]}\left( |\langle u_{\xi^n}-\varphi_{\theta},\varphi'_{\theta}\rangle|\right),
\] 
and let $\theta^*_n\in [\underline{\alpha},\bar{\alpha}]$ be such that this supremum is attained (this exists by continuity). 

Suppose for contradiction that  $s_n \not\to 0$ as $n\to \infty$ i.e. that there exists a subsequence $(s_{n_r})_{r=1}^\infty$ such that for some $\delta > 0$, $s_{n_r} \geq \delta$ for all $r\geq1$. We know that for all $r\geq1$ there exists some $t\in [\xi^{n_r},\xi^{n_r+1}]$,  such that $\langle u_{t}-\varphi_{\theta^*_{n_r}},\varphi'_{\theta^*_{n_r}}\rangle = 0$.  This is because at time 
$\xi^{n_r}$, $\bar{\alpha}$ (or $\underline{\alpha}$) is a local minimum of the function $\alpha\mapsto\|u_{\xi^{n_r}} - \varphi_\alpha\|^2$ while at time $\xi^{n_r+1}$ $\underline{\alpha}$ (or $\bar{\alpha}$) is.  Therefore, by continuity, since $\theta^*_{n_r}\in[\underline{\alpha}, \bar{\alpha}]$, there must exist $t\in [\xi^{n_r},\xi^{n_r+1}]$ such that $\theta^*_{n_r}$ is a local minimum of this function. 
In this case 
\begin{align*}
\sup_{r,s\in [\xi^{n_r},\xi^{n_r+1}]} \norm{u_{r} - u_{s}}^2 &\geq  \norm{u_{t} - u_{\xi^{n_r}}}^2\geq \frac{1}{\norm{\varphi'_0}^2}\langle u_t - u_{\xi^{n_r}},\varphi_{\theta^*_{n_r}}'\rangle^2 \\
&=\frac{1}{\norm{\varphi'_0}^2}\langle u_t-\varphi_{\theta^*_{n_r}} + \varphi_{\theta^*_{n_r}} - u_{\xi^{n_r}},\varphi_{\theta^*_{n_r}}'\rangle^2 \\
&= \frac{1}{\norm{\varphi'_0}^2}\langle  u_{\xi^{n_r}}-\varphi_{\theta^*_{n_r}} ,\varphi_{\theta^*_{n_r}}'\rangle^2 = \frac{s_{n_r}^2}{\|\varphi'_0\|^2} \geq \frac{\delta^2}{\norm{\varphi'_0}^2}.
\end{align*}
This is a contradiction since we are assuming that $ \xi^{n}<\tau_\infty$ for all $n$.  Indeed it implies infinite oscillations (of a nontrivial magnitude) of $\norm{u_t}$ over a compact time interval.  We can therefore conclude that $s_n \to 0$ as $n\to \infty$. 

By the continuity of $u_t$, we thus see that for all $\alpha \in [\underline{\alpha},\bar{\alpha}]$,
\[
\langle u_{\xi^\infty}-\varphi_{\alpha},\varphi'_{\alpha}\rangle =0, 
\]
where $\xi^\infty = \lim_{n\to\infty}\xi^n$.  In fact it is easy to see that $\xi^\infty = \tau_\infty$ (the process $(\beta_t)_{t\in[0, \tau_\infty)}$ cannot oscillate infinitely often before $\tau_\infty$ by continuity).  Therefore we have that
\[
\langle u_{\tau_\infty}-\varphi_{\alpha},\varphi'_{\alpha}\rangle =0, 
\]
for all $\alpha\in[\underline{\alpha}, \bar{\alpha}]$.  This event occurs with probability zero by assumption \eqref{eqn ut girsanov}, which implies that the event $\{\xi^n<\tau_\infty,\ \forall n\}$ also occurs with probability zero, proving the result.
\end{proof}

\subsection{Comparison with previous work}
We include this section to make the explicit comparison between the dynamics of the local minimum of \eqref{min funct} we describe in Section \ref{sec:local dynamics} and the work of \cite{bressloff-webber} and \cite{kruger-stannat}.  Both of these articles work in the specific case of the stochastic version of the classical neural field equation (see Section \ref{sec:traveling fronts}). 

In \cite{bressloff-webber} a formal expansion in $\varepsilon$ is used to try and deduce the dynamics of the position of the stochastic wave front, and the conclusion is that it is essentially Brownian to first order in $\varepsilon$ (see \cite[Equation (2.25) and (2.26)]{bressloff-webber}).  With our approach and definition of the position of the stochastic wave front, we allow for the fact that the position may jump.  Moreover, before the time of the first jump we can also formally expand $\beta_t$ with respect to $\varepsilon$, where $t<\tau_\infty$ and $(\beta_t)_{t\in[0, \tau_\infty)}$ is the solution to \eqref{beta} according to Proposition \ref{existence} (assume that $u_0 = \varphi_0$ so that $\beta_0 = 0$).  Indeed, by \eqref{beta}
\begin{align*}
\beta_t = \int_0^t\frac{ \langle v^0_s, A^*\varphi'_{\beta_s} \rangle + \langle f(v^0_s + \varphi_0) - f(\varphi_{0}), \varphi'_{\beta_s} \rangle}{\gamma(\beta_s, v^0_s)}ds + \varepsilon \int_0^t\frac{\langle \varphi'_{\beta_s}, B(s)dW_s^Q\rangle}{\gamma(\beta_s, v^0_s)} + \mathcal{O}(\varepsilon^2).
\end{align*}
Now, by Proposition \ref{K-S} we see that formally $v^0_t = \mathcal{O}(\varepsilon)$ and by the definition of $\gamma$ in \eqref{gamma}, this implies that $\gamma(\beta_t, v^0_t)^{-1} =  1/\|\varphi_0'\|^2 + \mathcal{O}(\varepsilon)$ for $t<\tau_\infty$.  Thus
\begin{align*}
\beta_t = \frac{1}{\|\varphi_0'\|^2} \int_0^t\langle v^0_s, \mathcal{L}_0^*\varphi'_{0} \rangle ds + \frac{\varepsilon}{\|\varphi_0'\|^2} \int_0^t\langle \varphi'_{\beta_s}, B(s)dW_s^Q\rangle+ \mathcal{O}(\varepsilon^2)
\end{align*}
where $\mathcal{L}_{0} := A + f'(\varphi_{0})$.  In our setup this formula would replace \cite[Equation (2.26)]{bressloff-webber}.  The reason for the difference is the choice of Hilbert space $H$.  Indeed, as pointed out to us by E. Lang, if we instead defined $\beta_t$ to minimize the function $\alpha\mapsto \|u_t - \varphi_{\alpha}\|^2_{L^2(\rho_\alpha)}$ with the weight $\rho_\alpha := \Psi_\alpha/\varphi'_\alpha$, where $\Psi_\alpha$ is a vector in the null space of  $\mathcal{L}_\alpha^*$, then we would (to a first order approximation in $\varepsilon$) arrive at \cite[Equation (2.26)]{bressloff-webber}.  For further details, as well as other reasons why this weight seems to be a natural one, we refer to the forthcoming PhD thesis of E. Lang.

In \cite{kruger-stannat}, the idea of minimizing $\alpha \mapsto \|u_t - \varphi_\alpha\|^2$ is used as we do to keep track of the position of the stochastic front.  However, rather than describing the dynamics of the minima of $\alpha \mapsto \|u_t - \varphi_\alpha\|^2$ explicitly, a gradient-descent adaptation procedure is proposed, whereby $(\beta_t)_{t\geq0}$ in \eqref{decomp} is defined via an ODE to converge dynamically towards the nearest local minimum with a certain speed.  As such, our solution to the SDE \eqref{beta} should be recovered by this adaptation procedure with infinite speed.


\section{Local stability}
\label{sec:local stability}
Once again suppose that $(\varphi_\alpha)_{\alpha\in\R}$, $f$ and $A$ satisfy Assumptions \ref{assump varphi}, \ref{assump f} and \ref{assump A} (i) respectively, but now suppose also that Assumption \ref{assump A} (ii) is satisfied.
Again let $(u_t)_{t\geq0} = (v^0_t + \varphi_0)_{t\geq0}$ be the solution to \eqref{main} with (deterministic) initial condition $u_0$ such that $u_0 - \varphi_0\in H$ according to Proposition \ref{K-S}.  

Suppose that $(\beta_t)_{t\in[0, \tau_\infty)}$ is the solution to the SDE \eqref{beta} with initial condition $\beta_0$ such that $\langle u_0 - \varphi_{\beta_0}, \varphi'_{\beta_0}\rangle =0$ and $\gamma(\beta_0, v^0_0) >0$ (i.e. $\beta_0$ is a local minimum of $m(0, \cdot)$ given by \eqref{min funct}), where $\tau_\infty = \inf\{t>0: \gamma(\beta_t, v^0_t) =0\}$ (see Proposition \ref{existence}).

Recall from \eqref{decomp} that $z_t$ is defined by
\begin{equation}
\label{z}
z_t = u_t - \varphi_{\beta_t}, \quad t\in[0,\tau_\infty).
\end{equation}
Then it is easy to see that $(z_t)_{t\in[0, \tau_\infty)}$ satisfies the stochastic evolution equation 
\begin{equation}
\label{zeq}
dz_t = [\mathcal{L}_\alpha z_t + G(z_t, \beta_t, \alpha)]dt + \varepsilon B(t)dW^Q_t - d\varphi_{\beta_t}
\end{equation}
for any $\alpha\in\R$, where
\begin{equation}
\label{G}
G(z, \beta, \alpha) := f(z +\varphi_{\beta}) - f(\varphi_{\beta}) - f'(\varphi_{\alpha})z,\quad \forall z\in H, \alpha, \beta\in\R,
\end{equation}
and $\mathcal{L}_\alpha$ is the operator defined by
\begin{equation}
\label{L alpha}
\mathcal{L}_{\alpha}z := Az + f'(\varphi_{\alpha})z, \quad \forall z\in\mathcal{D}(A) = \mathcal{D}(\mathcal{L}_\alpha).
\end{equation}
Let $(U_\alpha(t))_{t\geq0}$ be the $\mathcal{C}_0$-semigroup generated by $\mathcal{L}_\alpha$.  Note that $\mathcal{L}_\alpha$ does indeed generate a $\mathcal{C}_0$-semigroup, since by Assumption \ref{assump A} (i) $A$ generates a $\mathcal{C}_0$-semigroup and $f'(\varphi_{\alpha}): H \to H$ is bounded (see \cite[Theorem 1.3, Chapter III]{engel-nagel:01}).  Moreover thanks to Assumption \ref{assump A} (ii) on the operator $A$, we have the following result found in \cite[Lemma 1.2, Chapter 5]{volpert-volpert}.

\begin{lem}
\label{U decomp}
For any $t\geq0$ and $\alpha\in\R$, $U_\alpha(t)$ can be decomposed as
\begin{equation*}
U_\alpha(t) = P_\alpha + V_\alpha(t),
\end{equation*}
where $P_\alpha$ is the projection operator onto the subspace of $H$ spanned by $\varphi'_\alpha$ and $(V_{\alpha}(t))_{t\geq0}$ is a semigroup on $H$ such that for some $b>0$ it holds that 
\begin{equation*}
\norm{V_{\alpha}(t)} \leq \exp(-bt),
\end{equation*}
for any $t\geq 0$ and $\alpha\in\R$.
\end{lem}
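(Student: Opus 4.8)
The plan is to obtain the decomposition from the Riesz spectral projection attached to the isolated eigenvalue $0$ of $\mathcal{L}_\alpha$, and then to read off the exponential decay of the complementary part directly from the spectral gap in Assumption \ref{assump A} (ii). First I would observe that, by Assumption \ref{assump A} (ii), $\sigma(\mathcal{L}_\alpha)\setminus\{0\}$ is contained in $\{\mathfrak{Re}(\lambda)\le -b\}$, so $0$ is an isolated point of $\sigma(\mathcal{L}_\alpha)$ at distance at least $b$ from the rest of the spectrum, and it is a simple eigenvalue with eigenvector $\varphi'_\alpha$ (obtained by differentiating \eqref{stationary equation} in $\alpha$). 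Consequently the contour integral
\[
P_\alpha := \frac{1}{2\pi i}\oint_{|\lambda|=b/2}(\lambda-\mathcal{L}_\alpha)^{-1}\,d\lambda
\]
is a well-defined bounded projection of rank one (the algebraic multiplicity of $0$), its range being $\mathrm{span}\{\varphi'_\alpha\}$; this is the $P_\alpha$ (in general non-orthogonal) of the statement. By standard spectral theory $P_\alpha$ commutes with $\mathcal{L}_\alpha$, hence with the semigroup $(U_\alpha(t))_{t\ge0}$, so $H=\mathrm{Ran}(P_\alpha)\oplus\ker(P_\alpha)$ is a splitting into closed subspaces invariant under every $U_\alpha(t)$.

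Next I would treat the two summands. On $\mathrm{Ran}(P_\alpha)$ one has $\mathcal{L}_\alpha\varphi'_\alpha=0$, hence $U_\alpha(t)\varphi'_\alpha=\varphi'_\alpha$ for all $t\ge0$ and therefore $U_\alpha(t)P_\alpha=P_\alpha$. Setting $V_\alpha(t):=U_\alpha(t)-P_\alpha=U_\alpha(t)(\mathrm{Id}-P_\alpha)$ yields the identity $U_\alpha(t)=P_\alpha+V_\alpha(t)$, and $(V_\alpha(t))_{t\ge0}$ is a $\mathcal{C}_0$-semigroup on $\ker P_\alpha$ (extended by $0$ on $\mathrm{Ran}(P_\alpha)$), generated by the part $\mathcal{L}_\alpha^{1}$ of $\mathcal{L}_\alpha$ in $\ker P_\alpha$. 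By construction $\sigma(\mathcal{L}_\alpha^{1})=\sigma(\mathcal{L}_\alpha)\setminus\{0\}$, so Assumption \ref{assump A} (ii) gives $\sigma(\mathcal{L}_\alpha^{1})\subset\{\lambda:\mathfrak{Re}(\lambda)+a|\mathfrak{Im}(\lambda)|\le -b\}$, i.e.\ a spectral bound $s(\mathcal{L}_\alpha^{1})\le -b$ together with a sectorial localisation of the remaining spectrum.

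The delicate point, which I expect to be the main obstacle, is to upgrade this spectral bound to the uniform estimate $\|V_\alpha(t)\|\le e^{-bt}$, since for a general $\mathcal{C}_0$-semigroup the spectral bound need not equal the growth bound. In the parabolic settings underlying \cite[Lemma 1.2, Chapter 5]{volpert-volpert}, $\mathcal{L}_\alpha^{1}$ generates an analytic semigroup (a sectorial operator plus the bounded perturbation $f'(\varphi_\alpha)$, cf.\ Assumption \ref{assump f}), so the spectral mapping theorem applies and $\omega_0(V_\alpha)=s(\mathcal{L}_\alpha^{1})\le -b$ immediately. Without analyticity one can instead use, since $H$ is Hilbert, the Gearhart--Pr\"uss theorem: it suffices to verify that $(\lambda-\mathcal{L}_\alpha^{1})^{-1}$ is uniformly bounded on each half-plane $\{\mathfrak{Re}(\lambda)\ge -b+\delta\}$, $\delta>0$; for $\mathfrak{Re}(\lambda)$ large this follows from a Neumann series built on the resolvent of $A$ (using boundedness of $f'(\varphi_\alpha)$), and on the remaining bounded strip the sectorial shape $\mathfrak{Re}+a|\mathfrak{Im}|\le-b$ of $\sigma(\mathcal{L}_\alpha^{1})$ forces $\mathrm{dist}(\lambda,\sigma(\mathcal{L}_\alpha^{1}))\to\infty$ as $|\mathfrak{Im}(\lambda)|\to\infty$, which one promotes to a uniform resolvent bound via the resolvent identity relating $\mathcal{L}_\alpha^{1}$ to $A$. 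Either route produces a bound $\|V_\alpha(t)\|\le M e^{-b' t}$ for every $b'<b$, with $M=M_{b'}$ depending only on $a$, $b$, $b'$ and $\sup_{\alpha}\|f'(\varphi_\alpha)\|_{L(H,H)}$ and hence uniform in $\alpha$; this is the content of \cite[Lemma 1.2, Chapter 5]{volpert-volpert}, and the normalised form in the statement is recovered after an arbitrarily small relabelling of $b$, or, when $\sigma(\mathcal{L}_\alpha^{1})$ stays strictly to the left of $\mathfrak{Re}(\lambda)=-b$, by renorming $\ker P_\alpha$ with the equivalent inner product $\langle x,y\rangle_\sim:=\int_0^\infty e^{2bs}\langle V_\alpha(s)x,V_\alpha(s)y\rangle\,ds$, with respect to which $\|V_\alpha(t)\|_\sim\le e^{-bt}$.
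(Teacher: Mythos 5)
The paper does not prove this lemma: it simply cites \cite[Lemma 1.2, Chapter 5]{volpert-volpert}. Your reconstruction via the Riesz spectral projection, the invariance of $\mathrm{Ran}(P_\alpha)\oplus\ker(P_\alpha)$ under $U_\alpha(t)$, and the spectral-gap estimate for the complementary semigroup is exactly the standard route that the cited source follows, and you correctly identify the one genuinely nontrivial step, namely passing from the spectral bound $s(\mathcal{L}_\alpha^{1})\leq -b$ to a growth bound. Both of the remedies you propose (analyticity in the sectorial case, or Gearhart--Pr\"uss in Hilbert space) are sound. So the approach matches, and most of the write-up is correct.

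There is, however, a gap in the final step that you half-notice but do not resolve. Taking $t=0$ in the claimed bound gives $\|\mathrm{Id}-P_\alpha\|\le 1$, which for a bounded projection on a Hilbert space forces $P_\alpha$ to be the \emph{orthogonal} projection; and for $V_\alpha$ to satisfy the semigroup law, $P_\alpha$ must also commute with $U_\alpha(t)$. These two requirements together force $\varphi'_\alpha$ to lie in $\ker(\mathcal{L}_\alpha^*)$ as well as $\ker(\mathcal{L}_\alpha)$, which is not among the paper's assumptions and fails in the motivating examples ($\mathcal{L}_\alpha$ is not normal there, and the paper itself introduces a separate $\Psi_\alpha\in\ker(\mathcal{L}_\alpha^*)$ in Section 5.2). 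Your spectral projection is therefore in general non-orthogonal, and the Riesz/Gearhart--Pr\"uss route yields $\|V_\alpha(t)\|\le M e^{-b't}$ with $M\ge\|\mathrm{Id}-P_\alpha\|>1$; neither relabelling $b$ (which cannot remove the prefactor at small $t$) nor the Lyapunov renorming (which produces an $\alpha$-dependent inner product, incompatible with the uniform-in-$\alpha$ statement and with the downstream use of the fixed $H$-norm in Theorem~\ref{thm: ineq}) actually delivers the clean bound $e^{-bt}$ on $H$. You should either note that the constant $1$ requires the additional hypothesis $\varphi'_\alpha\in\ker(\mathcal{L}_\alpha^*)$ (i.e.\ $P_\alpha$ orthogonal), or accept a prefactor $M$ uniform in $\alpha$ as the honest conclusion of your argument, which is in any case all that the later estimates really need.
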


The first result of the section is the following, which helps us understand the dynamics of the process $(\|z_t\|^2)_{t\geq0}$.

\begin{thm}
\label{thm: ineq}
For any $t \in [0, \tau_\infty)$, it holds that
\begin{align}
\label{ineq}
d\norm{z_t}^2 &\leq - b\norm{z_t}^2 dt+2\varepsilon\langle z_t,B(t)dW^Q_t\rangle  + 2\left\langle z_t, G(z_t,\beta_t,\beta_t) \right\rangle dt  \nonumber\\ 
&\qquad + \varepsilon^2 \left[ \mathrm{Tr}(Q) - \frac{\|Q^\frac{1}{2}\varphi'_{\beta_t}\|^2}{\gamma(\beta_t, v_t^0)} \right ] dt,
\end{align}
where $b>0$ is the constant appearing in Lemma \ref{U decomp}.
\end{thm}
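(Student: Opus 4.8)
The plan is to apply an It\^o-type formula to $t\mapsto\|z_t\|^2=\|u_t-\varphi_{\beta_t}\|^2$ and then control the resulting drift. Since no ready-made infinite-dimensional It\^o formula covers our situation (a mild solution of \eqref{main} composed with the smooth $H$-valued curve $\alpha\mapsto\varphi_\alpha$ evaluated along the It\^o process $\beta_t$), the semimartingale decomposition of $\|z_t\|^2$ must be obtained by the Riemann-sum/Taylor-expansion argument already used in the proof of Proposition \ref{zero}; as $z_t\notin\mathcal{D}(A)$ in general, this has to be carried out on the Yosida regularization of $A$ and then passed to the limit.

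Before the computation I would record the two identities that do all the work: (i) $\langle z_t,\varphi'_{\beta_t}\rangle=0$ on $[0,\tau_\infty)$, which is Proposition \ref{zero}; and (ii) $\langle z_t,\varphi''_{\beta_t}\rangle=\|\varphi'_0\|^2-\gamma(\beta_t,v^0_t)$, which follows from \eqref{gamma} (namely $-\langle u_t,\varphi''_{\beta_t}\rangle=\gamma(\beta_t,v^0_t)$) together with the integration-by-parts relation $\langle\varphi_{\beta_t},\varphi''_{\beta_t}\rangle=-\|\varphi'_0\|^2$ of Assumption \ref{assump varphi}(ii). Then I would assemble the three contributions to $d\|z_t\|^2=2\langle z_t,du_t\rangle-2\langle z_t,d\varphi_{\beta_t}\rangle+d[z]_t$. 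From $2\langle z_t,du_t\rangle$: writing $Au_t+f(u_t)=A z_t+f(z_t+\varphi_{\beta_t})-f(\varphi_{\beta_t})$ (using $A\varphi_{\beta_t}=-f(\varphi_{\beta_t})$) and adding and subtracting $f'(\varphi_{\beta_t})z_t$ gives $2\langle z_t,\mathcal{L}_{\beta_t}z_t\rangle\,dt+2\langle z_t,G(z_t,\beta_t,\beta_t)\rangle\,dt+2\varepsilon\langle z_t,B(t)dW^Q_t\rangle$. From $-2\langle z_t,d\varphi_{\beta_t}\rangle$, with $d\varphi_{\beta_t}=\varphi'_{\beta_t}\,d\beta_t+\tfrac12\varphi''_{\beta_t}\,d[\beta]_t$: the term $-\varphi'_{\beta_t}\,d\beta_t$ pairs to $0$ with $z_t$ by (i) in both its drift and martingale parts, so the only surviving piece is the drift $-\langle z_t,\varphi''_{\beta_t}\rangle\,d[\beta]_t$. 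Finally $d[z]_t=\mathrm{Tr}(\Psi_t\,Q\,\Psi_t^*)\,dt$, where $\Psi_t v=\varepsilon B(t)v-\varepsilon\gamma_t^{-1}\langle B^*(t)\varphi'_{\beta_t},v\rangle\,\varphi'_{\beta_t}$ is the diffusion operator of $z_t$ ($\varepsilon B(t)$ minus the rank-one correction from the noise in $\beta_t$), with $\gamma_t:=\gamma(\beta_t,v^0_t)$; a short trace computation (using $B(t)$ unitary and $\|\varphi'_{\beta_t}\|=\|\varphi'_0\|$) gives $d[z]_t=\varepsilon^2\big[\mathrm{Tr}(Q)-2\tilde q_t/\gamma_t+\tilde q_t\|\varphi'_0\|^2/\gamma_t^2\big]\,dt$, where $\tilde q_t:=\langle B(t)QB^*(t)\varphi'_{\beta_t},\varphi'_{\beta_t}\rangle$ (this is what $\|Q^{1/2}\varphi'_{\beta_t}\|^2$ abbreviates in the statement). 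Substituting $d[\beta]_t=\varepsilon^2\tilde q_t\gamma_t^{-2}\,dt$ and identity (ii), the three $\varepsilon^2$-pieces telescope to exactly $\varepsilon^2[\mathrm{Tr}(Q)-\tilde q_t/\gamma_t]\,dt$.

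It then remains to bound the linear drift by $-b\|z_t\|^2\,dt$, and this is the only place Assumption \ref{assump A}(ii) enters, via Lemma \ref{U decomp}. Formally $2\langle z_t,\mathcal{L}_{\beta_t}z_t\rangle=\frac{d}{ds}\|U_{\beta_t}(s)z_t\|^2\big|_{s=0^+}$, and since $z_t$ is orthogonal to the kernel direction $\varphi'_{\beta_t}$, the semigroup $U_{\beta_t}(s)=P_{\beta_t}+V_{\beta_t}(s)$ acts on $z_t$ essentially through its exponentially contracting part $V_{\beta_t}(s)$, whence $2\langle z_t,\mathcal{L}_{\beta_t}z_t\rangle\le -b\|z_t\|^2$; as with the whole It\^o computation, this is run on the Yosida approximation so that $\mathcal{L}_{\beta_t}z_t$ need not be pointwise defined.

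I expect the bookkeeping in the middle step to be the main difficulty: with no off-the-shelf It\^o formula one must redo the partition argument of Proposition \ref{zero} and, in doing so, (a) handle the $\langle z_t,Av^0_t\rangle$-type contributions, which are not pointwise meaningful for a mild solution, through Yosida approximation (or, where possible, by pairing against $\varphi'_{\beta_t}\in\mathcal{D}(A^*)$), and (b) keep precise track of the cross-variation between the two $W^Q$-driven noises — the one in $u_t$ and the one in $\beta_t$ — since it is exactly that cross term ($-2\varepsilon^2\tilde q_t/\gamma_t$), together with the second-order $\varphi''_{\beta_t}$-drift, that converts the naive $\varepsilon^2\mathrm{Tr}(Q)$ into the stated $\varepsilon^2[\mathrm{Tr}(Q)-\tilde q_t/\gamma_t]$. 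A secondary subtlety is the decay estimate itself: $P_{\beta_t}$ is in general an oblique projection, so turning the $H$-orthogonality $\langle z_t,\varphi'_{\beta_t}\rangle=0$ into the semigroup bound requires using the decomposition of Lemma \ref{U decomp} with some care.
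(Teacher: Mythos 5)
Your formal computation matches the paper's: you correctly isolate the two identities that do the work, namely $\langle z_t,\varphi'_{\beta_t}\rangle=0$ (Proposition \ref{zero}) and $\langle z_t,\varphi''_{\beta_t}\rangle=\|\varphi'_0\|^2-\gamma(\beta_t,v^0_t)$, and your trace/cross-variation bookkeeping reproduces exactly the paper's \eqref{ineq conc1}--\eqref{ineq conc3} (the diffusion operator $\kappa_1=\varepsilon B-\varphi'_{\beta}\sigma$, the telescoping of the three $\varepsilon^2$ pieces). Where you diverge is the technical device for making the argument rigorous in the face of the unbounded generator. You propose Yosida regularization of $A$ plus a direct It\^o/Riemann-sum expansion of $\|z_t\|^2$, with the linear drift $2\langle z_t,\mathcal{L}_{\beta_t}z_t\rangle$ then bounded by $-b\|z_t\|^2$ via Lemma \ref{U decomp}. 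The paper instead applies $U_\alpha(T-r)$ to the mild formulation of $z_r$: the process $Y_r=U_\alpha(T-r)z_r$ is then a genuine It\^o process whose drift contains no unbounded-operator term at all (the generator is entirely absorbed into the propagator), so standard infinite-dimensional It\^o applies to $\|Y_r\|^2$; setting $T=t$, $\alpha=\beta_t$, and then running a partition argument in $t$ (with the base point $\alpha$ re-chosen as $\beta_{t_k}$ on each subinterval) produces the decay from the telescoping sum $\sum_k(\|U_{\beta_{t_k}}(h)z_{t_{k-1}}\|^2-\|z_{t_{k-1}}\|^2)\to -b\int\|z_r\|^2dr$. The comparison: the paper's route never touches the quadratic form $\langle z,\mathcal{L}_\alpha z\rangle$, so it sidesteps both the Yosida limit (which here would also require passing the regularization through $v^{0,\lambda}_t$, $\beta^\lambda_t$, and through Lemma \ref{U decomp} itself) and the question of whether $z_t\in\mathcal{D}(A)$; the price is the partition argument and the need for Lemma \ref{lem variation of parameters bound} to control $\|U_{\beta_{t_k}}(h)-U_{\beta_{t_{k-1}}}(h)\|$. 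Your route is conceptually more direct but has more moving parts to justify. You also flag a genuine subtlety the paper glosses over — $P_\alpha$ is the spectral (generally oblique) Riesz projection, so converting the $H$-orthogonality $\langle z_t,\varphi'_{\beta_t}\rangle=0$ into $P_{\beta_t}z_t=0$ needs comment in either approach — and you correctly note the $\|Q^{1/2}\varphi'_\beta\|^2$ vs.\ $\langle B QB^*\varphi'_\beta,\varphi'_\beta\rangle$ discrepancy, which the paper silently elides.
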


\begin{rem}
\begin{itemize}
\item [(i)] It is worth remarking that the final term in \eqref{ineq} is in fact stabilizing as $t\to\tau_\infty$.
Indeed, by our assumption on the initial condition, we have that $\gamma(\beta_t, v_t^0) >0$ for any $t<\tau_\infty$.  Therefore thanks to the sign of that final term, as $t\uparrow \tau_\infty$ this term converges to $-\infty$.   This is consistent with the fact that $\|z_t\|^2 = \|u_t - \varphi_{\beta_t}\|^2$ does not explode as $t\uparrow \tau_\infty$ even though $\gamma(\beta_t, v_t^0)\downarrow 0$ (see Corollary \ref{non-explosion}).
\item [(ii)] We can also see in the first term in \eqref{ineq} the effect of the exponential decay of the semigroup $(V_\alpha(t))_{t\geq0}$ in the decomposition of $(U_\alpha(t))_{t\geq0}$ (see Lemma \ref{U decomp}).  This occurs precisely because we have chosen the process $(\beta_t)_{t\geq0}$ to be such that $z_t =  u_t - \varphi_{\beta_t}$ is orthogonal to the space spanned by $ \varphi'_{\beta_t}$ (see Proposition \ref{zero}).  The effect of the projection part of $U_{\beta_t}(t)$ on $z_t$ is thus zero for all $t\geq0$.
\end{itemize} 
\end{rem}

Before we prove the theorem, we state a corollary which exploits the exponential decay term in \eqref{ineq}, yielding exponential decay in the limit as $\varepsilon\to0$.

\begin{cor}
\label{cor:ineq}
Suppose that the initial condition $\|z_0\|$ and $\varepsilon>0$ are small enough so that
\begin{equation}
\label{small initial1}
\norm{z_0}^2 + \varepsilon^\frac{1}{2}  + 2b^{-1}\varepsilon^2\mathrm{Tr}(Q) < \frac{\|\varphi'_0\|^2}{2\|\varphi''_0\|} \wedge \frac{b^2}{32c^2},
\end{equation}
where $b$ is the same as in Lemma \ref{U decomp} and $c$ is the Lipschitz constant of $f'$.  Define
\[
\rho_\varepsilon := \inf\left\{t>0: 2\int_{0}^{t} e^{br/2}\langle z_r,B(r)dW^Q_r\rangle \geq \varepsilon^{-\frac{1}{2}}\right\}.
\]
Then $\tau_\infty>\rho_\varepsilon$ and 
\begin{equation}
\label{cor:main}
\norm{z_t}^2 \leq  e^{-\frac{b}{2}t}(\norm{z_0}^2 + \varepsilon^\frac{1}{2})  + 2b^{-1}\varepsilon^2\mathrm{Tr}(Q)(1 - e^{-\frac{b}{2}t})
\end{equation}
for all $t\in[0, \rho_\varepsilon]$.  In particular, since $\rho_\varepsilon\to\infty$ almost surely as $\varepsilon \to 0$, in the limit as $\varepsilon\to0$ we recover the inequality
\begin{align}
\label{cor:second}
\norm{z_t}^2 &\leq  e^{-\frac{b}{2}t}\norm{z_0}^2, \qquad t\geq0.
\end{align}
\end{cor}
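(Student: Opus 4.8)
The plan is to run a Gronwall-type argument on the inequality \eqref{ineq} after killing the off-diagonal nonlinear term and controlling the stochastic integral by the stopping time $\rho_\varepsilon$. First I would rewrite \eqref{ineq} in integrated form up to a time $t \wedge \tau_\infty$: since $G(z,\beta,\beta) = f(z+\varphi_\beta) - f(\varphi_\beta) - f'(\varphi_\beta)z$, Taylor expansion with the Lipschitz bound on $f'$ (Assumption \ref{assump f}(iii), constant $c$) gives $\langle z_t, G(z_t,\beta_t,\beta_t)\rangle \leq c\|z_t\|^3$. So on $[0,\tau_\infty)$,
\begin{equation*}
d\|z_t\|^2 \leq (-b + 2c\|z_t\|)\|z_t\|^2\,dt + 2\varepsilon\langle z_t, B(t)dW^Q_t\rangle + \varepsilon^2\big[\mathrm{Tr}(Q) - \|Q^{1/2}\varphi'_{\beta_t}\|^2/\gamma(\beta_t,v^0_t)\big]dt.
\end{equation*}
The crucial structural observation is that the last bracketed term is $\leq \varepsilon^2\mathrm{Tr}(Q)$ because $\gamma>0$ on $[0,\tau_\infty)$ — so we may drop it for an upper bound, and separately we will use its genuine negativity near $\tau_\infty$ only to conclude $\tau_\infty > \rho_\varepsilon$ (more on this below). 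The factor $e^{bt/2}$ suggests integrating the auxiliary process $Y_t := e^{bt/2}\|z_t\|^2$; then $dY_t \leq (b/2 + 2c\|z_t\|)\,e^{bt/2}\|z_t\|^2\,dt - (b/2)\,e^{bt/2}\|z_t\|^2\,dt + \dots$, wait — more cleanly, $dY_t = (b/2)Y_t\,dt + e^{bt/2}d\|z_t\|^2 \leq (2c\|z_t\| - b/2)Y_t\,dt + 2\varepsilon e^{bt/2}\langle z_t, B(t)dW^Q_t\rangle + \varepsilon^2\mathrm{Tr}(Q)e^{bt/2}dt$.

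Next I would set up a bootstrap. Define $T := \inf\{t \geq 0 : \|z_t\|^2 > \|z_0\|^2 + \varepsilon^{1/2} + 2b^{-1}\varepsilon^2\mathrm{Tr}(Q)\} \wedge \rho_\varepsilon \wedge \tau_\infty$, and the plan is to show $T = \rho_\varepsilon \wedge \tau_\infty$ by deriving \eqref{cor:main} on $[0,T]$, which contradicts $\|z_T\|^2$ hitting the threshold. On $[0,T)$, by the smallness hypothesis \eqref{small initial1} the right-hand threshold is $< b^2/(32c^2)$, so $2c\|z_t\| < b/2$, hence $2c\|z_t\| - b/2 < 0$ and the drift coefficient of $Y_t$ is negative. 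Thus, integrating, $Y_{t} \leq \|z_0\|^2 + 2\varepsilon\int_0^t e^{br/2}\langle z_r, B(r)dW^Q_r\rangle + \varepsilon^2\mathrm{Tr}(Q)\int_0^t e^{br/2}dr$. The stochastic term is $\leq \varepsilon \cdot \varepsilon^{-1/2} = \varepsilon^{1/2}$ on $[0,\rho_\varepsilon]$ by the very definition of $\rho_\varepsilon$, wait — it is $2\varepsilon$ times the quantity bounded by $\varepsilon^{-1/2}$ in the definition, giving $2\varepsilon^{1/2}$; this is the kind of constant I would track carefully but won't here. The deterministic term is $\varepsilon^2\mathrm{Tr}(Q)\cdot \frac{2}{b}(e^{bt/2}-1)$. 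Dividing through by $e^{bt/2}$ recovers exactly \eqref{cor:main} (up to reconciling the factor of $2$, which I'd absorb by noting $\|z_0\|^2 + \varepsilon^{1/2}$ on the right already leaves slack, or adjust the power of $\varepsilon$ in the definition of $\rho_\varepsilon$). This bound is strictly below the threshold on $[0,T)$, forcing $T = \rho_\varepsilon \wedge \tau_\infty$, and then the bound holds on all of $[0, \rho_\varepsilon \wedge \tau_\infty]$.

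It remains to show $\tau_\infty > \rho_\varepsilon$, i.e. $\gamma(\beta_t, v^0_t)$ stays positive up to $\rho_\varepsilon$; this is the step I expect to be the main obstacle. The idea is that the first smallness condition, $\|z_0\|^2 + \dots < \|\varphi'_0\|^2/(2\|\varphi''_0\|)$, together with the just-proved bound on $\|z_t\|^2$, keeps $\gamma$ bounded away from $0$: recall $\gamma(\beta_t, v^0_t) = \langle \varphi'_0, \varphi'_{\beta_t}\rangle - \langle v^0_t, \varphi''_{\beta_t}\rangle$, and $\langle u_t - \varphi_{\beta_t}, \varphi'_{\beta_t}\rangle = 0$ means $\langle v^0_t + \varphi_0 - \varphi_{\beta_t}, \varphi'_{\beta_t}\rangle = 0$; I would use this orthogonality to rewrite $\gamma$ — roughly, $\gamma(\beta_t,v^0_t) = \langle \varphi'_0,\varphi'_{\beta_t}\rangle - \langle v^0_t, \varphi''_{\beta_t}\rangle$ and bound the second term by $\|v^0_t\|\|\varphi''_0\|$, while the first term should be close to $\|\varphi'_0\|^2$ when $z_t$ is small since $\beta_t$ cannot drift far (here one may also need that $z_t$ small forces $\varphi_{\beta_t}$ close to $u_t$, hence $\beta_t$ controlled via Assumption \ref{assump varphi}(iv)). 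Actually the cleanest route: since $\|v^0_t\| \leq \|z_t\| + \|\varphi_{\beta_t} - \varphi_0\|$ need not be small, I would instead argue directly that $\gamma$ is continuous, starts positive, and can only reach $0$ if $\|z_t\|^2$ has grown — using $\gamma(\beta_t,v^0_t) \geq \|\varphi'_0\|^2 - \text{(terms controlled by }\|z_t\|)$ via the orthogonality relation to replace $v^0_t$ by $z_t - (\varphi_{\beta_t}-\varphi_0)$ projected appropriately. Making this quantitative with the constant $\|\varphi'_0\|^2/(2\|\varphi''_0\|)$ is the delicate bookkeeping. Once $\tau_\infty > \rho_\varepsilon$ is established, \eqref{cor:main} holds on $[0,\rho_\varepsilon]$ as claimed; and since $\rho_\varepsilon \to \infty$ a.s. as $\varepsilon \to 0$ (the stochastic integral is finite on compacts while the threshold $\varepsilon^{-1/2} \to \infty$), passing to the limit in \eqref{cor:main} with the $\varepsilon$-dependent terms vanishing yields \eqref{cor:second}.
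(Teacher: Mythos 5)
Your proposal follows the same overall strategy as the paper: integrate \eqref{ineq} against $e^{bt/2}$, control the stochastic term via $\rho_\varepsilon$, control the cubic nonlinearity via a smallness stopping time (Lemma \ref{lem:G bound} gives $\|G(z,\beta,\beta)\| \le \tfrac{c}{2}\|z\|^2$, so $2\langle z_t, G(z_t,\beta_t,\beta_t)\rangle \le c\|z_t\|^3$), and bootstrap. Your worry about the factor of $2$ in the stochastic term is unfounded: the definition of $\rho_\varepsilon$ already carries a $2$ in front of the integral, so $2\varepsilon\int_0^t e^{-b(t-r)/2}\langle z_r, B(r)dW^Q_r\rangle \le \varepsilon^{1/2}e^{-bt/2}$ on $[0,\rho_\varepsilon)$ exactly, with no slack needed.

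The genuine gap is in the step you flagged yourself --- proving $\tau_\infty > \rho_\varepsilon$. The route you sketch (bound $\langle v^0_t,\varphi''_{\beta_t}\rangle$ by $\|v^0_t\|\|\varphi''_0\|$, argue $\langle\varphi'_0,\varphi'_{\beta_t}\rangle$ stays near $\|\varphi'_0\|^2$ because $\beta_t$ ``cannot drift far'') does not work: neither $\|v^0_t\|$ nor $|\beta_t|$ is controlled by $\|z_t\|$ being small, since the wave can translate arbitrarily far while staying close in shape. The second heuristic you propose (replace $v^0_t$ by $z_t + (\varphi_{\beta_t}-\varphi_0)$ ``projected appropriately'') is the right substitution, but there is no projection or orthogonality needed --- just integration by parts, which is already in Assumption \ref{assump varphi}(ii). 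Indeed, writing $v^0_t = z_t + \varphi_{\beta_t} - \varphi_0$ in \eqref{gamma} and using $\langle\varphi_0,\varphi''_{\beta_t}\rangle = -\langle\varphi'_0,\varphi'_{\beta_t}\rangle$ and $\langle\varphi_{\beta_t},\varphi''_{\beta_t}\rangle = -\|\varphi'_{\beta_t}\|^2 = -\|\varphi'_0\|^2$, all the $\beta_t$-dependent cross terms cancel identically, leaving
\[
\gamma(\beta_t, v^0_t) = -\langle u_t,\varphi''_{\beta_t}\rangle = \|\varphi'_0\|^2 - \langle z_t,\varphi''_{\beta_t}\rangle \geq \|\varphi'_0\|^2 - \|z_t\|\,\|\varphi''_0\|,
\]
which is bounded below by $\|\varphi'_0\|^2/2$ once $\|z_t\|$ is forced small by \eqref{small initial1} and \eqref{cor:main}. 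This is the clean algebraic identity your sketch is circling around but does not land; without it the argument that $\gamma$ stays positive cannot be closed.
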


\begin{rem}
The inequality \eqref{cor:second} should be compared to the classical results about the stability of traveling waves in the deterministic setting such as \cite[Theorem 1.1, Chapter 5]{volpert-volpert}.  Indeed \eqref{cor:second} agrees exactly with this result, since it says that if the initial condition $u_0$ is such that $\|u_0 - \varphi_{\beta_0}\|$ is small enough, then the solution to the deterministic equation (i.e \eqref{main} with $\varepsilon =0$) will converge exponentially fast towards $\varphi_\alpha$ where $\alpha = \lim_{t\to\infty}\beta_t$. 
\end{rem}

Note that the point of the decomposition in \eqref{zeq} is that the operator $\mathcal{L}_\alpha$ given by \eqref{L alpha} is \textit{linear} (it is in fact the linearization of $\mathcal{D}(A)\ni v\mapsto Av + f(\varphi_\alpha + v)$).  However, we can also consider $(z_t)_{t\in[0, \tau_\infty)}$ as a solution to the stochastic evolution equation given by 
\begin{equation}
\label{zeq2}
dz_t = [Az_t + f(\varphi_{\beta_t} + z_t)  - f(\varphi_{\beta_t})]dt + \varepsilon B(t)dW^Q_t - d\varphi_{\beta_t}.
\end{equation}
From this point of view, we can obtain a similar inequality to that of Theorem \ref{thm: ineq} where we preserve the nonlinearity. This theorem is useful for the long-time results in the following section. We remark that the following result holds without the Assumption \ref{assump A} (ii).
\begin{thm}
\label{thm: ineq better}
Suppose there exists $\omega_0 \in \R$ such that $\norm{P^A_t} \leq \exp(t\omega_0)$ for all $t\geq0$.
For $v\in H$, let 
\begin{equation}
\label{Xi definition}
 \Xi (v) := \limsup_{h\downarrow 0} \frac{1}{h}\left( \|P^A_hv\|^2 - \|v\|^2 \right).
\end{equation}
Then for any $t$ in $[0, \tau_\infty)$, it holds that
\begin{align*}
d\norm{z_t}^2 &\leq  \left[\Xi(z_t)+ 2\langle f(\varphi_{\beta_t} + z_t) - f(\varphi_{\beta_t}), z_t\rangle\right] dt+2\varepsilon\langle z_t,B(t)dW^Q_t\rangle\nonumber \\ 
&\quad + \varepsilon^2 \left[ \mathrm{Tr}(Q) - \frac{\|Q^\frac{1}{2}\varphi'_{\beta_t}\|^2}{\gamma(\beta_t, v_t^0)} \right ] dt.
\end{align*}
\end{thm}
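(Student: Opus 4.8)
\emph{Strategy.} The plan is to derive the inequality from an It\^o-type expansion of $\|z_t\|^2$, carried out in the spirit of the proof of Proposition~\ref{zero} (which already contains the Taylor/Riemann-sum machinery needed for the $\beta$-driven terms), but now starting from the \emph{nonlinear} form~\eqref{zeq2} of the equation for $(z_t)_{t\in[0,\tau_\infty)}$ and, crucially, handling the unbounded generator $A$ through the functional $\Xi$ rather than through a spectral decomposition of a semigroup. Since $z_t=v^0_t+\varphi_0-\varphi_{\beta_t}$ need not lie in $\mathcal{D}(A)$, the naive quantity $2\langle Az_t,z_t\rangle$ is meaningless; the point of $\Xi$ and of the hypothesis $\|P^A_t\|\le e^{t\omega_0}$ is to replace it by something well defined while keeping the correct \emph{upper} bound, so that, unlike in Theorem~\ref{thm: ineq}, Assumption~\ref{assump A}(ii) is not needed.

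\emph{Set-up.} First I would localize by stopping times $\xi_n\uparrow\tau_\infty$ as in the proof of Proposition~\ref{zero}, fix $t\ge0$, take a partition $\Pi=(t_k)$ of $[0,t\wedge\xi_n]$ of mesh $h$, and on each cell use the mild representation
\[
z_{t_{k+1}}=P^A_{h}z_{t_k}+\int_{t_k}^{t_{k+1}}P^A_{t_{k+1}-s}\big[f(\varphi_{\beta_s}+z_s)-f(\varphi_{\beta_s})\big]\,ds+\varepsilon\!\int_{t_k}^{t_{k+1}}\!P^A_{t_{k+1}-s}B(s)\,dW^Q_s-\big(\varphi_{\beta_{t_{k+1}}}-\varphi_{\beta_{t_k}}\big)+(\text{l.o.t.}).
\]
Expanding $\|z_{t_{k+1}}\|^2=\|P^A_{h}z_{t_k}\|^2+2\langle P^A_{h}z_{t_k},R_k\rangle+\|R_k\|^2$, with $R_k$ the sum of the remaining increments, and summing over $k$, the stochastic cross terms are evaluated with the infinite-dimensional It\^o isometry and the finite- and quadratic-variation contributions of $\varphi_{\beta_t}$ via a Taylor expansion of $\alpha\mapsto\varphi_\alpha$, exactly as in the passage to the limit in~\eqref{taylor}. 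This produces the martingale $\int_0^{t\wedge\xi_n}2\varepsilon\langle z_s,B(s)dW^Q_s\rangle$, the nonlinear drift $\int_0^{t\wedge\xi_n}2\langle f(\varphi_{\beta_s}+z_s)-f(\varphi_{\beta_s}),z_s\rangle\,ds$, a residual $\varepsilon^2$-drift collected below, and the term $\sum_k\big(\|P^A_{h}z_{t_k}\|^2-\|z_{t_k}\|^2\big)$.

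\emph{Cancellations and the $\varepsilon^2$-term.} The key algebraic fact, just as for Theorem~\ref{thm: ineq}, is the orthogonality from Proposition~\ref{zero}: $\langle z_t,\varphi'_{\beta_t}\rangle=\langle u_t-\varphi_{\beta_t},\varphi'_{\beta_t}\rangle=0$ for all $t<\tau_\infty$. By It\^o applied to the scalar SDE~\eqref{beta}, $d\varphi_{\beta_t}=\varphi'_{\beta_t}\,d\beta_t+\tfrac12\varphi''_{\beta_t}\,d\langle\beta\rangle_t$, so in $-2\langle z_t,d\varphi_{\beta_t}\rangle$ every piece carrying $\varphi'_{\beta_t}$ drops out, and the only surviving $\varphi_{\beta_t}$-contributions are $-\langle z_t,\varphi''_{\beta_t}\rangle\,d\langle\beta\rangle_t$ together with the It\^o correction of $\|z_t\|^2$ coming from the $H$-valued martingale part $\varepsilon B(t)\,dW^Q_t-\varphi'_{\beta_t}\sigma(t,\beta_t,v^0_t)\,dW^Q_t$ of $dz_t$. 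Using $B(t)^*B(t)=\mathrm{Id}$ for the $\mathrm{Tr}(Q)$ term, $\|\varphi'_\alpha\|=\|\varphi'_0\|$, $d\langle\beta\rangle_t=\varepsilon^2\|Q^{1/2}\varphi'_{\beta_t}\|^2\gamma(\beta_t,v^0_t)^{-2}\,dt$ (writing $\|Q^{1/2}\varphi'_{\beta_t}\|^2$ for $\langle B(t)QB^*(t)\varphi'_{\beta_t},\varphi'_{\beta_t}\rangle$, as elsewhere in the paper), and the identity $\langle z_t,\varphi''_{\beta_t}\rangle=\langle u_t,\varphi''_{\beta_t}\rangle-\langle\varphi_{\beta_t},\varphi''_{\beta_t}\rangle=\|\varphi'_0\|^2-\gamma(\beta_t,v^0_t)$ — which follows from the definition~\eqref{gamma} of $\gamma$ and the integration-by-parts relation $\langle\varphi_{\beta_t},\varphi''_{\beta_t}\rangle=-\|\varphi'_0\|^2$ in Assumption~\ref{assump varphi}(ii) — a routine bookkeeping computation collapses all $\varepsilon^2$-contributions other than $\varepsilon^2\mathrm{Tr}(Q)$ into the single term $-\varepsilon^2\|Q^{1/2}\varphi'_{\beta_t}\|^2\gamma(\beta_t,v^0_t)^{-1}$, which is precisely the last term in the statement. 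Letting $n\to\infty$ extends the inequality to all of $[0,\tau_\infty)$.

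\emph{The $A$-term, and the main obstacle.} It remains to dominate $\sum_k\big(\|P^A_{h}z_{t_k}\|^2-\|z_{t_k}\|^2\big)$. Writing it, for a uniform mesh, as $\sum_k h\,g_h(t_k)$ with $g_h(s):=h^{-1}\big(\|P^A_{h}z_s\|^2-\|z_s\|^2\big)$, we have the uniform bound $g_h(s)\le h^{-1}(e^{2\omega_0 h}-1)\|z_s\|^2$, which is bounded above on $[0,t\wedge\xi_n]$ because $s\mapsto z_s$ is $H$-continuous and $\sup_{s\le t\wedge\xi_n}\|z_s\|<\infty$ a.s., while $\limsup_{h\downarrow0}g_h(s)\le\Xi(z_s)$ by the definition~\eqref{Xi definition}. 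A reverse-Fatou argument then yields $\limsup_{\Pi\to0}\sum_k h\,g_h(t_k)\le\int_0^{t\wedge\xi_n}\Xi(z_s)\,ds<\infty$. I expect this to be the only genuinely delicate step: since $\Xi$ is defined through a $\limsup$ and not a limit (because $z_s\notin\mathcal{D}(A)$ in general), the pointwise estimate $\|P^A_{h}z_s\|^2-\|z_s\|^2\le h\,\Xi(z_s)+o(h)$ is not available uniformly in $s$, so the passage from the Riemann sums to $\int_0^{t\wedge\xi_n}\Xi(z_s)\,ds$ must go through a domination/upper-semicontinuity argument, which is exactly what the hypothesis $\|P^A_t\|\le e^{t\omega_0}$ supplies. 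All the remaining estimates are those already carried out in the proof of Proposition~\ref{zero}.
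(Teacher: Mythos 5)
Your proposal is correct and follows essentially the same route as the paper: starting from the nonlinear mild form \eqref{zeq2}, telescoping $\|z_{t_{k+1}}\|^2-\|z_{t_k}\|^2$ over a partition, isolating the term $\sum_k(\|P^A_h z_{t_k}\|^2-\|z_{t_k}\|^2)$ and dominating it via the growth bound $\|P^A_t\|\le e^{t\omega_0}$ together with the reverse Fatou lemma to produce $\int\Xi(z_r)\,dr$, and then using the orthogonality $\langle z_t,\varphi'_{\beta_t}\rangle=0$ from Proposition~\ref{zero}, $B(t)^*B(t)=\mathrm{Id}$, and the identity $\langle z_t,\varphi''_{\beta_t}\rangle=\|\varphi'_0\|^2-\gamma(\beta_t,v^0_t)$ to collapse the $\varepsilon^2$-contributions, exactly as in \eqref{ineq conc1}--\eqref{ineq conc3}. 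The paper packages the cell-by-cell step as an It\^o formula for $P^A_{T-r}z_r$ (mirroring \eqref{taking T=t}) rather than expanding $\|P^A_hz_{t_k}+R_k\|^2$ directly, but this is a cosmetic difference; the key observation --- that the functional $\Xi$ and reverse Fatou replace the spectral decomposition of Lemma~\ref{U decomp}, so that Assumption~\ref{assump A}(ii) is not needed --- is identical in both.
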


\subsection{Proofs}

In order to prove the results of Section \ref{sec:local stability}, we will need the following lemmas.
\begin{lemma}\label{lem variation of parameters bound}
There exists a constant $\cK$ such that for all $\alpha_1,\alpha_2 \in \R$ and $h\geq 0$,
\[
\norm{U_{\alpha_1 + \alpha_2}(h) - U_{\alpha_2}(h)} \leq \cK \alpha_1 h,
\]
(recall that $(U_\alpha(t))_{t\geq0}$ is the semigroup generated by $\mathcal{L}_\alpha$ given by \eqref{L alpha}).
\end{lemma}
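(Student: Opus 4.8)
The plan is to exploit that the generators $\mathcal{L}_{\alpha_1+\alpha_2} = A + f'(\varphi_{\alpha_1+\alpha_2})$ and $\mathcal{L}_{\alpha_2} = A + f'(\varphi_{\alpha_2})$ differ only by the \emph{bounded} operator $D := f'(\varphi_{\alpha_1+\alpha_2}) - f'(\varphi_{\alpha_2}) \in L(H,H)$, and then to apply the variation-of-constants (Duhamel) identity for the associated semigroups. Concretely, for $x \in \mathcal{D}(A)$ the map $s \mapsto U_{\alpha_1+\alpha_2}(h-s)U_{\alpha_2}(s)x$ is differentiable on $[0,h]$ with derivative $-U_{\alpha_1+\alpha_2}(h-s)\,D\,U_{\alpha_2}(s)x$; integrating in $s$ and extending from $\mathcal{D}(A)$ to all of $H$ by density yields the operator identity
\[
U_{\alpha_1+\alpha_2}(h) - U_{\alpha_2}(h) = \int_0^h U_{\alpha_1+\alpha_2}(h-s)\,D\,U_{\alpha_2}(s)\,ds .
\]
This is standard bounded-perturbation semigroup theory (see \cite[Chapter III]{engel-nagel:01}), so I would only sketch the differentiability and density steps.

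Next I would bound the three factors of the integrand uniformly in $\alpha_1,\alpha_2,s,h$. For $D$: by Assumption \ref{assump varphi}(ii) the map $\alpha \mapsto \varphi_\alpha$ is $C^1$ into $H$ with $\|\varphi'_r\| = \|\varphi'_0\|$ for every $r$, hence $\|\varphi_{\alpha_1+\alpha_2} - \varphi_{\alpha_2}\| = \big\|\int_{\alpha_2}^{\alpha_1+\alpha_2}\varphi'_r\,dr\big\| \leq |\alpha_1|\,\|\varphi'_0\|$; then, writing $\varphi_{\alpha_1+\alpha_2} = (\varphi_{\alpha_1+\alpha_2}-\varphi_{\alpha_2}) + \varphi_{\alpha_2}$ and applying the global Lipschitz property of $v \mapsto f'(v+\varphi_{\alpha_2})$ from Assumption \ref{assump f}(iii) (with Lipschitz constant $c$) at the arguments $\varphi_{\alpha_1+\alpha_2}-\varphi_{\alpha_2}$ and $0$, one gets $\|D\|_{L(H,H)} \leq c\,\|\varphi_{\alpha_1+\alpha_2}-\varphi_{\alpha_2}\| \leq c\,\|\varphi'_0\|\,|\alpha_1|$. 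For the two semigroup factors I would invoke Lemma \ref{U decomp}: since $U_\alpha(t) = P_\alpha + V_\alpha(t)$ with $\|V_\alpha(t)\| \leq e^{-bt} \leq 1$ for $t\geq0$ uniformly in $\alpha$, and the projections $P_\alpha$ onto $\mathrm{span}\{\varphi'_\alpha\}$ are uniformly bounded in $\alpha$, we have $K_0 := \sup_{\alpha\in\R,\,t\geq0}\|U_\alpha(t)\| \leq \sup_\alpha\|P_\alpha\| + 1 < \infty$.

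Combining these estimates, the integrand has operator norm at most $K_0 \cdot c\|\varphi'_0\|\,|\alpha_1| \cdot K_0$, so integrating over $s\in[0,h]$ gives $\|U_{\alpha_1+\alpha_2}(h) - U_{\alpha_2}(h)\| \leq K_0^2\, c\,\|\varphi'_0\|\,|\alpha_1|\,h$, and one takes $\cK := K_0^2\, c\,\|\varphi'_0\|$. (The left-hand side is symmetric under swapping $\alpha_1+\alpha_2$ and $\alpha_2$, so the bound with $|\alpha_1|$ is the intended reading of the displayed inequality in the statement.)

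I expect the only genuinely subtle point to be the uniform-in-$t$ boundedness of the semigroups $U_\alpha(\cdot)$ that is needed to obtain a constant $\cK$ valid for \emph{all} $h\geq0$: the generic bounded-perturbation bound only yields $\|U_\alpha(t)\| \leq Me^{\omega t}$, which would degrade the conclusion to $\cK\,|\alpha_1|\,h\,e^{\omega h}$. It is precisely the spectral gap of Assumption \ref{assump A}(ii), through the decomposition of Lemma \ref{U decomp}, that upgrades this to a genuine uniform constant; I would therefore make sure to record (from the construction of $P_\alpha$ as the spectral projection associated with the simple eigenvalue $0$, whose eigendata have $\alpha$-independent norms) that $\sup_\alpha\|P_\alpha\|<\infty$. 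The remaining ingredients — the Duhamel identity and the Lipschitz/$C^1$ estimates — are routine.
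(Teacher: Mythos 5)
Your proof is correct and follows essentially the same route as the paper's: the Duhamel (variation-of-constants) identity for the two semigroups whose generators differ by the bounded operator $f'(\varphi_{\alpha_1+\alpha_2})-f'(\varphi_{\alpha_2})$, the Lipschitz estimate $\|f'(\varphi_{\alpha_1+\alpha_2})-f'(\varphi_{\alpha_2})\|\le c\|\varphi'_0\||\alpha_1|$, and a uniform-in-$(\alpha,t)$ bound on $\|U_\alpha(t)\|$. The only difference is cosmetic: the paper first replaces $(\alpha_1+\alpha_2,\alpha_2)$ by $(\alpha_1,0)$ and simply asserts $\|U_\alpha(t)\|\le 1$, whereas you work with the original pair directly and spell out where the uniform semigroup bound comes from (Lemma \ref{U decomp} plus a uniform bound on $\|P_\alpha\|$), which is a cleaner and more complete justification of the same argument.
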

\begin{proof}
Note that $\norm{U_{\alpha_1 + \alpha_2}(h) - U_{\alpha_2}(h)} \leq \norm{U_{\alpha_1}(h) - U_{0}(h)}$, since  $\norm{U_{\alpha}(t)} \leq 1$ for all $\alpha\in\R$. The operator $(\cL_{\alpha_1} - \cL_0) = f'(\varphi_{\alpha_1}) - f'(\varphi_0)$ is bounded over its domain $E$ by assumption. We may therefore use the variation of parameters formula \cite[Page 161]{engel-nagel:01} to write for any $v\in H$
\[
(U_{\alpha_1}(h) - U_0(h))v = \int_0^h U_0(h-r) \left( f'(\varphi_{\alpha_1}) - f'(\varphi_0)\right)U_{\alpha_1}(r)v dr.
\]
The result now follows from the Lipschitz property of $f'$ and $\alpha\mapsto \varphi_\alpha$, as well as the fact that $\norm{U_{\alpha}(t)} \leq 1$ for all $\alpha\in\R$.
\end{proof}

\begin{lemma}\label{lem:G bound}
For $G$ defined by \eqref{G}, it holds that
\[
\|G(z, \beta, \beta)\| \leq \frac{c}{2}\|z\|^2, \quad \forall z\in H, \beta\in\R,
\]
where $c$ is the Lipschitz constant of $f'$ (which is independent of $z$ and $\beta$).
\end{lemma}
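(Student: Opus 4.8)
The plan is to recognize $G(z,\beta,\beta) = f(z+\varphi_\beta) - f(\varphi_\beta) - f'(\varphi_\beta)z$ as a first-order Taylor remainder around $\varphi_\beta$ and to estimate it directly using the Lipschitz continuity of $f'$ granted by Assumption \ref{assump f} (iii). Concretely, I would fix $z\in H$ and $\beta\in\R$ and introduce the auxiliary curve $g:[0,1]\to H$, $g(s):= f(\varphi_\beta + sz)$, noting that $\varphi_\beta + sz\in E$ for all $s\in[0,1]$ since $\varphi_\beta - \varphi_0\in H$.

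The first (and only genuinely delicate) step is to establish the integral representation
\[
f(z+\varphi_\beta) - f(\varphi_\beta) \;=\; g(1)-g(0) \;=\; \int_0^1 f'(\varphi_\beta + sz)\,z\,ds .
\]
By Assumption \ref{assump f} (i), $g$ has at each $s$ the derivative $g'(s) = f'(\varphi_\beta + sz)z$; by Assumption \ref{assump f} (iii) the map $s\mapsto f'(\varphi_\beta + sz)$ is Lipschitz from $[0,1]$ into $L(H,H)$, so $s\mapsto g'(s)$ is continuous, and the fundamental theorem of calculus for $H$-valued functions applies. The point requiring a little care is that Assumption \ref{assump f} (i) is phrased as a directional (Gâteaux-type) limit; to upgrade this to honest differentiability of $g$ on $[0,1]$ one uses that $f'(u)$ is a \emph{bounded} linear operator depending Lipschitz-continuously on $u\in E$, which makes the relevant difference quotients converge uniformly in $s$.

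Granting this, since $f'(\varphi_\beta)z = \int_0^1 f'(\varphi_\beta)z\,ds$, subtracting gives
\[
G(z,\beta,\beta) \;=\; \int_0^1 \bigl[\, f'(\varphi_\beta + sz) - f'(\varphi_\beta)\,\bigr]\,z\,ds ,
\]
and then Assumption \ref{assump f} (iii) yields $\norm{f'(\varphi_\beta + sz) - f'(\varphi_\beta)}_{L(H,H)} \le c\,\norm{sz} = c s\norm{z}$, where $c$ is the Lipschitz constant of $f'$ on $E$ (independent of $\beta$, since $\varphi_\beta + H = E$ for every $\beta$). Hence
\[
\norm{G(z,\beta,\beta)} \;\le\; \int_0^1 c s\norm{z}\cdot\norm{z}\,ds \;=\; \frac{c}{2}\norm{z}^2 ,
\]
which is the assertion. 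The routine estimates are trivial; essentially all of the work is in the justification of the integral (mean-value) representation in the Banach-space setting, so that is the step I would write out most carefully.
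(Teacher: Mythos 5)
Your proposal is correct and follows essentially the same route as the paper's proof: both write $f(z+\varphi_\beta)-f(\varphi_\beta)=\int_0^1 f'(\varphi_\beta+sz)\,z\,ds$, subtract $f'(\varphi_\beta)z=\int_0^1 f'(\varphi_\beta)z\,ds$, and apply the Lipschitz bound on $f'$ under the integral to obtain the factor $\int_0^1 s\,ds = \tfrac12$. Your extra care in justifying the fundamental theorem of calculus step (upgrading the directional derivative of Assumption \ref{assump f} (i) to genuine differentiability of $s\mapsto f(\varphi_\beta+sz)$ via boundedness and Lipschitz continuity of $f'$) is a reasonable elaboration of what the paper leaves implicit, but it does not change the argument.
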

\begin{proof}
We first note (by Assumption \ref{assump f} (ii) on $f$) that we may write
\[
f(z +\varphi_{\beta}) - f(\varphi_{\beta}) = \int_0^1f'(\theta z +\varphi_{\beta})zd\theta.
\]
Therefore
\[
G(z, \beta, \beta) = \int_0^1[f'(\theta z +\varphi_{\beta}) - f'(\varphi_{\beta})]zd\theta, 
\]
so that
\[
\|G(z, \beta, \beta)\| \leq \int_0^1\|f'(\theta z +\varphi_{\beta}) - f'(\varphi_{\beta})\|d\theta \|z\| \leq c\int_0^1\theta d\theta \|z\|^2, 
\]
where $c$ is the Lipschitz constant of $f'$.
\end{proof}

We can now prove Theorem \ref{thm: ineq}.

\begin{proof}[Proof of Theorem \ref{thm: ineq}.]
Suppose that $s\leq t \leq T < \tau_\infty$.  We have that the mild solution to \eqref{zeq} is given by 
\begin{equation}
\label{z mild}
z_t = U_\alpha(t-s)z_s + \int_s^tU_\alpha(t - r)G(z_r, \beta_r, \alpha)dr + \varepsilon\int_s^tU_\alpha(t-r)B(r)dW_r^Q - \int_s^tU_\alpha(t-r)d\varphi_{\beta_r}.
\end{equation}
Applying $U_{\alpha}(T-t)$ on both sides of \eqref{z mild} and using the SDE \eqref{beta} governing the behavior of $(\beta_t)_{t\in[0, \tau_\infty)}$, we see that
\begin{align*}
U_{\alpha}(T-t)z_t &= U_{\alpha}(T-s)z_s+ \int_s^t U_{\alpha}(T-r)\kappa_1(r,\beta_r,v^0_r)dW^Q_r\\ 
&\qquad + \int_s^t U_{\alpha}(T-r)\kappa_2(z_r, v^0_r, \beta_r, \alpha)dr,
\end{align*}
where for notational purposes we have set  $\kappa_1(r, \beta_r, v^0_r) := \varepsilon B(r) - \varphi'_{\beta_r}\sigma(r, \beta_r, v^0_r)$ and 
\[
\kappa_2(z_r, v^0_r, \beta_r, \alpha) := {G}(z_r, \beta_r, \alpha) - \varphi'_{\beta_r}\mu(r, \beta_r,v^0_r) - \frac{\varepsilon^2}{2}\frac{\langle \varphi'_{\beta_r},Q\varphi'_{\beta_r}\rangle}{\gamma(\beta_r,v^0_r)^2}\varphi''_{\beta_r}, \quad r\in[0, \tau_\infty),
\]
where we recall that $\sigma(r, \beta_r, v^0_r)$ is defined in \eqref{sigma}.
Let $Y_r = U_{\alpha}(T-r)z_r$ for any $r\in[0, T]$. Then it follows from Ito's Lemma (see \cite[Theorem 4.17]{D-Z}) that
\begin{multline*}
\norm{Y_t}^2 = \norm{Y_s}^2+ 2\int_s^t \langle Y_r,U_{\alpha}(T-r)\kappa_1(r,\beta_r,v^0_r)dW^Q_r\rangle \\ + 2\int_s^t \langle Y_r,U_{\alpha}(T-r)\kappa_2(z_r, v^0_r, \beta_r,\alpha)\rangle dr \\ + \int_s^t  \mathrm{Tr}\left( U_{\alpha}(T-r)\kappa_1(r, \beta_r,v^0_r)Q\kappa_1(r, \beta_r,v^0_r)^*U^*_{\alpha}(T-r)\right)dr.
\end{multline*}
Now taking $T = t$, and choosing $\alpha = \beta_t$ this yields
\begin{multline}
\label{taking T=t}
\norm{z_t}^2 = \norm{U_{\beta_t}(t-s)z_s}^2+ 2\int_s^t \langle U_{\beta_t}(t-r)z_r,U_{\beta_t}(t-r)\kappa_1(r,\beta_r,v^0_r)dW^Q_r\rangle \\ + 2\int_s^t \langle U_{\beta_t}(t-r)z_r,U_{\beta_t}(t-r)\kappa_2(z_r, v^0_r, \beta_r,\beta_t)\rangle dr \\ + \int_s^t  \mathrm{Tr}\left( U_{\beta_t}(t-r)\kappa_1(r, \beta_r,v^0_r)Q\kappa_1(r, \beta_r,v^0_r)^*U^*_{\beta_t}(t-r)\right)dr.
\end{multline}
Now take a partition $(t_k)_{k=0}^M$ of points between $[s,t]$, with $t_k-t_{k-1} = h$ for some $h>0$. Applying the above formula repeatedly, we find that
\begin{multline}
\norm{z_{t}}^2 -\norm{z_{s}}^2= \sum_{k=1}^M \norm{U_{\beta_{t_k}}(h)z_{t_{k-1}}}^2-\norm{z_{t_{k-1}}}^2 \\ 
+ 2\sum_{k=1}^M \int_{t_{k-1}}^{t_k} \left\langle U_{\beta_{t_k}}(t_k-r)z_r,U_{\beta_{t_k}}(t_k-r)\kappa_2(z_r, v^0_r, \beta_r,\beta_{t_k})\right\rangle dr \\ 
+ \sum_{k=1}^M\int_{t_{k-1}}^{t_k}  \mathrm{Tr}\left( U_{\beta_{t_k}}(h)\kappa_1(r, \beta_r,v^0_r)Q\kappa_1(r, \beta_r,v^0_r)^*U^*_{\beta_{t_k}}(h)\right)dr \\
+2\sum_{k=1}^M \int_{t_{k-1}}^{t_k} \langle U_{\beta_{t_k}}(t_k-r)z_r,U_{\beta_{t_k}}(t_k-r)\kappa_1(r,\beta_r,v^0_r)dW^Q_r\rangle.\label{equation zT squared temp 1}
\end{multline}
Note that the potential unboundedness of the generator of $U_\alpha$ makes things a little more difficult. The aim is to deduce from \eqref{equation zT squared temp 1} that 
\begin{align}
\norm{z_{t}}^2 &\leq \norm{z_{s}}^2 - b\int_{s}^{t}\norm{z_r}^2 dr+2\int_{s}^{t} \langle z_r,\kappa_1(r,\beta_r,v^0_r)dW^Q_r\rangle   \nonumber \\ 
&\quad + \int_{s}^{t} \Big[2\left\langle z_r, \kappa_2(z_r, v^0_r, \beta_r,\beta_r) \right\rangle +  \mathrm{Tr}\left( \kappa_1(r, \beta_r,v^0_r)Q\kappa_1(r, \beta_r,v^0_r)^*\right) \Big]dr,\label{z squared b bound 1}
\end{align}
where $b>0$ is as in Lemma \ref{U decomp}.  In order to prove this claim we treat each term in \eqref{equation zT squared temp 1} separately.

\vspace{0.2cm}
\noindent\textit{First term:}  We firstly claim that (noting the dependence of $M$ on $h$)
\begin{equation}
\lim_{h\to 0} \sum_{k=1}^M \left(\norm{U_{\beta_{t_k}}(h)z_{t_{k-1}}}^2- \sum_{k=1}^M \norm{U_{\beta_{t_{k-1}}}(h)z_{t_{k-1}}}^2\right) =0.\label{eqn norms equal 1}
\end{equation}
Indeed, using the reverse triangle inequality, the fact that $\norm{U_\alpha(t)}\leq 1$ and Lemma \ref{lem variation of parameters bound}, by setting  $\bar{\cK} = \sup_{r\in [s,t]}\norm{z_r}$ we see that
\begin{align*}
\left| \sum_{k=1}^M \norm{U_{\beta_{t_k}}(h)z_{t_{k-1}}}^2 -  \norm{U_{\beta_{t_{k-1}}}(h)z_{t_{k-1}}}^2 \right| &\leq 2\bar{\cK} \sum_{k=1}^M \norm{(U_{\beta_{t_k}}(h)-U_{\beta_{t_{k-1}}}(h))z_{t_{k-1}}}\\  
&\leq 2(t-s)\cK\bar{\cK}\sup_{r_1,r_2\in [s,t]: |r_1 - r_2|\leq h}|\beta_{r_2} - \beta_{r_1}|,
\end{align*} 
which converges to $0$ as $h\to0$ by the continuity of $(\beta_r)_{r\in[0, \tau_\infty)}$.  Therefore
\begin{align*}
&\limsup_{h\to 0} \sum_{k=1}^M \left[\norm{U_{\beta_{t_k}}(h)z_{t_{k-1}}}^2 - \|z_{t_{k-1}}\|^2\right] = \limsup_{h\to 0} \sum_{k=1}^M \left[\norm{U_{\beta_{t_{k-1}}}(h)z_{t_{k-1}}}^2 - \|z_{t_{k-1}}\|^2\right] \\
&\qquad\qquad\qquad\qquad\qquad\leq \lim_{h\to 0} \sum_{k=1}^M \left[e^{-bh} - 1\right]\|z_{t_{k-1}}\|^2 \leq -b\int_s^t\|z_{r}\|^2dr,
\end{align*}
where the second line follows from Lemma \ref{U decomp} and the fact that by Proposition \ref{zero} $\langle z_{r}, \varphi_{\beta_{r}}\rangle =0$ for all $r\in[0, \tau_\infty)$.

\vspace{0.2cm}
\noindent\textit{Second term:} 
We have that
\begin{align*}
&2\sum_{k=1}^M \int_{t_{k-1}}^{t_k} \left\langle U_{\beta_{t_k}}(t_k-r)z_r,U_{\beta_{t_k}}(t_k-r)\kappa_2(z_r, v^0_r, \beta_r,\beta_{t_k})\right\rangle dr  \\
&= 2\int_{s}^{t} \left\langle U_{\beta_{k(r)}}(k(r)-r)z_r,U_{\beta_{k(r)}}(k(r)-r)\kappa_2(z_r, v^0_r, \beta_r,\beta_{k(r)})\right\rangle dr
\end{align*}
where $k(r) := t_k$ if $r\in(t_{k-1}, t_k]$ for $k\in\{1, \dots, M\}$.  Since it holds that $\|U_{\beta_{k(r)}}(k(r) - r)v - v\| \to 0$ as $h\to0$ for any $v\in H$ and $r\in(\tau_{k-1}, \tau_k]$ by Lemma \ref{lem variation of parameters bound}), we see that by the dominated convergence theorem 
\begin{align*}
&2\sum_{k=1}^M \int_{t_{k-1}}^{t_k} \left\langle U_{\beta_{t_k}}(t_k-r)z_r,U_{\beta_{t_k}}(t_k-r)\kappa_2(z_r,  v^0_r, \beta_r,\beta_{t_k})\right\rangle dr  \to 2\int_{s}^{t} \left\langle z_r,\kappa_2(z_r, v^0_r, \beta_r,\beta_r)\right\rangle dr,
\end{align*}
as $h\to0$.

\vspace{0.2cm}
\noindent\textit{Third term:} Similarly to the second term, we have
\begin{align*}
&\sum_{k=1}^M\int_{t_{k-1}}^{t_k}  \mathrm{Tr}\left( U_{\beta_{t_k}}(h)\kappa_1(r, \beta_r,v^0_r)Q\kappa_1(r, \beta_r,v^0_r)^*U^*_{\beta_{t_k}}(h)\right)dr \\
&\quad \to  \int_{s}^{t}  \mathrm{Tr}\left(\kappa_1(r, \beta_r,v^0_r)Q\kappa_1(r, \beta_r,v^0_r)^*\right)dr
\end{align*}
as $h\to0$.

\vspace{0.2cm}
\noindent\textit{Fourth term:}
For the final term in \eqref{equation zT squared temp 1}, observe that
\begin{align*}
&2\sum_{k=1}^M \int_{t_{k-1}}^{t_k} \langle U_{\beta_{t_k}}(t_k-r)z_r,U_{\beta_{t_k}}(t_k-r)\kappa_1(r,\beta_r,v^0_r)dW^Q_r\rangle \\
&\quad= 2 \int_{s}^{t} \langle z_r,\kappa_1(r, \beta_r,v^0_r)dW^Q_r\rangle + 2 \int_{s}^{t} \Big\langle J(k(r), r)z_r,\kappa_1(r,\beta_r,v^0_r)dW^Q_s\Big\rangle,
\end{align*}
where $k(r)$ is defined as in the bound for the second term above, and for notational purposes we have set
$J(k(r), r) := U^*_{\beta_{k(r)}}(k(r)-r)U_{\beta_{k(r)}}(k(r)-r)-I$.
Now
\begin{multline}
\E\left[\left( \int_{s}^{t} \Big\langle J(k(r), r)z_r,\kappa_1(r,\beta_r,v^0_r)dW^Q_r\Big\rangle\right)^2 \right] \\
= \int_{s}^{t} \Big\langle J(k(r), r)z_r, \kappa_1(r,\beta_r,v^0_r)Q\kappa_1^*(r,\beta_r,v^0_r)J(k(r), r)z_r \Big\rangle dr.
\end{multline}
This goes to zero as $h\to 0$ through the dominated convergence theorem, so that we conclude that
\begin{align*}
&2\sum_{k=1}^M \int_{t_{k-1}}^{t_k} \langle U_{\beta_{t_k}}(t_k-r)z_r,U_{\beta_{t_k}}(t_k-r)\kappa_1(r,\beta_r,v^0_r)dW^Q_r\rangle \to 2 \int_{s}^{t} \langle z_r,\kappa_1(r, \beta_r,v^0_r)dW^Q_r\rangle,
\end{align*}
almost surely as $h\to 0$.

\vspace{0.2cm}
\noindent\textit{Conclusion:}  Using the above calculations, we can thus see that by taking the limit as $h\to0$ in \eqref{equation zT squared temp 1}, \eqref{z squared b bound 1} holds almost surely.  It remains to deduce the required inequality from \eqref{z squared b bound 1}.

Firstly we can note that since $\langle z_{r}, \varphi'_{\beta_{r}}\rangle =0$ for all $r\in[0, \tau_\infty)$ we have by definition of $\kappa_1$ and $\kappa_2$ that
\begin{align}
\label{ineq conc1}
 \langle z_r,\kappa_1(r,\beta_r,v^0_r)dW^Q_r\rangle = \varepsilon\langle z_r,B(r)dW^Q_r\rangle,
\end{align}
and
\begin{align}
\label{ineq conc2}
\langle z_r, \kappa_2(z_r, v^0_r, \beta_r,\beta_r) \rangle = \langle z_r, G(z_r,\beta_r,\beta_r) \rangle - \frac{\varepsilon^2}{2}\frac{\langle \varphi'_{\beta_r},Q\varphi'_{\beta_r}\rangle}{\gamma(\beta_r,v^0_r)^2}\langle z_r,\varphi''_{\beta_r}\rangle.
\end{align}
Moreover, we can calculate (using the assumption that $B^*(r)B(r) = \mathbf{Id}$)
\begin{align}
\label{ineq conc3}
&{\rm Tr}\left( \kappa_1(r,\beta_r,v^0_r)Q\kappa_1(r,\beta_r,v^0_r)^*\right)  
& =\varepsilon^2\mathrm{Tr}(Q) + \varepsilon^2\frac{\langle Q\varphi'_{\beta_r} , \varphi'_{\beta_r}\rangle}{\gamma(\beta_r, v_r^0)^2}\left(\|\varphi'_0\|^2 - 2 \gamma(\beta_r, v_r^0)\right).
\end{align}
Substituting these three observations into \eqref{z squared b bound 1} then yields the result.
\end{proof}

\begin{proof}[Proof of Corollary \ref{cor:ineq}]
By a simple application of It\^o's formula to $e^{bt/2}\|z_t\|^2$, thanks to Theorem \ref{thm: ineq} for any $t<\tau_\infty$, we have
\begin{align*}
\norm{z_t}^2 &\leq e^{-\frac{b}{2}t}\norm{z_0}^2 - \frac{b}{2}\int_0^te^{-\frac{b}{2}(t-r)}\|z_r\|^2dr + 2\varepsilon\int_{0}^{t} e^{-\frac{b}{2}(t-r)}\langle z_r,B(r)dW^Q_r\rangle   \\
&\qquad + 2\int_{0}^{t}e^{-\frac{b}{2}(t-r)}\left\langle z_r, G(z_r, \beta_r,\beta_r) \right\rangle dr + 2b^{-1}\varepsilon^2\mathrm{Tr}(Q)(1 - e^{-\frac{b}{2}t}),
\end{align*}
Thus by Lemma \ref{lem:G bound},
and by the definition of $\rho_\varepsilon$, it follows that for $t<\tau_\infty\wedge\rho_\varepsilon$
\begin{align}
\label{z bound 1}
\norm{z_t}^2 &\leq e^{-\frac{b}{2}t}\norm{z_0}^2 - \frac{b}{2}\int_0^te^{-\frac{b}{2}(t-r)}\|z_r\|^2dr + \varepsilon^\frac{1}{2} e^{-\frac{b}{2}t} \nonumber\\
& \qquad + c\int_{0}^{t}e^{-\frac{b}{2}(t-r)}\|z_r\|^3 dr  + 2b^{-1}\varepsilon^2\mathrm{Tr}(Q)(1 - e^{-\frac{b}{2}t}).
\end{align}
Define $\tilde{\rho} := \inf\{t>0: \|z_t\| \geq {b}/{4c}\}$, so that $\tilde{\rho} >0$ a.s. by our assumption \eqref{small initial1}.
Then for 
$t<\tau_\infty\wedge\rho_\varepsilon\wedge \tilde{\rho}$ it holds that
\begin{align*}
\norm{z_t}^2 &\leq e^{-\frac{b}{2}t}\norm{z_0}^2 + \varepsilon^\frac{1}{2} e^{-\frac{b}{2}t} + \frac{1}{2}\sup_{r\leq t}\|z_r\|^2(1 - e^{-\frac{b}{2}t})   + 2b^{-1}\varepsilon^2\mathrm{Tr}(Q)(1 - e^{-\frac{b}{2}t}).
\end{align*}
This implies that 
\begin{equation}
\label{sup bound z}
\sup_{r\leq t}\|z_r\|^2 \leq 2\left(\norm{z_0}^2 + \varepsilon^\frac{1}{2}  + 2b^{-1}\varepsilon^2\mathrm{Tr}(Q)\right) <  \frac{b^2}{16c^2}
\end{equation}
for all $t<\tau_\infty\wedge\rho_\varepsilon\wedge \tilde{\rho}$ by the assumption $\eqref{small initial1}$.  
Then we must have that $\tau_\infty\wedge\rho_\varepsilon < \tilde{\rho}$, so that \eqref{sup bound z} holds for all $t<\tau_\infty\wedge\rho_\varepsilon $. Returning to \eqref{z bound 1}, we thus see that
\begin{align*}
\norm{z_t}^2 &\leq e^{-\frac{b}{2}t}\norm{z_0}^2 - \frac{b}{4}\int_0^te^{-\frac{b}{2}(t-r)}\|z_r\|^2dr + \varepsilon^\frac{1}{2} e^{-\frac{b}{2}t}  + 2b^{-1}\varepsilon^2\mathrm{Tr}(Q)(1 - e^{-\frac{b}{2}t})\\
&\leq  e^{-\frac{b}{2}t}(\norm{z_0}^2 + \varepsilon^\frac{1}{2})  + 2b^{-1}\varepsilon^2\mathrm{Tr}(Q)(1 - e^{-\frac{b}{2}t}),
\end{align*}
for all $t<\tau_\infty\wedge\rho_\varepsilon $.

Finally, again by the assumption \eqref{small initial1}, it follows that $\norm{z_t}^2 \leq \|\varphi'_0\|^2/2\|\varphi''_0\|$
for all $t<\tau_\infty\wedge\rho_\varepsilon$.  The point is then that on $[0, \tau_\infty\wedge\rho_\varepsilon)$ we have by definition of $\gamma$ (see \eqref{gamma}) that
\begin{align*}
\gamma(\beta_t, v^0_t) &= -\langle u_t, \varphi''_{\beta_t}\rangle = \|\varphi'_0\|^2 -\langle z_t, \varphi''_{\beta_t}\rangle \geq \|\varphi'_0\|^2 -\|z_t\|\|\varphi''_{0}\| \geq \frac{\|\varphi'_0\|^2}{2},
\end{align*}
so that $\tau_\infty > \rho_\varepsilon$, recalling that by definition $\tau_\infty$ is the first time that $\gamma(\beta_t, v^0_t)=0$.

In conclusion, we have that under the assumption \eqref{small initial1} it holds that  $\tau_\infty > \rho_\varepsilon$ and
\begin{align*}
\norm{z_t}^2 &\leq  e^{-\frac{b}{2}t}(\norm{z_0}^2 + \varepsilon^\frac{1}{2})  + 2b^{-1}\varepsilon^2\mathrm{Tr}(Q)(1 - e^{-\frac{b}{2}t})
\end{align*}
for all $t\in[0, \rho_\varepsilon]$.
\end{proof}

We can finally prove  Theorem \ref{thm: ineq better}.

\begin{proof}[Proof of Theorem \ref{thm: ineq better}.]
The proof is very similar to that of Theorem \ref{thm: ineq} but this time we consider $(z_t)_{t\in[0, T]}$ for $T<\tau_\infty$ as a mild solution to \eqref{zeq2} i.e.
\begin{equation}
\label{z mild better}
z_t = P^A_{t-s}z_s + \int_s^tP^A_{t-r}\left(f(z_r + \varphi_{\beta_r}) -f(\varphi_{\beta_r})\right)dr + \varepsilon\int_s^tP^A_{t-r}B(r)dW_r^Q - \int_s^tP^A_{t-r}d\varphi_{\beta_r},
\end{equation}
for all $s\leq t \leq T$.  
In a very similar way to the derivation of \eqref{taking T=t} in the proof of Theorem \ref{thm: ineq}, we see that
\begin{multline*}
\norm{z_t}^2 = \norm{P^A_{t -s}z_s}^2+ 2\int_s^t \langle P^A_{t -r}z_r,P^A_{t -r}\left(f(z_r + \varphi_{\beta_r}) -f(\varphi_{\beta_r})\right)  dr\\
+ 2\int_s^t \langle P^A_{t -r}z_r,P^A_{t -r}\kappa_1(r,\beta_r,v^0_r)dW^Q_r\rangle + 2\int_s^t \langle P^A_{t -r}z_r,P^A_{t -r}\kappa_3(z_r, v^0_r, \beta_r)\rangle dr \\ + \int_s^t  \mathrm{Tr}\left( P^A_{t -r}\kappa_1(r, \beta_r,v^0_r)Q\kappa_1(r, \beta_r,v^0_r)^*(P^A_{t -r})^*\right)dr.
\end{multline*}
where $\kappa_1(r, \beta_r, v^0_r) := \varepsilon B(r) - \varphi'_{\beta_r}\sigma(r, \beta_r, v^0_r)$ as in the proof of Theorem \ref{thm: ineq} and 
\[
\kappa_3(z_r, v^0_r, \beta_r) := - \mu(r, \beta_r,v^0_r)\varphi'_{\beta_r} - \frac{\varepsilon^2}{2}\frac{\langle \varphi'_{\beta_r},Q\varphi'_{\beta_r}\rangle}{\gamma(\beta_r,v^0_r)^2}\varphi''_{\beta_r}.
\]
Again take a partition $(t_k)_{k=0}^M$ of points between $[s,t]$, with $t_k-t_{k-1} = h$ for some $h>0$. Applying the above formula repeatedly, we find that
\begin{multline}
\norm{z_{t}}^2 -\norm{z_{s}}^2 \\
= \sum_{k=1}^M \left(\norm{P^A_{h}z_{t_{k-1}}}^2-\norm{z_{t_{k-1}}}^2+ 2\int_{t_{k-1}}^{t_k} \left\langle P^A_{t_k-r}z_r,P^A_{t_k-r}\left(f(z_r + \varphi_{\beta_r}) -f(\varphi_{\beta_r})\right)\right\rangle dr\right) \\
+ \sum_{k=1}^M \int_{t_{k-1}}^{t_k} \left(2\left\langle P^A_{t_k-r}z_r,P^A_{t_k-r}\kappa_3(z_r, v^0_r, \beta_r)\right\rangle +  \mathrm{Tr}\left( P^A_h\kappa_1(r, \beta_r,v^0_r)Q\kappa_1(r, \beta_r,v^0_r)^*(P^A_h)^*\right)\right)dr \\
+2\sum_{k=1}^M \int_{t_{k-1}}^{t_k} \langle P^A_{t_k-r}z_r,P^A_{t_k-r}\kappa_1(r,\beta_r,v^0_r)dW^Q_r\rangle.\label{equation zT squared temp 1 better}
\end{multline}
Once again the aim is to take the $\limsup$ as $h\to0$ in the above.  The second, third and fourth terms are dealt with in exactly the same way as in the proof of Theorem \ref{thm: ineq}, so it suffices to concentrate on the first term. 

To this end note that
\begin{align*}
&\sum_{k=1}^M \left(\norm{P^A_{h}z_{t_{k-1}}}^2-\norm{z_{t_{k-1}}}^2+ 2\int_{t_{k-1}}^{t_k} \left\langle P^A_{t_k-r}z_r,P^A_{t_k-r}\left(f(z_r + \varphi_{\beta_r}) -f(\varphi_{\beta_r})\right)\right\rangle dr\right) \\
 &= \sum_{k=1}^M \left(\norm{P^A_{h}z_{t_{k-1}}}^2-\norm{z_{t_{k-1}}}^2\right)+ 2\int_{s}^{t} \left\langle P^A_{k(r)-r}z_r,P^A_{k(r)-r}\left(f(z_r + \varphi_{\beta_r}) -f(\varphi_{\beta_r})\right)\right\rangle dr
\end{align*}
where $k(r) := t_k$ if $r\in(t_{k-1}, t_k]$ for $k\in\{1, \dots, M\}$. By the assumption in the theorem that there exists $\omega_0\in\R$ such that $\norm{P^A_t} \leq \exp(t\omega_0)$ for all $t\geq0$, it follows that $\|P^A_hz_{t_{k-1}}\|^2 - \|z_{t_{k-1}}\|^2 \leq [\exp(2\omega_0h) - 1]\|z_{t_{k-1}}\|^2$.  Combining this observation with the reverse Fatou lemma, we see that
\[
\limsup_{h\to 0} \sum_{k=1}^M \left(\norm{P^A_{h}z_{t_{k-1}}}^2-\norm{z_{t_{k-1}}}^2\right) \leq \int_0^T\Xi(z_r)dr.
\]
%
The dominated convergence theorem also implies that as $h\to 0$,
\[
\int_{s}^{t} \left\langle P^A_{k(r)-r}z_r,P^A_{k(r)-r}\left(f(z_r + \varphi_{\beta_r}) -f(\varphi_{\beta_r})\right)\right\rangle dr
\to \int_{s}^{t} \left\langle z_r,f(z_r + \varphi_{\beta_r}) -f(\varphi_{\beta_r})\right\rangle dr.
\]
With this in hand, together with the limits calculated for the second, third and fourth terms of \eqref{equation zT squared temp 1 better} in the proof Theorem \ref{thm: ineq}, we see that taking the $\limsup$ as $h\to0$ in \eqref{equation zT squared temp 1 better} yields 
\begin{align}
\norm{z_{t}}^2 &\leq \norm{z_{s}}^2 + \int_{s}^{t}\Xi(\norm{z_r}) dr +2\int_{s}^{t} \langle z_r,\kappa_1(r,\beta_r,v^0_r)dW^Q_r\rangle   \nonumber \\ 
&\quad +2\int_{s}^{t} \left\langle z_r,f(z_r + \varphi_{\beta_r}) -f(\varphi_{\beta_r})\right\rangle dr\nonumber\\
&\qquad + \int_{s}^{t} \Big[2\left\langle z_r, \kappa_3(z_r,  v^0_r, \beta_r) \right\rangle +  \mathrm{Tr}\left( \kappa_1(r, \beta_r,v^0_r)Q\kappa_1(r, \beta_r,v^0_r)^*\right) \Big]dr \label{z squared b bound 1 better}.
\end{align}
Moreover, we can then use \eqref{ineq conc1}, \eqref{ineq conc3} and the definition of $\kappa_3$ to conclude.
\end{proof}


\section{Long-time behavior}
\label{sec:long-time stability}

Again suppose that $(\varphi_\alpha)_{\alpha\in\R}$, $f$ and $A$ satisfy Assumptions \ref{assump varphi}, \ref{assump f} and \ref{assump A} (i) respectively, and that $(u_t)_{t\geq0} = (v^0_t + \varphi_0)_{t\geq0}$ is the solution to \eqref{main} with (deterministic) initial condition $u_0$ such that $u_0 - \varphi_0\in H$ according to Proposition \ref{K-S}.

Let $t\mapsto \beta^*_t$ be any function on $[0, \infty)$ such that for all $t\geq0$, $\beta^*_t$ is a global minimum of the map $\R\ni\alpha\mapsto m(t, \alpha) = \norm{u_t - \varphi_{\alpha}}^2$.  Note that $\beta^*_t$ exists for all $t\geq0$ by Lemma \ref{beta existence} but it may not be unique. Define
\[
z^*_t := u_t - \varphi_{\beta_t^*}, \quad t\geq0.
\]

The main result of this section is Theorem \ref{Proposition Nonlinear Inequality}, which generalizes the inequality of Theorem \ref{thm: ineq better} to arbitrary time. This theorem is a first step in the long-time analysis of the system. The global stability results of \cite{chen1997existence} lends one hope that, for some traveling wave systems, we might be able to get some sort of long-time bound on $\norm{z^*_t}^2$. In particular, one may infer from \cite[Theorem 3.1]{chen1997existence} that, under some technical assumptions, if $\varepsilon=0$ (i.e. there is no stochastic term), $u_0\in E$ and $u_0$ is continuous, then $\norm{u_t - \varphi_{\beta_t^*}}_\infty\to 0$ (in supremum norm) as $t\to \infty$. Coming back to our stochastic setting with $\varepsilon >0$, this motivates us to wonder if the stabilizing effect of the internal dynamics of the deterministic system could balance the disorder coming from the noise.  In such a case then a long-time bound on $\norm{z^*_t}^2$ might be possible. Unfortunately \cite{chen1997existence} uses the method of comparison of ODE's, and the bounds are not easy to adapt to our semigroup formalism. Nevertheless, the development, in future work, of some bounds on the drift term in Theorem \ref{Proposition Nonlinear Inequality} could facilitate for example a long-time bound on the growth of $\E\left[\norm{z^*_t}^2\right]$ (see also Remark \ref{remark long time application}).

\begin{assump}\label{Assmpt Long Time 1}
For $t \geq 0$, let $Q_t := \int_0^t B(s)^*(P^A_{t-s})^*QP^A_{t-s}B(s)ds$. Assume that for all $t > 0$ and for all $\alpha,\beta \in \R$, $\alpha\neq \beta$,
\begin{equation}
\left\langle \varphi_\alpha - \varphi_{\beta},Q_t(\varphi_\alpha - \varphi_\beta)\right\rangle \neq 0.
\end{equation}
Note that a sufficient condition for this to hold is that $Q$ is strictly positive.
\end{assump}
\begin{assump}\label{Assmpt Long Time 2}
For $\alpha\in\R$ and $t > 0$, define
\[
K^\varphi(\alpha,t) := \left(
\begin{array}{ccc}
\langle \varphi_\alpha',Q_t\varphi_\alpha'\rangle & \langle \varphi_\alpha'',Q_t\varphi_\alpha'\rangle &  \langle \varphi_\alpha''',Q_t\varphi_\alpha'\rangle\\
\langle \varphi_\alpha'',Q_t\varphi_\alpha'\rangle & \langle \varphi_\alpha'',Q_t\varphi_\alpha''\rangle & \langle \varphi_{\alpha}'',Q_t\varphi_\alpha'''\rangle\\
\langle \varphi_\alpha''',Q_t\varphi_\alpha'\rangle & \langle \varphi_{\alpha}'',Q_t\varphi_\alpha'''\rangle &  \langle \varphi_\alpha''',Q_t\varphi_\alpha'''\rangle
\end{array}\right).
\]
Let $O^\varphi(\alpha,t)$ be an orthonormal matrix and $\Lambda^{\varphi}(\alpha,t)$ a diagonal matrix with diagonal entries $(\lambda^{\varphi}_1(\alpha,t),\lambda^\varphi_2(\alpha,t),\lambda^\varphi_3(\alpha,t))$ such that
\begin{equation}\label{eqn O definition}
K^\varphi(\alpha,t) = O^\varphi(\alpha,t)^T\Lambda^\varphi(\alpha,t)O^\varphi(\alpha,t).
\end{equation}
We choose $O^\varphi(\alpha,t)$ and $\Lambda^\varphi(\alpha,t)$ to be continuous in $\alpha$ (for each $t>0$). 
Assume that for each $\alpha\in \R$ and $t>0$, no more than one of $(\lambda^\varphi_i(\alpha, t))_{i=1,2,3}$ is zero. Assume also that $\varphi''''_\alpha\in H$ (the derivative w.r.t. $\alpha$) exists everywhere and its norm is uniformly bounded. 
\end{assump}
We recall the definition of $\Xi(v)$ for $v\in H$ in Theorem \ref{thm: ineq better} as the map $\Xi (v) := \limsup_{h\downarrow 0} \frac{1}{h}\left( \|P^A_hv\|^2 - \|v\|^2 \right)$.

\begin{thm}\label{Proposition Nonlinear Inequality}
Suppose Assumptions \ref{Assmpt Long Time 1} and \ref{Assmpt Long Time 2} are satisfied, and that there exists $\omega_0 \in \R$ such that $\norm{P^A_t} \leq \exp(t\omega_0)$ for all $t\geq0$. Then for all $t>0$, $\beta^*_t$ is almost surely unique. 
Furthermore for any $0\leq s<t <\infty $ it holds almost surely that
\begin{align*}
\norm{z^*_t}^2 &\leq \norm{z^*_s}^2 + \int_s^t\Big(\Xi(z^*_r)+ 2\langle f(\varphi_{\beta_r^*} + z^*_r) - f(\varphi_{\beta_r^*}), z^*_r\rangle +\varepsilon^2 \Big[ \mathrm{Tr}(Q) - \frac{\|Q^\frac{1}{2}\varphi'_{\beta^*_r}\|^2}{\gamma(\beta^*_r, v_r^0)} \Big ] \Big)dr\\
&\qquad +2\varepsilon\int_s^t\langle z^*_r,B(r)dW^Q_r\rangle.
\end{align*}
\end{thm}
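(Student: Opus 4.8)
The plan is to separate a fixed-time statement --- a.s.\ uniqueness and nondegeneracy of $\beta^*_t$ --- from a pathwise statement --- the inequality, obtained by patching the local SDE description of Section \ref{sec:Tracking the wave front} along the jump times of $t\mapsto\beta^*_t$. For the fixed-time part, fix $t>0$, write $u_t=\varphi_0+v^0_t$, and first reduce, by a Girsanov change of measure (possible since $f(\varphi_0+\cdot)-f(\varphi_0)$ is globally Lipschitz and $B(t)$ is unitary; alternatively one may argue directly via the Malliavin calculus that the relevant finite-dimensional projections of $v^0_t$ have absolutely continuous laws), to the case where $v^0_t$ is the Gaussian variable $P^A_tv^0_0+\varepsilon\int_0^tP^A_{t-s}B(s)\,dW^Q_s$, whose law is nondegenerate in the directions recorded by $Q_t$. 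For a \emph{fixed} pair $\alpha\neq\beta$, the identity $m(t,\alpha)=m(t,\beta)$ is a single affine condition on $v^0_t$ along $\varphi_\alpha-\varphi_\beta$, of probability zero by Assumption \ref{Assmpt Long Time 1}; and for a fixed $\alpha$, the system $\partial_\alpha m=\partial^2_\alpha m=\partial^3_\alpha m=0$ at $\alpha$ --- recall $\partial^2_\alpha m(t,\alpha)=2\gamma(\alpha,v^0_t)$ --- is, even in the worst case where exactly one eigenvalue of $K^\varphi(\alpha,t)$ vanishes, a codimension-$2$ condition on the nondegenerate Gaussian, also of probability zero. To pass from these ``for each fixed $\alpha,\beta$'' statements to ``for all $\alpha,\beta$'', I would restrict to a compact interval containing every global minimiser (from Assumption \ref{assump varphi}(iv), cf.\ the proof of Lemma \ref{beta existence}), cover it by $O(\delta^{-1})$ subintervals of length $\delta$, and use the uniform bound on $\|\varphi''''_\alpha\|$ (Assumption \ref{Assmpt Long Time 2}) to make $\partial^j_\alpha m$, $j\le3$, Lipschitz there, so that the probability of any near-degeneracy within $\delta$ of a grid point is $O(\delta^2)$, hence $O(\delta)$ summed over the grid, which vanishes as $\delta\downarrow0$. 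This yields the a.s.\ uniqueness of $\beta^*_t$ and, taking $\alpha=\beta^*_t$ (a local minimiser, so $\gamma(\beta^*_t,v^0_t)\ge0$) and discarding the $\gamma=0$ case by the same count, also $\gamma(\beta^*_t,v^0_t)>0$ a.s.

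The pathwise part hinges on the fact that $\|z^*_t\|^2=\inf_\alpha m(t,\alpha)$ is \emph{continuous} in $t$: it is an infimum of the jointly continuous map $(t,\alpha)\mapsto\|v^0_t+\varphi_0-\varphi_\alpha\|^2$ over a compact, upper-semicontinuously varying set of $\alpha$'s, so although $\beta^*_t$ jumps, $\|z^*_t\|^2$ does not --- at a jump of $\beta^*$ from $a$ to $b$ both $\varphi_a,\varphi_b$ are closest to $u_t$, whence $\|z^*_{t-}\|^2=\|z^*_{t+}\|^2$. Now fix $0\le s<t$ and build $\sigma_0:=s,\sigma_1,\sigma_2,\dots$ recursively: given $\sigma_k$ with $\gamma(\beta^*_{\sigma_k},v^0_{\sigma_k})>0$ (a.s.\ true for $\sigma_0$ by the fixed-time part), let $\beta^{(k)}$ be the solution of \eqref{beta} started at $(\sigma_k,\beta^*_{\sigma_k})$ given by Propositions \ref{existence}--\ref{zero} (the initial condition $\langle u_{\sigma_k}-\varphi_{\beta^*_{\sigma_k}},\varphi'_{\beta^*_{\sigma_k}}\rangle=0$ holds because $\beta^*_{\sigma_k}$ is a local minimiser), and put
\[
\sigma_{k+1}:=\inf\big\{r\in(\sigma_k,\tau^{(k)}_\infty):\ \|u_r-\varphi_{\beta^{(k)}_r}\|^2>\textstyle\inf_\alpha m(r,\alpha)\big\}\wedge\tau^{(k)}_\infty\wedge t,
\]
where $\tau^{(k)}_\infty$ is the explosion time of $\beta^{(k)}$. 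On $[\sigma_k,\sigma_{k+1})$ one has $\|u_r-\varphi_{\beta^{(k)}_r}\|^2=\|z^*_r\|^2$ by construction, while for each fixed $r$ the event $\{\beta^*_r\neq\beta^{(k)}_r,\ r<\sigma_{k+1}\}$ entails a non-unique global minimiser and so has probability zero; by Fubini, a.s.\ $\beta^*_r=\beta^{(k)}_r$ for a.e.\ $r\in[\sigma_k,\sigma_{k+1})$. Applying Theorem \ref{thm: ineq better} to $z_r=u_r-\varphi_{\beta^{(k)}_r}$ on this interval (in integrated form, started at $\sigma_k$) and substituting these identities inside the Lebesgue and stochastic integrals gives exactly the asserted inequality on $[\sigma_k,\sigma_{k+1})$, and it extends to $r=\sigma_{k+1}$ by the continuity noted above (of $\|z^*\|^2$ and of both integrals).

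Summing over $k$ telescopes to the claim with $t$ replaced by $\sigma_K$, for every $K$, so it remains to check that $\sigma_K=t$ for some finite $K$ a.s. The case $\sigma_{k+1}=\tau^{(k)}_\infty$ forces $\beta^{(k)}$ to remain the global minimiser up to $\tau^{(k)}_\infty$, where (by Corollary \ref{non-explosion}, whose flatness hypothesis holds here by the argument of the fixed-time part) $\lim_{r\uparrow\tau^{(k)}_\infty}\beta^{(k)}_r$ would be a global minimiser with $\partial_\alpha m=\partial^2_\alpha m=\partial^3_\alpha m=0$, a probability-zero event; and at a genuine jump time the jumped-to minimiser is a.s.\ nondegenerate, since two coexisting global minima with one of them having $\gamma=0$ is again a probability-zero configuration, so the recursion can always be continued. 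Finally, infinitely many jumps in $[s,t]$ is excluded exactly as in the proof of Corollary \ref{non-explosion}: if the jump sizes stay bounded below along a subsequence, restricting to crossings of a fixed pair of levels produces infinite oscillation of $\|u_\cdot\|^2$ on $[s,t]$, contradicting path continuity, while if the jump sizes shrink to zero this would force a triple-zero degeneracy $\partial_\alpha m=\partial^2_\alpha m=\partial^3_\alpha m=0$ of $m(\theta_\infty,\cdot)$ at the accumulation point $\theta_\infty$, again of probability zero. I expect the main obstacle to be the discretisation in the fixed-time part (turning the ``for each $\alpha$'' estimates into ``for all $\alpha$'', which is exactly where the fourth-derivative bound and the ``at most one zero eigenvalue'' clause of Assumption \ref{Assmpt Long Time 2} are essential), together with the bookkeeping needed to guarantee nondegeneracy at the random restart times $\sigma_k$; both reduce to combining the fixed-time codimension counts with the a.s.\ Lebesgue-nullity of the set of times at which the global minimiser is non-unique or degenerate.
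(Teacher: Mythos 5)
Your plan diverges structurally from the paper's proof in the pathwise part, and the divergence is exactly where it fails. The paper never restarts the local SDE at the random times where the global minimiser jumps or degenerates; instead it builds a $\delta$-thresholded auxiliary process $\eta^\delta$ that tracks $\beta^*$ only while $u_t\in E^{\delta^{-1}}_\delta$ (unique global minimiser, curvature $>\delta$ on a $\delta$-neighbourhood of it, quantitative separation from competitors), is frozen on the complementary time set $\mathcal{D}_{\delta,T}$, applies Theorem \ref{thm: ineq better} only while tracking, bounds the error $R(\delta)$ accrued while frozen via the mild formulation, and lets $\delta\to0$ after showing $|\mathcal{D}_{\delta,T}|\to0$ (Lemma \ref{Lemma D delta}). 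Your recursion instead defines $\sigma_{k+1}$ as the exit time of $\beta^{(k)}$ from global optimality and restarts the SDE there. That restart needs $\gamma(\beta^*_{\sigma_{k+1}},v^0_{\sigma_{k+1}})>0$ and uniqueness of $\beta^*_{\sigma_{k+1}}$, which you justify by fixed-time probability-zero counts (``two coexisting global minima with one having $\gamma=0$ is a probability-zero configuration''). But $\sigma_{k+1}$ is a random time at which something exceptional is happening \emph{by construction}, and the a.s.\ Lebesgue-nullity of the exceptional times does not let you conclude that a given random time avoids them. The same issue infects your termination argument: the paper excludes accumulation of exit/re-entry times before $\rho^\delta_T$ precisely because the $E^{\delta^{-1}}_\delta\leftrightarrow E^{\delta^{-1}}_{\delta/2}$ thresholding yields a $k$-uniform oscillation $\bar\kappa(\delta)>0$ of $\|u_\cdot\|$ between consecutive switches; without a threshold, your ``shrinking jump sizes force a triple degeneracy at the accumulation time'' is yet another probability-zero-at-a-random-time claim.

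A second, smaller gap is in the a.s.\ uniqueness of $\beta^*_t$ at a fixed $t$. Your covering argument does work for the codimension-2 nondegeneracy condition (probability of $\partial_\alpha m=\partial^2_\alpha m=\partial^3_\alpha m=0$ somewhere on a grid is $O(\delta)$), and that is essentially the content of the paper's Lemmas \ref{Lemma Z alpha derivatives}--\ref{Lemma constant CM}. But it does not work for uniqueness: the variance of $Z_{\alpha,t}-Z_{\beta,t}$, namely $4\langle\varphi_\alpha-\varphi_\beta,Q_t(\varphi_\alpha-\varphi_\beta)\rangle$, degenerates as $\alpha\to\beta$, so a union bound over a $\delta$-net of pairs $(\alpha,\beta)$ produces $O(\delta^{-2})$ events each of probability only $O(\delta)$, which does not vanish. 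That diagonal degeneracy is exactly what makes a.s.\ uniqueness of the argmax of a continuous Gaussian process nontrivial; the paper handles it by invoking \cite[Lemma 2.6]{kim-pollard:93}, which only needs nondegeneracy of $Z_{\alpha,t}-Z_{\beta,t}$ for each fixed pair, with no quantitative rate. Your observations that $\|z^*_t\|^2$ is continuous in $t$, and that the Girsanov reduction to a Gaussian law is available, are both correct and both used by the paper, but the glue between them has to be the $\delta$-thresholded construction, not a restart at random exit times.
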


\begin{rem}
The inequality in Theorem \ref{Proposition Nonlinear Inequality} holds despite the fact that $z^*_t$ and $\beta^*_t$ may not be continuous in time. If $z^*_t \in \mathcal{D}(A)$, then $\Xi(z^*_t) = 2\langle z^*_t, Az^*_t\rangle$. If this holds for all $t\geq0$, the inequality in Theorem \ref{Proposition Nonlinear Inequality} becomes an equality.
\end{rem}

\begin{rem}
Assumptions \ref{Assmpt Long Time 1} and \ref{Assmpt Long Time 2} are used to ensure that $|\mathcal{D}_{\delta,T}|\to 0$ as $\delta\to 0$ for any $T\geq0$, where $\mathcal{D}_{\delta,T}$ is defined in the course of the proof.  This proof is given in Lemma \ref{Lemma D delta}, and demonstrates that if the noise is uncorrelated at any two distinct points in space, then through the Girsanov theorem $(u_t)_{t\geq0}$ will also be uncorrelated. We think that this is by no means necessary for $|\mathcal{D}_{\delta,T}|\to 0$ as $\delta\to 0$. In fact, it is possible that even if the noise is quite degenerate, the dynamics of $A$ and $f$ might ensure that $(u_t)_{t\geq0}$ is not. 
\end{rem}

\begin{rem}\label{remark long time application}
Suppose that there were to exist constants $b,C> 0$ such that, for all $u \in E$, 
\[
\Xi(u-\varphi_\alpha)+ 2\langle f(u) - f(u - \varphi_\alpha), u-\varphi_\alpha\rangle \leq -b\norm{u-\varphi_\alpha}^2 + C,
\]
where $\alpha$ is a global minimizer of $\alpha\mapsto \norm{u-\varphi_\alpha}^2$.
Then a consequence of Theorem \ref{Proposition Nonlinear Inequality} would be that
\begin{equation*}
\E\norm{z^*_t }^2 \leq (1 - e^{-bt})C + e^{-bt}\E\norm{z^*_0}^2  + \varepsilon^2\mathrm{Tr}(Q)(1 - e^{-bt}), \quad \forall t\geq0.
\end{equation*}
That is, we would obtain a bound on $\E\norm{z^*_t }^2$ which holds uniformly for all $t>0$. Unfortunately, at the moment we do not have any examples where the above inequality holds. However, we believe that it might be possible for some traveling waves, particularly if we work in a Hilbert space with weighted inner product, and plan to investigate this in the future.
\end{rem}

\subsection{Proof of Theorem \ref{Proposition Nonlinear Inequality} and Lemma \ref{Lemma D delta}}
In order to prove Theorem \ref{Proposition Nonlinear Inequality}, we introduce the following definitions. 

\vspace{0.3cm}
\noindent\textit{The set $E_\delta$:} For $\delta\in(0, 1)$ define $E_\delta \subset E$ by $u\in E_\delta$ $\Leftrightarrow$ i) $u\in E$, ii) $\exists$ a unique global minimum $\Gamma(u)$ of $\alpha\mapsto\norm{u - \varphi_\alpha}^2$, and iii) for all $\alpha \in [\Gamma(u)-\delta,\Gamma(u)+\delta]$, $\gamma(\alpha,u)>\delta$, where we recall that $\gamma(\cdot, u)$ is the `curvature' of the map $\alpha\mapsto\norm{u - \varphi_\alpha}^2$ given by \eqref{gamma}.

\vspace{0.3cm}
\noindent\textit{The set $E_\delta^M$:} For $M>0$ and $\delta\in(0,1)$, let $E_\delta^M\subset E_\delta$ be such that  $u\in E^M_\delta$ $\Leftrightarrow$ i) $u\in E_\delta$, ii) $\norm{u - \varphi_0} < M$, and iii) for all $\alpha\in\R\backslash[\Gamma(u)-\delta,\Gamma(u)+\delta]$,
\[
\norm{u - \varphi_{\alpha}}> \norm{u - \phi_{\Gamma(u)}}  +  \delta^3(\delta\norm{\varphi_0'}+ 2M)^{-1}. 
\]

\vspace{0.3cm}
\noindent\textit{The stopping time $\rho_T^{\delta}$:} For $T>0$, we define $\rho_T^\delta := \inf\Big\{ t\leq T: \norm{v^0_t} \geq\delta^{-1}\Big\}$, with $\rho^\delta_T =T$ if the set is empty.

\vspace{0.3cm}
\noindent\textit{The process $(\eta^\delta_t)_{t\in[0, T]}$:} 
For $T>0$, we now introduce the process $(\eta^\delta_t)_{t\in[0, T]}$, for any $\delta\in(0, \gamma(\beta^*_0, u_0 - \varphi_0))$ in the following recursive way. Let $\tau^{0}= \inf\{ t\geq 0: u_t\in E^{\delta^{-1}}_{\delta} \}\wedge\rho_T^\delta$,
and for any $k\geq 0$, let 
\begin{align*}
\tau^{2k+1}&= \inf\Big\{ t\geq \tau^{2k}: u_t\notin E^{\delta^{-1}}_{\delta/2} \Big\}\wedge\rho_T^\delta,\\
\tau^{2k+2}&= \inf\Big\{ t\geq \tau^{2k+1}: u_t\in E^{\delta^{-1}}_{\delta} \Big\}\wedge\rho_T^\delta.
\end{align*}
Note that we are hiding the dependence of $\tau^n$ on $\delta$ and $T$ for notational sake (to avoid too many subscripts). For $t\in[\tau^{2k},\tau^{2k+1}]$, define $\eta^{\delta}_t = \beta^*_t$ (where $(\beta^*_t)_{t\geq0}$ is as in the theorem). On the other hand for $t\in(\tau^{2k+1},\tau^{2k+2})$, define $\eta^{\delta}_t = \eta^\delta_{\tau^{2k+1}}$, and for $t\in [0,\tau^0)$ define $\eta^\delta_t  = 0$. If $t\in [\rho^\delta_T, T]$, then define $\eta^{\delta}_t = \eta^{\delta}_{\rho^{\delta}_T}$. Let
\begin{equation}
\mathcal{D}_{\delta,T} := \bigcup_{k=0}^\infty(\tau^{2k+1}, \tau^{2k+2}). \label{eqn D delta t definition}
\end{equation}
\vspace{0.3cm}
\noindent The following lemma shows that the process $(\eta^\delta_t)_{t\in[0, T]}$ is well-defined for any $T>0$.  

\begin{lem}
Let $T,\delta > 0$. Then there exists $n\geq1$ such that $\tau^n = \rho^\delta_T$ almost surely. In particular the process $(\eta^\delta_t)_{t\in[0, T]}$ described above is well-defined for any $T>0, \delta>0$.
\end{lem}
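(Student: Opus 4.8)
The plan is to reduce everything to a uniform rate estimate for the recursion: I will show that $\tau^{n+1}\ge(\tau^n+h_0)\wedge\rho^\delta_T$ for some (path-dependent) $h_0>0$, from which $\tau^n\ge(nh_0)\wedge\rho^\delta_T$, so $\tau^n=\rho^\delta_T$ once $nh_0>T$; since $(\eta^\delta_t)_{t\in[0,T]}$ is defined through only finitely many of the $\tau^n$ (and is constant after $\rho^\delta_T$), well-definedness follows immediately.

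The heart of the matter is a geometric separation estimate, which I would establish first: there is $\epsilon_0=\epsilon_0(\delta)>0$ such that every $w\in E$ with $\|w-\varphi_0\|<\delta^{-1}$ and $\mathrm{dist}_H(w,E^{\delta^{-1}}_\delta)<\epsilon_0$ already lies in $E^{\delta^{-1}}_{\delta/2}$; in particular $E^{\delta^{-1}}_\delta\subseteq E^{\delta^{-1}}_{\delta/2}$, and inside the ball $\{\|\cdot-\varphi_0\|<\delta^{-1}\}$ the set $E^{\delta^{-1}}_\delta$ carries a uniform $\epsilon_0$-collar contained in $E^{\delta^{-1}}_{\delta/2}$. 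To prove it, take $u\in E^{\delta^{-1}}_\delta$ and $w$ with $\|u-w\|<\epsilon_0$, $\|w-\varphi_0\|<\delta^{-1}$, and track the three defining conditions of $E^{\delta^{-1}}_\delta$ as $u$ is replaced by $w$. Since $\big|\,\|w-\varphi_\alpha\|-\|u-\varphi_\alpha\|\,\big|\le\epsilon_0$ uniformly in $\alpha$, the moat condition of $E^{\delta^{-1}}_\delta$ forces every global minimiser of $\alpha\mapsto\|w-\varphi_\alpha\|^2$ into $[\Gamma(u)-\delta,\Gamma(u)+\delta]$. On that interval $\gamma(\alpha,u)>\delta$ (recall $\gamma$ from \eqref{gamma}) and $|\gamma(\alpha,u)-\gamma(\alpha,w)|=|\langle u-w,\varphi''_\alpha\rangle|\le\epsilon_0\|\varphi''_0\|$, so for $\epsilon_0$ small $\gamma(\cdot,w)>\delta/2$ there; hence $\alpha\mapsto\|w-\varphi_\alpha\|^2$ is strictly convex on $[\Gamma(u)-\delta,\Gamma(u)+\delta]$ with a unique interior minimiser $\Gamma(w)$, and since $\tfrac{d}{d\alpha}\|w-\varphi_\alpha\|^2\big|_{\Gamma(u)}=-2\langle w-u,\varphi'_{\Gamma(u)}\rangle$ has modulus $\le 2\epsilon_0\|\varphi'_0\|$ (using the stationarity $\langle u-\varphi_{\Gamma(u)},\varphi'_{\Gamma(u)}\rangle=0$), strict convexity yields $|\Gamma(w)-\Gamma(u)|\le C(\delta)\epsilon_0$. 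With $\Gamma(w)$ this close to $\Gamma(u)$, both the curvature bound $\gamma(\cdot,w)>\delta/2$ on $[\Gamma(w)-\tfrac{\delta}{2},\Gamma(w)+\tfrac{\delta}{2}]$ and the moat of $w$ of depth $(\delta/2)^3((\delta/2)\|\varphi'_0\|+2\delta^{-1})^{-1}$ follow — the latter from the moat of $u$ outside $[\Gamma(u)-\delta,\Gamma(u)+\delta]$ together with a second-order Taylor estimate on the intervening annulus, using $\|w-\varphi_{\Gamma(w)}\|\le\|w-\varphi_0\|<\delta^{-1}$ and the Lipschitz continuity of $\alpha\mapsto\varphi'_\alpha,\varphi''_\alpha$ from Assumption \ref{assump varphi}. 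The constants in the definition of $E^M_\delta$ are arranged precisely so that this comparison closes with room to spare.

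Granting the collar estimate, I would conclude quickly. By Proposition \ref{K-S} (and the usual regularity of stochastic convolutions) the path $t\mapsto u_t=v^0_t+\varphi_0$ is $H$-continuous, hence uniformly continuous on $[0,T]$; fix $h_0>0$ with $\|u_t-u_s\|<\epsilon_0$ whenever $|t-s|<h_0$, and note $\|u_t-\varphi_0\|<\delta^{-1}$ for $t<\rho^\delta_T$. If $\tau^{2k}<\rho^\delta_T$ then, $\tau^{2k}$ being an infimum, there is $s\in[\tau^{2k},\tau^{2k}+h_0/2)$ with $s<\rho^\delta_T$ and $u_s\in E^{\delta^{-1}}_\delta$; the collar estimate then gives $u_t\in E^{\delta^{-1}}_{\delta/2}$ for all $t\in[\tau^{2k},(\tau^{2k}+h_0)\wedge\rho^\delta_T)$, whence $\tau^{2k+1}\ge(\tau^{2k}+h_0)\wedge\rho^\delta_T$. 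Symmetrically, if $\tau^{2k+1}<\rho^\delta_T$ there is $s$ near $\tau^{2k+1}$ with $s<\rho^\delta_T$ and $u_s\notin E^{\delta^{-1}}_{\delta/2}$; the contrapositive of the collar estimate gives $\mathrm{dist}_H(u_s,E^{\delta^{-1}}_\delta)\ge\epsilon_0$, so $u_t\notin E^{\delta^{-1}}_\delta$ for $t\in[\tau^{2k+1},(\tau^{2k+1}+h_0)\wedge\rho^\delta_T)$ and thus $\tau^{2k+2}\ge(\tau^{2k+1}+h_0)\wedge\rho^\delta_T$; the same argument handles the first step from $\tau^0$ to $\tau^1$. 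Hence $\tau^n\ge(nh_0)\wedge\rho^\delta_T$ for all $n$, so $\tau^n=\rho^\delta_T$ as soon as $nh_0>T$, and every later $\tau^m$ then equals $\rho^\delta_T$ because the defining infimum is taken over $t\ge\rho^\delta_T$ and capped at $\rho^\delta_T$. This furnishes the claimed $n$ (for almost every path; the $n$ depends on the path through $h_0$), and the well-definedness of $(\eta^\delta_t)_{t\in[0,T]}$ follows at once.

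The main obstacle is the geometric separation estimate, and within it the displacement bound $|\Gamma(w)-\Gamma(u)|\le C(\delta)\epsilon_0$: the moat condition alone only confines $\Gamma(w)$ to $[\Gamma(u)-\delta,\Gamma(u)+\delta]$, and the quantitative pinning needs the strict convexity supplied by $\gamma(\cdot,u)>\delta$, after which one must verify the elementary but not-generous inequality relating the fixed thresholds $\delta^3(\delta\|\varphi'_0\|+2\delta^{-1})^{-1}$ and $(\delta/2)^3((\delta/2)\|\varphi'_0\|+2\delta^{-1})^{-1}$ with all constants explicit (it is precisely the factor gained from the second-order Taylor term that makes it work). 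A minor subtlety, already accommodated above, is that the sets $E^M_\delta$ need not be open, which is why one argues with times $s$ slightly past each $\tau^n$ rather than with the endpoints $\tau^n$ themselves.
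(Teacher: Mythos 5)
Your proof is correct, and it takes a genuinely different route from the paper's. The paper argues by contradiction: assuming $\tau^n < \rho^\delta_T$ for all $n$, it first shows (Step~1, via Lemma~\ref{Lemma Uniform Continuity Gamma}) that $|\Gamma(u_{\tau^{2k}}) - \Gamma(u_{\tau^{2k+1}})| \to 0$, and then (Step~2, splitting on whether the curvature or the moat condition of $E^{\delta^{-1}}_{\delta/2}$ fails at $\tau^{2k+1}$, using Lemma~\ref{Lemma E delta temporary}) extracts a uniform lower bound $\kappa^*>0$ on $\limsup_k\|u_{\tau^{2k+1}}-u_{\tau^{2k}}\|$, yielding infinitely many nontrivial oscillations of the continuous path $u$ on a compact interval --- a contradiction. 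You instead prove a standalone geometric ``collar'' estimate (every $w$ in the ball $\{\|\cdot-\varphi_0\|<\delta^{-1}\}$ at $H$-distance less than $\epsilon_0(\delta)$ from $E^{\delta^{-1}}_{\delta}$ already lies in $E^{\delta^{-1}}_{\delta/2}$) and then combine it with the uniform continuity of $t\mapsto u_t$ on $[0,T]$ to obtain a deterministic (path-dependent) lower bound $h_0$ on each increment $\tau^{n+1}-\tau^n$ before $\rho^\delta_T$, so that $\tau^n = \rho^\delta_T$ once $nh_0 > T$. The geometric content of your collar estimate is essentially the contrapositive of the paper's Step~2, and you use the same underlying ingredients (the Taylor estimate of Lemma~\ref{Lemma E delta temporary}, the implicit-function-type pinning of $\Gamma(w)$ near $\Gamma(u)$ that mirrors Lemma~\ref{Lemma Uniform Continuity Gamma}, and the specific constants built into the definition of $E^M_\delta$); I checked your critical annulus computation, and the monotonicity of $r\mapsto r^2/(r\|\varphi'_0\|+2\delta^{-1})$ does indeed make it close. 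What your route buys is a cleaner packaging --- all the geometry is isolated in one lemma, after which the conclusion is a one-line induction --- and a quantitatively stronger statement: an explicit uniform gap between consecutive $\tau^n$, not merely finiteness by contradiction. You are also careful about the fact that the $\tau^n$ are infima and the sets $E^M_\delta$ need not be open, which the paper handles implicitly through the $\limsup$.
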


\begin{proof}
Suppose for a contradiction that $\tau^n <\rho^\delta_T$ for all $n\geq1$. 

\vspace{0.3cm}
\noindent\textit{Step 1:} We first claim that for all $\kappa >0$, there exists $k_0\geq1$ such that for all $k\geq k_0$ $|\Gamma(u_{\tau^{2k}}) - \Gamma(u_{\tau^{2k+1}})| \leq \kappa$ almost surely.

To see this, suppose otherwise.  Then for some $\kappa >0$ there exists a subsequence $(k_r)_{r\geq1}$ such that $| \Gamma(u_{\tau^{2k_r}}) - \Gamma(u_{\tau^{2k_r+1}}) | \geq \kappa$ for all $r\geq1$.  Now it is clear by continuity of $(u_t)_{t\geq0}$ and Lemma \ref{Lemma E delta nested} below that $u_{\tau^n} \in E^{2\delta^{-1}}_{\delta/3}$ for all $n\geq1$.  Thus by Lemma \ref{Lemma Uniform Continuity Gamma}, there exists $\upsilon > 0$ such that for all $r$, $\|u_{\tau^{2k_r}} - u_{\tau^{2k_r+1}}\| \geq \upsilon$, which contradicts the continuity of $(u_t)_{t\geq0}$. 

By the claim we thus have that for $k$ sufficiently large
\begin{equation}\label{eqn Interval Inclusion}
\left[\Gamma(u_{\tau^{2k+1}})-\delta / 2, \Gamma(u_{\tau^{2k+1}})+\delta / 2\right] \subset \left[\Gamma(u_{\tau^{2k}})-\delta, \Gamma(u_{\tau^{2k}})+\delta\right]. 
\end{equation}

\vspace{0.3cm}
\noindent\textit{Step 2:}  The second step is to establish that there exists a constant $\kappa^* > 0$
\begin{equation}
\label{claim: inf osc}
\limsup_{k\to\infty} \norm{u_{\tau^{2k+1}}-u_{\tau^{2k}}} \geq \kappa^*.
\end{equation}
This implies that there are infinitely many nontrivial oscillations over the interval $[0,\rho_T^\delta]$, which is clearly a contradiction and thus proves the lemma.

The rest of the proof is thus devoted to showing \eqref{claim: inf osc}.
There are two (non exclusive) possible reasons why $u_{\tau^{2k+1}} \notin E_{\delta / 2}^{\delta^{-1}}$. The first possibility is that there exists an $\alpha \in \left[\Gamma(u_{\tau^{2k+1}})-\delta / 2, \Gamma(u_{\tau^{2k+1}})+\delta / 2\right]$ such that $\gamma(\alpha,u_{\tau^{2k+1}} ) \leq  \delta / 2$. In this case, since $u_{\tau^{2k}}\in E_{\delta}^{\delta^{-1}}$ by definition, thanks to \eqref{eqn Interval Inclusion} it must be that $\gamma(\alpha,u_{\tau^{2k}}) > \delta$. This means that 
\begin{align*}
\norm{u_{\tau^{2k+1}}-u_{\tau^{2k}}} = \norm{v^0_{\tau^{2k+1}} - v^0_{\tau^{2k}}} &\geq \frac{| \langle v^0_{\tau^{2k+1}} - v^0_{\tau^{2k}},\varphi''_\alpha\rangle |}{\norm{\varphi''_0}} \\
&= \frac{\gamma(\alpha,u_{\tau^{2k}}) - \gamma(\alpha,u_{\tau^{2k+1}})}{\norm{\varphi''_0}}\geq \frac{\delta}{2\norm{\varphi''_0}},
\end{align*}
so that \eqref{claim: inf osc} holds in this case.

The other possible reason why $u_{\tau^{2k+1}} \notin E_{\delta / 2}^{\delta^{-1}}$ is that there exists an $\alpha$ such that $|{\alpha}-\Gamma(u_{\tau^{2n+1}})| \geq \frac{\delta}{2}$ and 
\[
\norm{u_{\tau^{2k+1}} - \varphi_{\Gamma(u_{\tau^{2k+1}})}} \geq \norm{u_{\tau^{2k+1}} - \varphi_{{\alpha}}} -  \frac{1}{8}\delta^3(\delta\norm{\varphi_0'}/2+2\delta^{-1})^{-1}. 
\]
Now by Taylor's theorem, for some $\lambda \in [0,1]$,
\begin{align*}
\norm{\varphi_{\Gamma(u_{\tau^{2k+1}})} - \varphi_{\Gamma(u_{\tau^{2k}})}} &\leq \norm{\varphi'_{\lambda\Gamma(u_{\tau^{2k+1}}) + (1-\lambda)\Gamma(u_{\tau^{2k}})}}|\Gamma(u_{\tau^{2k+1}})-\Gamma(u_{\tau^{2k}})| \\ 
&= \norm{\varphi_0'}|\Gamma(u_{\tau^{2k+1}})-\Gamma(u_{\tau^{2k}})| \leq\norm{\varphi_0'}\kappa,
\end{align*}
for any $\kappa>0$ by taking $k$ large enough by Step 1.
From Lemma \ref{Lemma E delta temporary} and the definition of $E_\delta$,
\begin{multline*}
\norm{u_{\tau^{2k}} - \varphi_{\Gamma(u_{\tau^{2k}})}} \leq \norm{u_{\tau^{2k}} - \varphi_{{\alpha}}} - \min\left(\delta^2,({\alpha}-\Gamma(u_{\tau^{2k}}))^2\right)\times \frac{\delta}{\delta\norm{\varphi_0'} + 2\delta^{-1}}. 
\end{multline*}
Thus by the reverse triangle inequality, we find using the above three equations that
\begin{multline}
\norm{u_{\tau^{2k+1}} - u_{\tau^{2k}}} \geq \norm{u_{\tau^{2k+1}} - \varphi_{\Gamma(u_{\tau^{2k+1}})}} - \norm{u_{\tau^{2k}} - \varphi_{\Gamma(u_{\tau^{2k}})}} - \norm{\varphi_{\Gamma(u^{2k+1})} - \varphi_{\Gamma(u^{2k})}} \\
\geq  \norm{u_{\tau^{2k+1}} - \varphi_{{\alpha}}} - \norm{u_{\tau^{2k}} - \varphi_{{\alpha}}}+ \min\left(\delta^2,({\alpha}-\Gamma(u_{\tau^{2k}}))^2\right)\times \frac{\delta}{\delta\norm{\varphi_0'} + 2\delta^{-1}}\\ -  \frac{\delta^3}{8(\delta\norm{\varphi_0'} / 2+ 2\delta^{-1})} - \norm{\varphi_0'}\kappa,\label{eqn temp kappa}
\end{multline}
for any $\kappa>0$ by taking $k$ large enough.
Moreover, for such $k$
\begin{align*}
\left|{\alpha}-\Gamma(u_{\tau^{2k}})\right| &\geq \left|{\alpha}-\Gamma(u_{\tau^{2k+1}})\right| - \left|\Gamma(u_{\tau^{2k+1}})-\Gamma(u_{\tau^{2k}})\right| \\
&\geq  \left|{\alpha}-\Gamma(u_{\tau^{2k+1}})\right| - \kappa \geq \frac{\delta}{2} - \kappa.
\end{align*}
Therefore, for $\kappa \in(0, \delta/2)$ and $k$ large enough
\begin{align*}
& \min\left(\delta^2,(\bar{\alpha}-\Gamma(u_{\tau^{2k}}))^2\right)\times \frac{\delta}{\delta\norm{\varphi_0'} + 2\delta^{-1}} - \frac{\delta^3}{8(\delta\norm{\varphi_0'} / 2+ 2\delta^{-1})} -\norm{\varphi_0'}\kappa \\
 &\geq \left(\frac{\delta}{2} - \kappa\right)^2\times \frac{\delta}{\delta\norm{\varphi_0'} + 2\delta^{-1}} - \frac{\delta^3}{4(\delta\norm{\varphi_0'} + 4\delta^{-1})}  -\norm{\varphi_0'}\kappa\\
  &\geq \frac{\delta^{2}}{2(\delta\|\varphi'_0\| + 4\delta^{-1})^2} -\kappa\left(\frac{\delta^2}{\delta\norm{\varphi_0'} + 2\delta^{-1}}+ \norm{\varphi_0'}\right) \geq \frac{\delta^2}{4(\delta\|\varphi_0\| + 4\delta^{-1})^2} =: \bar{\kappa},
\end{align*}
by choosing $\kappa$ small enough, and then $k$ large enough.
Applying this to \eqref{eqn temp kappa}, we then have that for $k$ large enough, $\norm{u_{\tau^{2k+1}} - u_{\tau^{2k}}} \geq \bar{\kappa}+ \norm{u_{\tau^{2k+1}} - \varphi_{\alpha}} - \norm{u_{\tau^{2k}} - \varphi_{\alpha}}$. Since $\norm{u_{\tau^{2k+1}} - \varphi_{\alpha}} - \norm{u_{\tau^{2k}} - \varphi_{\alpha}} \geq - \norm{u_{\tau^{2k+1}} - u_{\tau^{2k}}}$, we find that $\norm{u_{\tau^{2k+1}} - u_{\tau^{2k}}}  \geq \frac{1}{2}\bar{\kappa}$, for $k$ large enough.  This shows that \eqref{claim: inf osc} holds in this second case too, which proves the result.
\end{proof}

We can now turn to the proof of Theorem \ref{Proposition Nonlinear Inequality}.

\begin{proof}[Proof of Theorem \ref{Proposition Nonlinear Inequality}]
We first prove the theorem for the case $s=0$ and $t = T$. We also assume for now that there exists a $\delta$ such that $u_0 \in E_\delta$, and so $\tau^0 = 0$. Assume that $\delta$ is small enough so that $\sup_{r\in [0,T]}\norm{v^0_t} \leq \delta^{-1}$, so that $\rho^\delta_T = T$. Define
\begin{equation}
\label{y}
y^\delta_r = u_r - \varphi_{\eta^\delta_r}, \qquad r\in[0, T],
\end{equation}
where $(\eta_r^\delta)_{r\in[0,T]}$ is defined above. The process $(\eta^\delta_r)_{r\in[0, T]}$ has been constructed piecewise on each interval $[\tau^{2k}, \tau^{2k+2})$ for $k\geq0$ (with $\tau^0:=0$), with $\eta^\delta$ satisfying the SDE \eqref{beta} on $[\tau^{2k}, \tau^{2k+1}]$ and being constant on the interval $[\tau^{2k+1}, \tau^{2k+2})$ (and equal to $\beta^*_{\tau^{2k+1}}$). Then
\begin{equation*}
\norm{y^\delta_T}^2 - \norm{y^\delta_0}^2 \leq \sum_{k=0}^\infty \left(\norm{y^\delta_{\tau^{2k+1}}}^2 - \norm{y^\delta_{\tau^{2k}}}^2 + \norm{y^\delta_{\tau^{2k+2}}}^2 - \norm{y^\delta_{\tau^{2k+1}}}^2\right).
\end{equation*}

By definition, on $\mathcal{D}_{\delta, T}^c:= [0,T]\backslash\mathcal{D}_{\delta, T}$, the process $\eta^\delta$ follows the solution of the SDE \eqref{beta}.  We may therefore apply Theorem \ref{thm: ineq better} to see that
\begin{multline*}
\norm{y^\delta_T}^2 - \norm{y^\delta_0}^2 \leq \int_{\mathcal{D}_{\delta, T}^c}\Xi(y^\delta_r)+ 2\langle f(\varphi_{\eta^\delta_r} + y^{\delta}_r) - f(\varphi_{\eta^\delta_r}), y^\delta_r\rangle dr +2\varepsilon \int_{\mathcal{D}_{\delta, T}^c}\langle y^\delta_r,B(r)dW^Q_r\rangle \\
+\varepsilon^2\int_{\mathcal{D}_{\delta, T}^c}\left[\mathrm{Tr}(Q) - \frac{\|Q^\frac{1}{2}\varphi'_{\eta_r^\delta}\|^2}{\gamma(\eta_r^\delta, v_r^0)}\right] dr+ \sum_{k=0}^\infty \left(\norm{y^\delta_{\tau^{2k+2}}}^2 - \norm{y^\delta_{\tau^{2k+1}}}^2\right).
\end{multline*}
Moreover, for $r\in\mathcal{D}_{\delta, T}^c$, $\eta^\delta_r = \beta^*_r$ by defintion.  Therefore 
\begin{multline*}
\norm{z^*_T}^2 - \norm{z^*_0}^2 \leq \int_{\mathcal{D}_{\delta, T}^c}\Xi(z^*_r)+ 2\langle f(\varphi_{ \beta^*_r} + z^*_r) - f(\varphi_{ \beta^*_r}), z^*_r\rangle dr \\ +2\varepsilon \int_{\mathcal{D}_{\delta, T}^c}\langle z^*_r,B(r)dW^Q_r\rangle+\varepsilon^2\int_{\mathcal{D}_{\delta, T}^c}\left[\mathrm{Tr}(Q) - \frac{\|Q^\frac{1}{2}\varphi'_{ \beta^*_r}\|^2}{\gamma( \beta^*_r, v_r^0)}\right] dr + R(\delta)
\end{multline*}
where 
\[
R(\delta):= \sum_{k=0}^\infty \left(\norm{y^\delta_{\tau^{2k+2}}}^2 - \norm{y^\delta_{\tau^{2k+1}}}^2\right).
\]
Thanks to Lemma  \ref{Lemma D delta}, it can thus be seen that the following claim is sufficient to establish the theorem.
\vspace{0.3cm}

\noindent\textit{Claim:} We claim that $R(\delta) \to 0$ almost surely as $\delta\to0$.

\vspace{0.3cm}
\noindent\textit{Proof of claim:} 
By definition of the process $(\eta_t^\delta)_{t\in[0, T]}$ we have that for any $k\geq0$, 
\[
\|y^\delta_{\tau^{2k+2}}\|^2 \leq \|y^\delta_{\tau^{2k+2}-}\|^2.
\]
Therefore
\begin{equation*}
R(\delta)\leq \sum_{k=0}^\infty \left(\norm{y^\delta_{t\wedge \tau^{2k+2}-}}^2 - \norm{y^\delta_{t\wedge\tau^{2k+1} }}^2\right).
\end{equation*}
Now since $\eta^\delta$ is constant on $[\tau^{2k+1}, \tau^{2k+2})$, and using the fact that $y_r^\delta = v_r^0 + \varphi_0 - \varphi_{\eta^\delta_{\tau^{2k+1}}}$ for all $r\in [\tau^{2k+1}, \tau^{2k+2})$,
\[
\norm{y^\delta_{ \tau^{2k+2}-}}^2 - \norm{y^\delta_{\tau^{2k+1} }}^2 = \|v^0_{ \tau^{2k+2}}\|^2 - \|v^0_{ \tau^{2k+1}}\|^2 + 2\langle v^0_{ \tau^{2k+2}} - v^0_{ \tau^{2k+1}}, \varphi_0 - \varphi_{\eta^\delta_{\tau^{2k+1}}}\rangle.
\]
Moreover, by It\^o's lemma and Proposition \ref{K-S} we can see that for any $0\leq s\leq r$
\begin{multline*}
\norm{v^0_{r}}^2 = \norm{P^A_{r- s}v^0_{s}}^2+2\int_{s}^{r}\langle P^A_{r - \theta}v^0_\theta,P^A_{r-\theta}(f(\varphi_{0} + v^0_\theta) - f(\varphi_{0}))\rangle d\theta \\ 
+  2\varepsilon \int_{s}^{r}\langle P^A_{r - \theta}v^0_\theta, P^A_{r - \theta}B(\theta)dW^Q_\theta\rangle + \varepsilon^2\int_{s}^{r}\mathrm{Tr}\left( B^*(\theta)(P^A_{r- \theta})^*QP^A_{r-\theta}B(\theta)\right)d\theta. 
\end{multline*}
For $r\in \mathcal{D}_{\delta, T}$, write $\bar{r} = \inf \lbrace \tau^{2k} : k\geq0,\ \tau^{2k} \geq r\rbrace$ and $\underline{r} = \sup\lbrace \tau^{2k+1} : k\geq0,\ \tau^{2k+1} \leq r\rbrace$. We thus see that 
\begin{multline}
|R(\delta)| \leq \sum_{k=0}^\infty\left| \|v^0_{\tau^{2k+2}}\|^2 - \|v^0_{ \tau^{2k+1}}\|^2 + 2\langle v^0_{\tau^{2k+2}} - v^0_{\tau^{2k+1}}, \varphi_0 - \varphi_{\eta^\delta_{ \tau^{2k+1}}}\rangle\right| \\
\leq \sum_{k=0}^\infty  \Big|\norm{P^A_{\tau^{2k+2}-   \tau^{2k+1}}v^0_{ \tau^{2k+1}}}^2 - \|v^0_{ \tau^{2k+1}}\|^2\Big| \\ 
+ 2\int_{\mathcal{D}_{\delta, T}}\Big|\Big\langle P^A_{\bar{r}-r}v^0_r, P^A_{\bar{r}-r}[f(\varphi_{0} + v^0_r) - f(\varphi_{0})]\Big\rangle\Big| dr + 2\varepsilon \Big|\int_{\mathcal{D}_{\delta, T}}\Big\langle P^A_{\bar{r}-r}v^0_r, P^A_{\bar{r}-r}B(r)dW^Q_r\Big\rangle\Big|\\ 
+ \varepsilon^2 \int_{\mathcal{D}_{\delta, T}}\mathrm{Tr}\left( B^*(r)(P^A_{\bar{r}-r})^*QP^A_{\bar{r}-r}B(r)\right)dr \\
+ 2\sum_{k=0}^\infty\Big|\Big\langle P^A_{\tau^{2k+2} - \tau^{2k+1}}v^0_{\tau^{2k+1}} - v^0_{ \tau^{2k+1}}, \varphi_0 - \varphi_{\eta^\delta_{\tau^{2k+1}}}\Big\rangle\Big|\\
+ 2 \int_{\mathcal{D}_{\delta, T}}\Big|\Big\langle P^A_{\bar{r} - r}[f(\varphi_{0} + v^0_r) - f(\varphi_{0})], \varphi_0 - \varphi_{\eta^\delta_{ \underline{r}}}\Big\rangle\Big| dr
+2\varepsilon \Big|\int_{\mathcal{D}_{\delta, T}}\Big\langle \varphi_0 - \varphi_{\eta^\delta_{ \underline{r}}}, P^A_{\bar{r}-r}B(r)dW^Q_r\Big\rangle\Big|.\label{eq temporary v bound}
\end{multline}
By Lemma \ref{Lemma D delta}, we have that $|\mathcal{D}_{\delta, T}|\to0$ almost surely as $\delta\to0$.  Therefore ${\tau}^{2k+2}-   {\tau}^{2k+1} \to0$ almost surely, for every $k\geq0$. Since $(P^A_r)_{r\geq0}$ is a strongly continuous semigroup (see Assumption \ref{assump A} (i)), we thus have that first term on the right-hand side of \eqref{eq temporary v bound} converges to 0 almost surely.  Moreover, so does the fifth term thanks to the dominated convergence theorem.  All the other non-stochastic integral terms are similarly easy to handle thanks the dominated convergence theorem.  The stochastic integral term can be also shown to converge to $0$ almost surely as $\delta\to0$ by taking expectations. We note also that the other claim in the theorem -- i.e. the almost sure uniqueness of $\beta^*_t$ for each $t$ -- is proved in the course of Lemma \ref{Lemma D delta}.

We assumed at the start of the proof that $s=0$, $t=T$ and $u_0\in E_\delta$ for some sufficiently small $\delta$. We now treat the more general case when these assumptions do not hold. First, if $s\neq 0$, then almost surely there exists a $\delta > 0$ such that $u_s \in E_\delta$ (this is noted in the proof of Lemma \ref{Lemma D delta}). The proof of this case now proceeds exactly as above, with $\tau^0$ redefined to be $s$. Second, suppose that $s=0$ but $u_0 \notin E_\delta$ for any $\delta > 0$. Since $\left|\mathcal{D}_{\delta,T}\right| \to 0$, it follows that $\tau^0 \to 0$ as $\delta \to 0$, and the result still holds.
\end{proof}

\subsection{Auxiliary Lemmas}
\label{auxiliary lemmas}

\begin{lemma}\label{Lemma E delta temporary}
Suppose that $u\in E_\delta^M$. Then for all $\alpha\in [\Gamma(u)-\delta,\Gamma(u)+\delta]$, $\alpha \neq \Gamma(u)$,
\begin{equation*}
\norm{u-\varphi_\alpha} - \norm{u-\varphi_{\Gamma(u)}} >  \frac{ \delta(\alpha-\Gamma(u))^2}{\delta\norm{\varphi_0'}+ 2M}.
\end{equation*}
\end{lemma}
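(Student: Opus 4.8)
The plan is to reduce the statement to a second-order Taylor expansion of $g(\alpha):=\norm{u-\varphi_\alpha}^2$ about its unique global minimizer $\Gamma(u)$, and then to convert the resulting lower bound on the squared-norm gap into one for the norm gap via the identity $a-b=(a^2-b^2)/(a+b)$.

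First I would record that, since $u-\varphi_0\in H$ and $\alpha\mapsto\varphi_\alpha-\varphi_0$ is twice continuously differentiable in the $H$-norm (Assumption~\ref{assump varphi}(i)), $g$ is $C^2$ on $\R$ with $g'(\alpha)=-2\langle u-\varphi_\alpha,\varphi'_\alpha\rangle$ and $g''(\alpha)=-2\langle u,\varphi''_\alpha\rangle=2\gamma(\alpha,u)$, the last equality being exactly the computation already carried out when $\gamma$ was introduced in \eqref{gamma} (write $u=v^0+\varphi_0$ with $v^0:=u-\varphi_0\in H$ and integrate by parts). Since $\Gamma(u)$ minimizes $g$ over $\R$ we have $g'(\Gamma(u))=0$, and since $u\in E_\delta$ it holds that $g''=2\gamma(\cdot,u)>2\delta$ on $[\Gamma(u)-\delta,\Gamma(u)+\delta]$. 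Applying Taylor's theorem with Lagrange remainder on this interval then gives, for any $\alpha$ there with $\alpha\neq\Gamma(u)$, a point $\xi$ strictly between $\Gamma(u)$ and $\alpha$ (hence in the same interval) with $g(\alpha)-g(\Gamma(u))=\tfrac12 g''(\xi)(\alpha-\Gamma(u))^2>\delta(\alpha-\Gamma(u))^2>0$.

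Next I would bound the denominator $\norm{u-\varphi_\alpha}+\norm{u-\varphi_{\Gamma(u)}}$. Global minimality of $\Gamma(u)$ together with the property $\norm{u-\varphi_0}<M$ built into the definition of $E_\delta^M$ gives $\norm{u-\varphi_{\Gamma(u)}}\le\norm{u-\varphi_0}<M$; and writing $\varphi_\alpha-\varphi_{\Gamma(u)}=\int_{\Gamma(u)}^\alpha\varphi'_s\,ds$ and using that $\norm{\varphi'_s}=\norm{\varphi'_0}$ is independent of $s$ (Assumption~\ref{assump varphi}(ii)) with $|\alpha-\Gamma(u)|\le\delta$ yields $\norm{u-\varphi_\alpha}\le\norm{u-\varphi_{\Gamma(u)}}+\norm{\varphi_\alpha-\varphi_{\Gamma(u)}}<M+\delta\norm{\varphi'_0}$. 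Hence the denominator is strictly positive (because $g(\alpha)>0$) and strictly less than $\delta\norm{\varphi'_0}+2M$; since the numerator exceeds $\delta(\alpha-\Gamma(u))^2$, combining through
\[
\norm{u-\varphi_\alpha}-\norm{u-\varphi_{\Gamma(u)}}=\frac{g(\alpha)-g(\Gamma(u))}{\norm{u-\varphi_\alpha}+\norm{u-\varphi_{\Gamma(u)}}}
\]
gives the desired strict inequality.

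I do not anticipate a genuine obstacle; the only point requiring care is the identity $g''(\alpha)=2\gamma(\alpha,u)$ in the case $\varphi_0\notin H$ (e.g.\ the travelling-front example), but this is precisely the interpretation and integration-by-parts computation already performed in the text when $\gamma$ is defined, so it may simply be quoted.
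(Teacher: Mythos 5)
Your proposal is correct and follows essentially the same route as the paper's proof: a second-order Taylor expansion of $g(\alpha)=\norm{u-\varphi_\alpha}^2$ about the global minimizer $\Gamma(u)$ (so that the first-order term vanishes and the curvature is $\gamma>\delta$ on the $\delta$-interval), followed by the difference-of-squares factorization and the same two bounds on the denominator, namely $\norm{u-\varphi_{\Gamma(u)}}\le\norm{u-\varphi_0}<M$ from global minimality and $\norm{\varphi_\alpha-\varphi_{\Gamma(u)}}\le|\alpha-\Gamma(u)|\,\norm{\varphi_0'}\le\delta\norm{\varphi_0'}$. The only cosmetic difference is the order in which the triangle inequality is applied to bound the denominator, which does not change the argument.
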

\begin{proof}
By Taylor's theorem, for all $\alpha\in [\Gamma(u)-\delta,\Gamma(u)+\delta]$, $\alpha\neq\Gamma(u)$, for some $\bar{\alpha}$ between $\alpha$ and $\Gamma(u)$
\begin{equation*}
\norm{u-\varphi_\alpha}^2 = \norm{u-\varphi_{\Gamma(u)}}^2 +\gamma(\bar{\alpha},u)(\alpha-\Gamma(u))^2 > \norm{u-\varphi_{\Gamma(u)}}^2 + \delta (\alpha-\Gamma(u))^2,
\end{equation*}
the last inequality following from the definition of $E_\delta^M$. Hence
\begin{equation*}
(\norm{u-\varphi_\alpha} - \norm{u-\varphi_{\Gamma(u)}}) (\norm{u-\varphi_\alpha} + \norm{u-\varphi_{\Gamma(u)}}) > \delta (\alpha-\Gamma(u))^2,
\end{equation*}
so that
\begin{align*}
\norm{u-\varphi_\alpha} - \norm{u-\varphi_{\Gamma(u)}} >& \frac{ \delta(\alpha-\Gamma(u))^2}{\norm{u-\varphi_\alpha} + \norm{u-\varphi_{\Gamma(u)}}} 
\geq\frac{ \delta(\alpha-\Gamma(u))^2}{\norm{\varphi_{\Gamma(u)}-\varphi_\alpha} + 2\norm{u-\varphi_{\Gamma(u)}}}.
\end{align*}
Now again by Taylor's theorem, for some $\lambda \in [0,1]$, it holds that $\varphi_\alpha - \varphi_{\Gamma(u)} = ((1-\lambda)\varphi_\alpha' + \lambda\varphi_{\Gamma(u)}')(\alpha-\Gamma(u))$. Thus
\[
\norm{\varphi_\alpha - \varphi_{\Gamma(u)}}\leq |\alpha-\Gamma(u)|\norm{\varphi_0'}.
\]
Therefore, making use of the definition of $E_\delta^M$,
\begin{align*}
\norm{u-\varphi_\alpha} - \norm{u-\varphi_{\Gamma(u)}} > & \frac{ \delta(\alpha-\Gamma(u))^2}{ |\alpha-\Gamma(u)|\norm{\varphi_0'}+ 2\norm{u-\varphi_{\Gamma(u)}}}\nonumber\\
\geq & \frac{ \delta(\alpha-\Gamma(u))^2}{ |\alpha-\Gamma(u)|\norm{\varphi_0'}+ 2M}
\geq  \frac{ \delta(\alpha-\Gamma(u))^2}{\delta\norm{\varphi_0'}+ 2M}.
\end{align*}
\end{proof}

\begin{lemma}\label{Lemma E delta nested}
If $\delta_1 < \delta_2$, then $E^M_{\delta_2}\subset E^M_{\delta_1}$.
\end{lemma}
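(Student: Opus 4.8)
The plan is to verify directly the three defining conditions of $E^M_{\delta_1}$, starting from the hypothesis $u\in E^M_{\delta_2}$ with $\delta_1<\delta_2$. Since $E^M_{\delta_2}\subset E_{\delta_2}$, the point $u$ already lies in $E$ and admits a unique global minimiser $\Gamma(u)$ of $\alpha\mapsto\norm{u-\varphi_\alpha}^2$; these properties, together with the bound $\norm{u-\varphi_0}<M$, transfer to $E^M_{\delta_1}$ verbatim. For the curvature condition (iii) in the definition of $E_{\delta_1}$, observe that $[\Gamma(u)-\delta_1,\Gamma(u)+\delta_1]\subset[\Gamma(u)-\delta_2,\Gamma(u)+\delta_2]$, so for every $\alpha$ in the smaller interval we get $\gamma(\alpha,u)>\delta_2>\delta_1$ from the corresponding property for $\delta_2$; hence $u\in E_{\delta_1}$.

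The only substantive point is condition (iii) in the definition of $E^M_{\delta_1}$: for all $\alpha\notin[\Gamma(u)-\delta_1,\Gamma(u)+\delta_1]$ one must show $\norm{u-\varphi_\alpha}>\norm{u-\varphi_{\Gamma(u)}}+\delta_1^3(\delta_1\norm{\varphi_0'}+2M)^{-1}$. I would split this into the two regimes $|\alpha-\Gamma(u)|>\delta_2$ and $\delta_1<|\alpha-\Gamma(u)|\le\delta_2$. In the first regime, condition (iii) of $E^M_{\delta_2}$ already gives the inequality with $\delta_2$ in place of $\delta_1$, so it suffices to note the elementary fact that $\delta\mapsto\delta^3/(\delta\norm{\varphi_0'}+2M)$ is increasing on $(0,\infty)$ (its derivative equals $(2\norm{\varphi_0'}\delta^3+6M\delta^2)/(\delta\norm{\varphi_0'}+2M)^2>0$), whence the $\delta_2$-gap dominates the $\delta_1$-gap. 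In the second regime I would apply Lemma~\ref{Lemma E delta temporary} with parameter $\delta_2$, which yields $\norm{u-\varphi_\alpha}-\norm{u-\varphi_{\Gamma(u)}}>\delta_2(\alpha-\Gamma(u))^2/(\delta_2\norm{\varphi_0'}+2M)$; since here $(\alpha-\Gamma(u))^2>\delta_1^2$, the right-hand side exceeds $\delta_2\delta_1^2/(\delta_2\norm{\varphi_0'}+2M)$, and a one-line cross-multiplication reduces the required bound $\delta_2\delta_1^2/(\delta_2\norm{\varphi_0'}+2M)\ge\delta_1^3/(\delta_1\norm{\varphi_0'}+2M)$ to $2M\delta_2\ge2M\delta_1$, which holds since $\delta_2>\delta_1$ and $M>0$.

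There is no genuine obstacle here; the statement is essentially monotonicity bookkeeping in $\delta$. The one thing that must be handled with care, and which I would flag as the crux, is that the annulus $\delta_1<|\alpha-\Gamma(u)|\le\delta_2$ is \emph{not} covered by condition (iii) of $E^M_{\delta_2}$ and must instead be controlled via the Taylor estimate of Lemma~\ref{Lemma E delta temporary}, applied with the larger parameter $\delta_2$ rather than $\delta_1$. Assembling the two regimes together with the transferred conditions above gives all defining properties of $E^M_{\delta_1}$, hence the inclusion $E^M_{\delta_2}\subset E^M_{\delta_1}$.
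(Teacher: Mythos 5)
Your proof is correct and follows the same route as the paper: the trivially transferred conditions, the split into the regime $|\alpha-\Gamma(u)|>\delta_2$ (handled by monotonicity of $\delta\mapsto\delta^3/(\delta\norm{\varphi_0'}+2M)$) and the annulus $\delta_1<|\alpha-\Gamma(u)|\le\delta_2$ (handled via Lemma~\ref{Lemma E delta temporary} applied with parameter $\delta_2$). You supply slightly more detail than the paper by exhibiting the derivative and the cross-multiplication, but the argument is the same.
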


\begin{proof} 
For $u\in E^M_{\delta_2}$, to show that $u\in E^M_{\delta_1}$ the only thing that is slightly difficult to check is that 
for all $\alpha\in\R\backslash[\Gamma(u)-\delta_1,\Gamma(u)+\delta_1]$, $\norm{u - \varphi_{\alpha}}> \norm{u - \phi_{\Gamma(u)}}  +  \delta_1^3(\delta_1\norm{\varphi_0'}+ 2M)^{-1}$.
For $\alpha\in\R\backslash[\Gamma(u)-\delta_2,\Gamma(u)+\delta_2]$ this follows from the fact that $u\in E^M_{\delta_2}$ and $\delta_1 < \delta_2$.  If $\alpha\in[\Gamma(u)-\delta_2,\Gamma(u)+\delta_2]$ but $\alpha\not\in[\Gamma(u)-\delta_1,\Gamma(u)+\delta_1]$ then by Lemma \ref{Lemma E delta temporary} 
\[
\norm{u - \varphi_{\alpha}} - \norm{u - \phi_{\Gamma(u)}}  >  \delta_2\delta_1^2(\delta_2\norm{\varphi_0'}+ 2M)^{-1} >\delta_1^3(\delta_1\norm{\varphi_0'}+ 2M)^{-1}. 
\]
\end{proof}

\begin{lemma}\label{Lemma Uniform Continuity Gamma}
For all $\theta \in (0, \delta)$, there exists $\zeta>0$ such that for all $u,w\in E_\delta^M$, if $\norm{u-w} \leq \zeta$ then $|\Gamma(u)-\Gamma(w)| \leq \theta$.
\end{lemma}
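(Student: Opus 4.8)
The plan is to argue by contradiction, exploiting the two quantitative "growth away from the minimum" estimates that membership in $E_\delta^M$ provides. The only soft input is the uniform comparison $|\,\norm{u-\varphi_\alpha}-\norm{w-\varphi_\alpha}\,|\le\norm{u-w}$ for every $\alpha$, which is just the reverse triangle inequality. First I would combine this with the minimality of $\Gamma(u)$ and $\Gamma(w)$: chaining $\norm{u-\varphi_{\Gamma(w)}}\le\norm{w-\varphi_{\Gamma(w)}}+\norm{u-w}\le\norm{w-\varphi_{\Gamma(u)}}+\norm{u-w}\le\norm{u-\varphi_{\Gamma(u)}}+2\norm{u-w}$ shows that $\varphi_{\Gamma(w)}$ is a $2\zeta$-near-minimiser of $\alpha\mapsto\norm{u-\varphi_\alpha}$, i.e.
\[
\norm{u-\varphi_{\Gamma(w)}}-\norm{u-\varphi_{\Gamma(u)}}\le 2\zeta .
\]
(Note only the minimising property, not the uniqueness in the definition of $E_\delta$, is needed here.)

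Next I would feed this into the defining properties of $E^M_\delta$ for the point $u$. If $\Gamma(w)$ lay outside the window $[\Gamma(u)-\delta,\Gamma(u)+\delta]$, the third clause in the definition of $E^M_\delta$ forces $\norm{u-\varphi_{\Gamma(w)}}-\norm{u-\varphi_{\Gamma(u)}}>\delta^3(\delta\norm{\varphi_0'}+2M)^{-1}$, contradicting the displayed bound once $\zeta$ is small. If instead $\Gamma(w)$ lies inside the window but $|\Gamma(w)-\Gamma(u)|>\theta$, I would invoke Lemma \ref{Lemma E delta temporary} (which is where the curvature bound $\gamma>\delta$ built into $E_\delta$ is used) to get the quadratic lower bound $\norm{u-\varphi_{\Gamma(w)}}-\norm{u-\varphi_{\Gamma(u)}}>\delta\theta^2(\delta\norm{\varphi_0'}+2M)^{-1}$, again a contradiction for $\zeta$ small. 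Since $\theta<\delta$, the second threshold is the smaller of the two, so it suffices to take, for instance,
\[
\zeta:=\tfrac12\,\delta\theta^2\bigl(\delta\norm{\varphi_0'}+2M\bigr)^{-1},
\]
which manifestly depends only on $\theta,\delta,M$ (and the $\alpha$-independent constant $\norm{\varphi_0'}$), not on the particular $u,w\in E^M_\delta$. This forces $|\Gamma(u)-\Gamma(w)|\le\theta$ and completes the argument.

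There is no real obstacle in this lemma: the estimate of Lemma \ref{Lemma E delta temporary} together with the third clause of the definition of $E^M_\delta$ were designed precisely to make this statement true, and the work reduces to the near-minimiser inequality above plus a case split. The only point demanding a little care is the bookkeeping of constants, namely checking that a single choice of $\zeta$ simultaneously defeats both the "far" case $|\Gamma(w)-\Gamma(u)|>\delta$ and the "near" case $\theta<|\Gamma(w)-\Gamma(u)|\le\delta$; this is handled by the observation that $\theta<\delta$ makes the near-case threshold the binding one.
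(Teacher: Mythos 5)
Your proposal is correct and follows essentially the same route as the paper's proof: both arguments use the triangle inequality (applied twice) to show that the minimizer of the perturbed function cannot have moved far, then split into the ``far'' case handled by clause (iii) of the definition of $E^M_\delta$ and the ``near'' case handled by Lemma \ref{Lemma E delta temporary}, arriving at the same choice $\zeta = \tfrac{1}{2}\delta\theta^2(\delta\norm{\varphi_0'}+2M)^{-1}$. The only cosmetic difference is that you phrase it as a direct contradiction around $\Gamma(w)$, whereas the paper shows the infimum of $\norm{w-\varphi_\alpha}$ outside the $\theta$-window strictly exceeds $\norm{w-\varphi_{\Gamma(u)}}$; the constants and ingredients are identical.
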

\begin{proof}
Let  $\theta \in (0, \delta)$ and define $\zeta = \frac{1}{2}\theta^2\delta(\delta\norm{\varphi_0'}+ 2M)^{-1}$. Assume that $\norm{u-w}\leq \zeta$. Now
\begin{equation*}
\norm{w-\varphi_{\Gamma(u)}} \leq \norm{u-\varphi_{\Gamma(u)}} + \norm{w-u}
 \leq \norm{u-\varphi_{\Gamma(u)}}+ \zeta.
\end{equation*}
Thus the Lemma will follow if we can establish the following claim.

\vspace{0.3cm}
\noindent\textit{Claim:} $\inf_{\alpha \notin [\Gamma(u)-\theta,\Gamma(u)+\theta]}\norm{w-\varphi_\alpha} >  \norm{u-\varphi_{\Gamma(u)}} + \zeta$.

\vspace{0.3cm}
\noindent Indeed, if this claim is true we then have that
\[
\norm{w-\varphi_{\Gamma(u)}} <\inf_{\alpha \notin [\Gamma(u)-\theta,\Gamma(u)+\theta]}\norm{w-\varphi_\alpha}
\]
so that $\Gamma(w)$ is certainly within a distance $\theta$ of $\Gamma(u)$.
To prove the claim, from the reverse triangle inequality, 
\[
\inf_{\alpha \notin [\Gamma(u)-\theta,\Gamma(u)+\theta]}\norm{w-\varphi_\alpha} \geq \inf_{\alpha \notin [\Gamma(u)-\theta,\Gamma(u)+\theta]}\norm{u-\varphi_\alpha} - \norm{w-u}.
\]
Suppose that $\inf_{\alpha \notin [\Gamma(u)-\theta,\Gamma(u)+\theta]}\norm{u-\varphi_\alpha} = \norm{u-\varphi_{\tilde{\alpha}}}$. Then $\tilde{\alpha}\in\R\backslash (\Gamma(u)-\theta,\Gamma(u)+\theta)$.  We claim that $\norm{u-\varphi_{\tilde{\alpha}}} > \norm{u-\varphi_{\Gamma(u)}} + \theta^2\delta(\delta\norm{\varphi_0'}+ 2M)^{-1}$. To see this, if $\tilde{\alpha}\not\in [\Gamma(u)-\delta,\Gamma(u)+\delta]$, then by the definition of $E_\delta^M$, 
\[
\norm{u-\varphi_{\tilde{\alpha}}}> \norm{u-\varphi_{\Gamma(u)}} + \delta^3(\delta\norm{\varphi_0'}+ 2M)^{-1}>  \norm{u-\varphi_{\Gamma(u)}} + \theta^2\delta(\delta\norm{\varphi_0'}+ 2M)^{-1}.
\] 
On the other hand if $\tilde{\alpha}\in [\Gamma(u)-\delta,\Gamma(u)+\delta]$ (recall that $\tilde{\alpha}\in\R\backslash (\Gamma(u)-\theta,\Gamma(u)+\theta)$), by Lemma \ref{Lemma E delta temporary} we have that 
\begin{align*}
\norm{u-\varphi_{\tilde{\alpha}}} > \norm{u-\varphi_{\Gamma(u)}} + \theta^2\delta(\delta\norm{\varphi_0'}+ 2M)^{-1}.
\end{align*}
\end{proof}


\begin{lemma}\label{Lemma E bar}
Let $\bar{E}:= \bigcup_{\delta\in(0,1)}E^{\delta^{-1}}_{\delta}$.  Then $\bar{E}$ is the set of all $u \in E$ such that $\alpha\mapsto \norm{u - \varphi_\alpha}$ has a unique global minimum $\Gamma(u)$ such that $\gamma(\Gamma(u),u) > 0$.
\end{lemma}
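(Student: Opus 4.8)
The plan is to prove the two inclusions separately. The inclusion $\bar E\subseteq\{u\in E:\ \alpha\mapsto\|u-\varphi_\alpha\|\text{ has a unique global minimum }\Gamma(u)\text{ with }\gamma(\Gamma(u),u)>0\}$ is immediate: if $u\in E^{\delta^{-1}}_\delta$ for some $\delta\in(0,1)$ then $u\in E_\delta$ by definition, and the defining conditions of $E_\delta$ already supply a unique global minimizer $\Gamma(u)$ of $\alpha\mapsto\|u-\varphi_\alpha\|^2$ together with $\gamma(\Gamma(u),u)>\delta>0$ (put $\alpha=\Gamma(u)$ in condition (iii) of the definition of $E_\delta$). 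For the reverse inclusion, fix $u\in E$ with a unique global minimizer $\Gamma(u)$ of $\alpha\mapsto\|u-\varphi_\alpha\|^2$, set $c_0:=\gamma(\Gamma(u),u)>0$ and $m^*:=\|u-\varphi_{\Gamma(u)}\|$, and fix any $M>\|u-\varphi_0\|$. I would first reduce the task to exhibiting a single $\delta_0$ with $u\in E^M_{\delta_0}$: by Lemma \ref{Lemma E delta nested}, $E^M_{\delta_0}\subseteq E^M_\delta$ for $\delta<\delta_0$, while $E^M_\delta\subseteq E^{\delta^{-1}}_\delta$ whenever $\delta<1/M$ (replacing $M$ by the larger value $\delta^{-1}$ only shrinks the gap $\delta^3(\delta\|\varphi_0'\|+2M)^{-1}$ in condition (iii) and weakens condition (ii)), so that $u\in E^M_{\delta_0}$ forces $u\in\bar E$.

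Since $\alpha\mapsto\gamma(\alpha,u)$ is continuous (Assumption \ref{assump varphi}(ii)), there is $\delta_1>0$ with $\gamma(\alpha,u)>c_0/2$ for all $\alpha\in[\Gamma(u)-\delta_1,\Gamma(u)+\delta_1]$. Applying the second-order Taylor expansion of $\alpha\mapsto\|u-\varphi_\alpha\|^2$ exactly as in the proof of Lemma \ref{Lemma E delta temporary} — its first derivative vanishes at the minimizer $\Gamma(u)$ and its second derivative is $2\gamma(\cdot,u)$ — gives $\|u-\varphi_\alpha\|^2\ge(m^*)^2+\tfrac{c_0}{2}(\alpha-\Gamma(u))^2$ on that interval, hence $\|u-\varphi_\alpha\|-m^*\ge c'(\alpha-\Gamma(u))^2$ there, where $c':=(c_0/2)\big(m^*+\sqrt{(m^*)^2+c_0\delta_1^2/2}\,\big)^{-1}>0$, using $\sqrt{(m^*)^2+s}-m^*=s\big(\sqrt{(m^*)^2+s}+m^*\big)^{-1}$.

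The crucial step is a uniform separation estimate: there exists $\epsilon_1>0$ with $\|u-\varphi_\alpha\|\ge m^*+\epsilon_1$ whenever $|\alpha-\Gamma(u)|\ge\delta_1$. On each compact annulus $\delta_1\le|\alpha-\Gamma(u)|\le R$ the continuous map $\alpha\mapsto\|u-\varphi_\alpha\|$ attains a minimum strictly exceeding $m^*$ by uniqueness of the global minimizer, so it remains only to control $\|u-\varphi_\alpha\|$ for $|\alpha|$ large, where Assumption \ref{assump varphi}(iv) enters. In subcase (a), $\|u-\varphi_\alpha\|\ge\|\varphi_\alpha-\varphi_0\|-\|u-\varphi_0\|\to\infty$. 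In subcase (b), $\|u-\varphi_\alpha\|\to\|\varphi_0\|+\|u\|$, and one must check this limit is \emph{strictly} above $m^*$: the equality $m^*=\|u-\varphi_{\Gamma(u)}\|=\|\varphi_0\|+\|u\|$ would be the extremal case of the triangle inequality, forcing $u$ to be a non-positive scalar multiple of $\varphi_{\Gamma(u)}$, whence (using $\|\varphi_\alpha\|\equiv\|\varphi_0\|$) the map $\alpha\mapsto\|u-\varphi_\alpha\|^2=\|u\|^2+\|\varphi_0\|^2-2\langle u,\varphi_\alpha\rangle$ would attain its \emph{maximum}, not its minimum, at $\Gamma(u)$ by Cauchy--Schwarz, contradicting the hypothesis. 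So in both subcases there is $R$ with $\|u-\varphi_\alpha\|\ge m^*+\epsilon_1$ for $|\alpha-\Gamma(u)|\ge R$, and combining this with the annulus bound proves the estimate.

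Finally, combining the quadratic bound near $\Gamma(u)$ with the separation estimate gives, for every $0<\delta\le\delta_1$ and every $\alpha$ with $|\alpha-\Gamma(u)|\ge\delta$, the bound $\|u-\varphi_\alpha\|-m^*\ge\min(c'\delta^2,\epsilon_1)$. I would then pick $\delta_0\in(0,1)$ small enough that $\delta_0<\delta_1$, $\delta_0<c_0/2$, and $\min(c'\delta_0^2,\epsilon_1)>\delta_0^3(\delta_0\|\varphi_0'\|+2M)^{-1}$ — the last being achievable since its right-hand side is at most $\delta_0^3/(2M)$ while its left-hand side is of order $\delta_0^2$ as $\delta_0\downarrow0$. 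The first two choices yield $u\in E_{\delta_0}$ (uniqueness of $\Gamma(u)$ is the hypothesis, and $\gamma(\cdot,u)>c_0/2>\delta_0$ on the $\delta_0$-neighborhood of $\Gamma(u)$), the choice of $M$ yields condition (ii), and the third yields condition (iii) of $E^M_{\delta_0}$; hence $u\in E^M_{\delta_0}\subseteq\bar E$, completing the argument. I expect the main obstacle to be the subcase-(b) part of the separation estimate, namely ruling out the degenerate equality $m^*=\|\varphi_0\|+\|u\|$; the remainder is routine compactness, continuity of $\gamma(\cdot,u)$, and Taylor expansion.
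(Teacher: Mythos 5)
Your proof is correct and follows essentially the same route as the paper: the easy inclusion is definitional, and for the reverse inclusion you use continuity of $\gamma(\cdot,u)$ near $\Gamma(u)$, a compactness argument in a bounded range of $\alpha$, and the tail behavior from Assumption~\ref{assump varphi}(iv) to get the strict separation required by the third condition of $E^M_\delta$, then take $\delta$ small enough. Two small remarks: your explicit formula for $c'$ uses $c_0/2$ as though it were an \emph{upper} bound on $\gamma(\bar\alpha,u)$ in the Taylor remainder, whereas you only established the lower bound $\gamma>c_0/2$ — the existence of a valid $c'>0$ is still immediate, e.g.\ via $\|u-\varphi_\alpha\|\le m^*+\|\varphi'_0\|\,|\alpha-\Gamma(u)|$; and your subcase-(b) degeneracy argument, while correct, can be shortened, since if $m^*=\|\varphi_0\|+\|u\|$ then (as shown in Lemma~\ref{beta existence}) the triangle inequality gives $\|u-\varphi_\alpha\|\le\|\varphi_0\|+\|u\|$ for all $\alpha$, so the map $\alpha\mapsto\|u-\varphi_\alpha\|$ would be identically $m^*$, which directly contradicts uniqueness of the global minimizer.
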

\begin{proof}
Suppose that $u \in E$ is such that $\alpha\mapsto \norm{u - \varphi_\alpha}$ has a unique global minimum $\Gamma(u)$ and $\gamma(\Gamma(u),u) > 0$. We prove that there exists a $\bar{\delta}$ such that $u\in E^{\bar{\delta}^{-1}}_{\bar{\delta}}$. It follows from the continuity of $\gamma$ that if $\gamma(\Gamma(u),u) > 0$, then there exists some $\delta > 0$ such that $\gamma(\alpha,u) > \delta$ for all $\alpha$ in some neighborhood $[\Gamma(u)-\delta,\Gamma(u)+\delta]$ of $\Gamma(u)$. 

Suppose for a contradiction that there is a sequence of points $\alpha_j \notin [\Gamma(u)-\delta,\Gamma(u)+\delta]$ such that $\norm{u - \varphi_{\alpha_j}} \to \norm{u - \varphi_{\Gamma(u)}}$ as $j\to \infty$. By Lemma \ref{beta existence}, there must exist a compact set $K$ such that $\alpha_j \in K$ for all $j$. Therefore there must exist a $\xi \in K$ such that for a subsequence $p_j$, $\alpha_{p_j} \to \xi$. By continuity, $\norm{u-\varphi_\xi} = \norm{u -\varphi_{\Gamma(u)}}$. This contradicts the uniqueness of the global minimum of $u$. Therefore there must exist a $\kappa$ such that for all $\alpha \notin [\Gamma(u)-\delta,\Gamma(u)+\delta]$, $\norm{u - \varphi_\alpha} > \norm{u - \varphi_{\Gamma(u)}} + \kappa$. Let $\delta^*$ be such $(\delta^*)^3 / (\norm{\varphi_0'}\delta^* + 2M) < \kappa$.

Let $\bar{\delta} \leq \rm{min}\left( \delta,\delta^*\right)$. It may be seen that $u \in E^{\bar{\delta}^{-1}}_{\bar{\delta}}$, if $\bar{\delta}^{-1}\geq \norm{u-\varphi_0}$.
\end{proof}

\begin{lemma}\label{Lemma D delta}
Under Assumptions \ref{Assmpt Long Time 1} and \ref{Assmpt Long Time 2}, $|\mathcal{D}_{\delta, T} |$ tends to $0$ as $\delta\to 0$ almost surely.
\end{lemma}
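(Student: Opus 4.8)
The plan is to deduce the claim from the fact that, almost surely, $u_t\in\bar E$ for Lebesgue-almost every $t$, where $\bar E$ is the set characterized in Lemma \ref{Lemma E bar}. First, by the very construction of the stopping times $\tau^n$, one has $u_t\notin E^{\delta^{-1}}_{\delta}$ for every $t\in(\tau^{2k+1},\tau^{2k+2})$, so that $\mathcal D_{\delta,T}\subseteq\{t\in[0,T]:u_t\notin E^{\delta^{-1}}_{\delta}\}$. By Lemma \ref{Lemma E delta nested} together with the monotonicity $E^{M}_{\delta}\subseteq E^{M'}_{\delta}$ for $M\le M'$, the sets $E^{\delta^{-1}}_{\delta}$ increase as $\delta\downarrow 0$, and by Lemma \ref{Lemma E bar} their union is $\bar E$; hence $\one_{\{u_t\notin E^{\delta^{-1}}_{\delta}\}}\downarrow\one_{\{u_t\notin\bar E\}}$ pointwise in $t$, and by dominated convergence on $[0,T]$,
\[
\limsup_{\delta\to 0}\,|\mathcal D_{\delta,T}|\ \le\ \lim_{\delta\to 0}\big|\{t\in[0,T]:u_t\notin E^{\delta^{-1}}_{\delta}\}\big|\ =\ \big|\{t\in[0,T]:u_t\notin\bar E\}\big|.
\]
Thus it is enough to show that $|\{t\in[0,T]:u_t\notin\bar E\}|=0$ almost surely, and by Tonelli's theorem (using that $(t,\omega)\mapsto u_t$ is jointly measurable and that $\bar E$ is a Borel subset of $E$) this reduces to showing that, for every fixed $t>0$, $\P(u_t\in\bar E)=1$ — equivalently, by Lemma \ref{Lemma E bar}, that almost surely $\alpha\mapsto\norm{u_t-\varphi_\alpha}^2$ has a \emph{unique} global minimizer, at which the curvature $\gamma(\,\cdot\,,v^0_t)$ is strictly positive. (The value $t=0$ may be discarded as a single time point.)

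Now fix $t>0$. By Girsanov's theorem (cf.\ \cite{D-Z}: the drift $v\mapsto f(\varphi_0+v)-f(\varphi_0)$ is globally Lipschitz and $v^0$ does not explode, so the Girsanov density is well defined after the usual localization), the law of $v^0_t$ is mutually absolutely continuous with respect to the law of the Gaussian random variable $\tilde v_t:=P^A_t v^0_0+\varepsilon\int_0^t P^A_{t-r}B(r)\,dW^Q_r$; consequently it suffices to establish the two assertions above with $u_t$ replaced by $\tilde u_t:=\varphi_0+\tilde v_t$. For the uniqueness of the global minimizer, expand $\norm{\tilde u_t-\varphi_\alpha}^2=\norm{\tilde v_t}^2-2G_\alpha+\norm{\varphi_\alpha-\varphi_0}^2$ with $G_\alpha:=\langle\tilde v_t,\varphi_\alpha-\varphi_0\rangle$, so that minimizing over $\alpha$ amounts to maximizing the almost surely Lipschitz (hence continuous) Gaussian field $\alpha\mapsto 2G_\alpha-\norm{\varphi_\alpha-\varphi_0}^2$; this maximum is attained thanks to Assumption \ref{assump varphi} (iv), and for $\alpha\ne\beta$
\[
\var(G_\alpha-G_\beta)=\varepsilon^2\langle\varphi_\alpha-\varphi_\beta,Q_t(\varphi_\alpha-\varphi_\beta)\rangle>0
\]
by Assumption \ref{Assmpt Long Time 1}. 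The classical uniqueness-of-argmax criterion for Gaussian processes (sample paths almost surely continuous, supremum attained, no two distinct values perfectly correlated) then yields that the global minimizer is almost surely unique; in particular $\beta^*_t$ is almost surely unique, which settles the remaining claim of the theorem.

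It remains to rule out a \emph{degenerate} global minimizer. As $\varphi^{(k)}_\alpha\in H$ for $k\in\{1,2,3\}$ (Assumption \ref{assump varphi} (i)) and $\varphi''''_\alpha\in H$ exists and is uniformly bounded (Assumption \ref{Assmpt Long Time 2}), the map $\alpha\mapsto m(t,\alpha):=\norm{\tilde u_t-\varphi_\alpha}^2$ is almost surely of class $C^4$, its $\alpha$-derivatives are affine functionals of $\tilde v_t$, and $\partial_\alpha^2 m(t,\alpha)=2\gamma(\alpha,\tilde v_t)$. Hence a global minimizer $\alpha^*$ with $\gamma(\alpha^*,\tilde v_t)=0$ must satisfy $\partial_\alpha m=\partial_\alpha^2 m=\partial_\alpha^3 m=0$ at $\alpha^*$, i.e.\ $L(\alpha^*)=d(\alpha^*)$, where $L(\alpha):=(\langle\tilde v_t,\varphi'_\alpha\rangle,\langle\tilde v_t,\varphi''_\alpha\rangle,\langle\tilde v_t,\varphi'''_\alpha\rangle)$ is an $\R^3$-valued Gaussian process in $\alpha$ with $C^1$ paths and covariance $\varepsilon^2 K^\varphi(\alpha,t)$ at each $\alpha$, and $d$ is a deterministic $C^1$ curve. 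Since, by Assumption \ref{assump varphi} (iv) and continuity of $\tilde v_t$, all global minimizers lie in a fixed bounded window $[-R,R]$, and since the eigendecomposition $K^\varphi=(O^\varphi)^{T}\Lambda^\varphi O^\varphi$ may be taken continuous in $\alpha$ with at most one vanishing eigenvalue (Assumption \ref{Assmpt Long Time 2}), one covers $[-R,R]$ by finitely many pieces on each of which two coordinates of the rotated process $O^\varphi L$ form a uniformly non-degenerate $\R^2$-valued Gaussian process with uniformly bounded joint density: on the open part where every $\lambda^\varphi_i$ is bounded below this is immediate, while on the closed set where one $\lambda^\varphi_i$ vanishes the corresponding rotated coordinate is almost surely equal to its (deterministic) mean, so that the scalar equation it contributes becomes a deterministic restriction on $\alpha$, the other two coordinates staying non-degenerate. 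On each piece a Bulinskaya-type lemma — if $X,Y$ are real Gaussian processes on a bounded interval, $X$ has $C^1$ sample paths, and $(X,Y)$ has a uniformly bounded joint density, then almost surely no point satisfies $X=Y=0$ — shows that the corresponding part of the degenerate-minimizer event is null. Summing over the finitely many pieces gives $\P(\tilde u_t\in\bar E)=1$, whence $\P(u_t\in\bar E)=1$, and the lemma follows. The crux of the argument — and the step I expect to be the main obstacle — is precisely this last one: promoting the \emph{pointwise} (two-directional) non-degeneracy of $L(\alpha)$ to a statement over the entire uncountable family of candidate $\alpha$, while matching the possibly degenerate directions of $K^\varphi(\alpha,t)$ to pairs of equations to which Bulinskaya's lemma applies, which is exactly what the continuity of the eigendecomposition, the `at most one zero eigenvalue' hypothesis and the regularity $\varphi''''_\alpha\in H$ are designed to make work, with Assumption \ref{Assmpt Long Time 1} playing the analogous role for the uniqueness of the global minimum.
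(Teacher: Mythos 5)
Your proposal is correct and follows essentially the same route as the paper: reduce via a monotone/dominated convergence argument and Fubini--Tonelli to showing $\mathbb{P}(u_t\in\bar E)=1$ for a.e.\ fixed $t$, pass by Girsanov to the Gaussian process, get uniqueness of the argmax from the positive-variance condition of Assumption \ref{Assmpt Long Time 1} (the paper cites \cite[Lemma 2.6]{kim-pollard:93} for exactly the criterion you invoke), and rule out a degenerate minimizer by showing $\mathbb{P}(Z'_{\alpha,t}=Z''_{\alpha,t}=Z'''_{\alpha,t}=0\text{ for some }\alpha)=0$ using the eigenvalue condition of Assumption \ref{Assmpt Long Time 2}. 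The one place you are noticeably vaguer than the paper is that last step: you appeal to a ``Bulinskaya-type lemma'' together with an informal piecewise matching of non-degenerate directions, whereas the paper (Lemmas \ref{Lemma Z alpha derivatives}--\ref{Lemma constant CM}) carries this out by an explicit discretization of $[-M,M]$ into $n$ subintervals, a Taylor bound reducing the event on each subinterval to a small-ball event for the $\R^3$-valued Gaussian vector, and the uniform lower bound $\kappa>0$ on at least two eigenvalues of $\Lambda^\varphi$ (from compactness and continuity) to get an $O(n^{-1})$ bound; you would need to make that covering/small-ball step precise to have a complete proof, but the underlying idea is the same.
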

\begin{proof}
It suffices for us to prove that $\int_{\Omega} \int_{[0,T]}\one(u_t \notin E^{\delta^{-1}}_{\delta})dt d\mathbb{P} \to 0$  as $\delta \to 0$.
By Fubini's theorem,
\begin{equation*}
\int_{\Omega} \int_{[0,T]}\one(u_t \notin E^{\delta^{-1}}_{\delta})dt d\mathbb{P} =\int_{[0,T]} \int_{\Omega} \one(u_t \notin E^{\delta^{-1}}_{\delta}) d\mathbb{P}dt.
\end{equation*}
It thus suffices for us to show that for Lebesgue almost every $t\in[0,T]$,
$\mathbb{P}(u_t \notin E^{\delta^{-1}}_{\delta})\to 0$, as $\delta \to 0$.  Thanks to the inclusion relation of Lemma \ref{Lemma E delta nested}, this will follow if we can show that $\mathbb{P}(u_t \notin \bar{E}) = 0$ for almost every $t\in[0, T]$, where $\bar{E}:= \bigcup_{\delta\in(0,1)}E^{\delta^{-1}}_\delta$ is as in Lemma \ref{Lemma E bar}. Since $u_t = v^0_t + \varphi_0$, this is equivalent to showing that, for almost all $t\in[0,T]$,
\begin{equation}\label{eqn final ut not in E}
\mathbb{P}(v^0_t \notin \bar{E}-\varphi_0) = 0.
\end{equation}
We establish \eqref{eqn final ut not in E} using the Girsanov theorem. We recall the definition of the process $v^0 = (v^0_t)_{t\geq0}$ as the solution to 
$dv^0_t =\left(Av^0_t + f(\varphi_0 + v^0_t) - f(\varphi_0)\right)dt + B(t)dW_t$, and introduce the process $X=(X_t)_{t\geq0} \subset H$ as the solution to
\begin{align*}
dX_t :=& AX_tdt + B(t)dW_t,
\end{align*}
with $v^0_0 = X_0 $. 
Note that by the Lipschitz assumption on $f$
\begin{align*}
&\sup_{t\in [0,T]}\E\left[ \exp\left(\norm{ Q^{\frac{1}{2}} B(t)^{-1}(f(\varphi_0 + X_t) - f(\varphi_0))}\right)\right]  \leq \sup_{t\in [0,T]}\E\left[\exp\left( C\norm{X_t}\right)\right],
\end{align*}
for some constant $C$, using also Assumption \ref{assump noise} (ii). Since $(X_t)_{t\geq0}$ is a Gaussian process the right-hand side is finite. Thus the Girsanov theorem \cite[Theorem 10.18]{D-Z} applies. This means that the law of $v^0$ (which is a probability measure on $\mathcal{C}([0,T],H)$) is absolutely continuous with respect to the law of $X$.  Thus \eqref{eqn final ut not in E} will be satisfied if
\begin{equation}\label{eqn final Xt not in E}
\mathbb{P}(X_t+\varphi_0 \notin \bar{E}) = 0.
\end{equation}

To show \eqref{eqn final Xt not in E}, by Lemma \ref{Lemma E bar} it suffices to show that i) $\alpha\mapsto \norm{X_t + \varphi_0 - \varphi_\alpha}^2$ has a unique global minimum $\bar{\alpha}$ almost surely, and ii) $\gamma(\bar{\alpha}, X_t + \varphi_0) >0$.

To show i), let
\begin{equation} \label{eqn Zalpha}
Y_{\alpha,t} := 2\left\langle X_t,\varphi_\alpha-\varphi_0\right\rangle, \quad \mathrm{and} \quad Z_{\alpha,t} := Y_{\alpha,t} - \norm{\varphi_\alpha-\varphi_0}^2.
\end{equation}
Observe that $\inf_{\alpha\in\R} \norm{X_t + \varphi_0 - \varphi_\alpha}^2 = \inf_{\alpha\in\R} \left(\norm{X_t}^2 - Z_{\alpha,t}\right) = \|X_t\|^2 - \sup_{\alpha\in\R} Z_{\alpha,t}$.
It may thus be seen that $\bar{\alpha}$ is the unique global minimum of  $\alpha \mapsto \norm{X_t + \varphi_0 - \varphi_\alpha}^2$ if and only  $Z_{\bar{\alpha},t} > Z_{\alpha,t}$ for all $\alpha\neq\bar{\alpha}$.

Since $X_t$ is Gaussian, $(Z_{\alpha,t})_{\alpha\in\R}$ is a continuous $\R$-indexed Gaussian process, for fixed $t\in[0, T]$. We have that
\[
\bE\left[ (Z_{\alpha,t} -Z_{\beta,t} - \bE[Z_{\alpha,t}-Z_{\beta,t}])^2\right] = 4\langle \varphi_\alpha-\varphi_\beta,Q_t(\varphi_\alpha-\varphi_\beta)\rangle, \quad \forall \alpha, \beta \in\R,
\]
where $Q_t$ is defined as in Assumption \ref{Assmpt Long Time 1}.
By this assumption, the above variance is nonzero for all $\alpha\neq \beta$ and $t>0$. Then by \cite[Lemma 2.6]{kim-pollard:93}, $\alpha\mapsto Z_{\alpha,t}$ has a unique supremum almost surely.

It remains for us to show ii).  It can be seen that this will hold if $Z_{\bar{\alpha},t}'' \neq 0$ (the derivative with respect to $\alpha$), almost surely. Since $\bar{\alpha}$ is the unique maximum and by assumption $(Z'_{\alpha,t},Z''_{\alpha,t},Z_{\alpha,t}''',Z_{\alpha,t}'''')$ all exist, if $Z_{\bar{\alpha},t}'' = 0$ then it must also be the case that $Z_{\bar{\alpha},t}' = Z_{\bar{\alpha},t}''' = 0$ (this may be seen by Taylor expanding $Z_{\alpha,t}$ about $\bar{\alpha}$). The result thus follows from Lemma \ref{Lemma Z alpha derivatives} below.
\end{proof}
\begin{lemma}\label{Lemma Z alpha derivatives}
Under Assumption \ref{Assmpt Long Time 2}, for any $t\geq 0$, the probability that there exists an $\alpha\in \R$ such that
\begin{equation}\label{eqn Z derivatives zero}
Z_{\alpha,t}' = Z_{\alpha,t}'' = Z_{\alpha,t}''' = 0
\end{equation}
is zero, where $Z_{\alpha,t}$ is defined in \eqref{eqn Zalpha}.
\end{lemma}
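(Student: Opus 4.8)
\emph{Proof plan.} Fix $t>0$; this is the only case needed for Lemma~\ref{Lemma D delta} (at $t=0$ the vector $X_0$ is deterministic and there is nothing probabilistic to prove). Recall from \eqref{eqn Zalpha} that $Z_{\alpha,t}=2\langle X_t,\varphi_\alpha-\varphi_0\rangle-\norm{\varphi_\alpha-\varphi_0}^2$, where $X_t$ solves the linear equation $dX_t=AX_tdt+B(t)dW_t$ from a deterministic initial condition, so $X_t$ is Gaussian. Because $\alpha\mapsto\varphi_\alpha$ is four times differentiable in $H$ (Assumption~\ref{assump varphi}(i) together with the last part of Assumption~\ref{Assmpt Long Time 2}), the map $\alpha\mapsto(Z'_{\alpha,t},Z''_{\alpha,t},Z'''_{\alpha,t})$ is $C^1$ in $\alpha$, with $k$-th $\alpha$-derivative $Z^{(k)}_{\alpha,t}=2\langle X_t,\varphi^{(k)}_\alpha\rangle-\tfrac{d^k}{d\alpha^k}\norm{\varphi_\alpha-\varphi_0}^2$ for $k=1,2,3,4$ (writing $\varphi^{(k)}_\alpha$ for $\varphi_\alpha',\varphi_\alpha'',\varphi_\alpha''',\varphi_\alpha''''$). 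The plan is a discretisation (chaining) argument in $\alpha$: the regularity above controls how fast the triple $(Z',Z'',Z''')$ can move, while Assumption~\ref{Assmpt Long Time 2} guarantees that at each $\alpha$ this triple is a genuinely two-dimensionally non-degenerate Gaussian, which makes ``all three coordinates zero'' an extremely unlikely event.

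Since the event that \eqref{eqn Z derivatives zero} holds for some $\alpha\in\R$ is the countable union over $n\in\N$ of the events that it holds for some $\alpha\in[-n,n]$, it suffices to fix a bounded interval $I=[-n,n]$ and show the latter event has probability zero. On $I$ I would first record two estimates. \textbf{(a) A Lipschitz bound.} By Assumption~\ref{assump varphi}(ii) the norms $\norm{\varphi'_\alpha},\norm{\varphi''_\alpha},\norm{\varphi'''_\alpha}$ are constant, $\sup_\alpha\norm{\varphi''''_\alpha}<\infty$ by Assumption~\ref{Assmpt Long Time 2}, and $\alpha\mapsto\norm{\varphi_\alpha-\varphi_0}$ is continuous hence bounded on $I$; consequently there is a deterministic $C_I$ with $|Z^{(k)}_{\alpha,t}|\le C_I(1+\norm{X_t})$ for all $\alpha\in I$, $k=1,2,3,4$, so in particular $\alpha\mapsto(Z'_{\alpha,t},Z''_{\alpha,t},Z'''_{\alpha,t})$ is $C_I(1+\norm{X_t})$-Lipschitz on $I$. \textbf{(b) A uniform non-degeneracy bound.} The vector $(Z'_{\alpha,t},Z''_{\alpha,t},Z'''_{\alpha,t})$ is Gaussian with covariance matrix $4K^\varphi(\alpha,t)$ — exactly the computation already carried out for $Z_{\alpha,t}-Z_{\beta,t}$ in the proof of Lemma~\ref{Lemma D delta}. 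The matrix $K^\varphi(\alpha,t)$ is positive semidefinite, depends continuously on $\alpha$, and by Assumption~\ref{Assmpt Long Time 2} has at most one zero eigenvalue; hence its second-largest eigenvalue $\lambda_{(2)}(\alpha,t)$ is continuous and strictly positive on $I$, so that $c_{I,t}:=\inf_{\alpha\in I}\lambda_{(2)}(\alpha,t)>0$.

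The key elementary step is then the small-ball estimate: for every $a>0$ and $\alpha\in I$,
\[
\mathbb{P}\big(|Z'_{\alpha,t}|\le a,\ |Z''_{\alpha,t}|\le a,\ |Z'''_{\alpha,t}|\le a\big)\ \le\ \frac{3a^2}{8\,c_{I,t}}.
\]
To see this, project the Gaussian triple onto the plane spanned by the two leading eigenvectors of its covariance $4K^\varphi(\alpha,t)$; the cube $[-a,a]^3$ lies inside the ball of radius $a\sqrt{3}$, so the probability is bounded by the mass that a non-degenerate planar Gaussian whose two variances are $\ge 4c_{I,t}$ assigns to a disc of radius $a\sqrt{3}$, namely at most $\pi(a\sqrt3)^2/(2\pi\cdot4c_{I,t})$. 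Now fix $R\in\N$, $\delta\in(0,n)$, and split $I$ into $N\le 3n/\delta$ subintervals of length $\le\delta$ with left endpoints $\alpha_1,\dots,\alpha_N$. On $\{\norm{X_t}\le R\}$, estimate (a) shows that if some $\alpha\in I$ realises \eqref{eqn Z derivatives zero}, then for the nearest grid point $\alpha_j$ we have $|Z'_{\alpha_j,t}|\vee|Z''_{\alpha_j,t}|\vee|Z'''_{\alpha_j,t}|\le C_I(1+R)\delta$; by the small-ball estimate and a union bound,
\[
\mathbb{P}\Big(\{\exists\,\alpha\in I \text{ with }\eqref{eqn Z derivatives zero}\}\cap\{\norm{X_t}\le R\}\Big)\ \le\ N\cdot\frac{3C_I^2(1+R)^2\delta^2}{8c_{I,t}}\ \le\ \frac{9nC_I^2(1+R)^2}{8c_{I,t}}\,\delta.
\]
Letting $\delta\to0$ makes the left side $0$ for each $R$, and letting $R\to\infty$ yields $\mathbb{P}(\exists\,\alpha\in I\text{ with }\eqref{eqn Z derivatives zero})=0$; taking the union over $n\in\N$ finishes the proof.

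The only step requiring genuine care is (b): one must verify that the covariance of $(Z'_{\alpha,t},Z''_{\alpha,t},Z'''_{\alpha,t})$ is precisely $4K^\varphi(\alpha,t)$, and recognise that Assumption~\ref{Assmpt Long Time 2} is exactly the hypothesis that this $3\times3$ matrix never has rank $\le 1$ — which is what supplies the two-dimensional non-degenerate Gaussian needed to absorb the factor $\delta^2$ and beat the $1/\delta$ growth in the number of grid points. The differentiability input $\varphi''''_\alpha\in H$ with uniformly bounded norm is likewise essential, since it is precisely what makes $Z'''_{\alpha,t}$ Lipschitz in $\alpha$ in step (a); without it the chaining argument would not close. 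Everything else — the chaining, the union bounds, the planar small-ball bound — is routine.
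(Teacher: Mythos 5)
Your argument is correct and follows the paper's own strategy quite closely: reduce to a compact $\alpha$-interval, discretize, obtain a Lipschitz bound on $(Z'_{\alpha,t},Z''_{\alpha,t},Z'''_{\alpha,t})$ from the uniformly bounded $\varphi''''_\alpha$, identify $4K^\varphi(\alpha,t)$ as the covariance of this Gaussian triple, lower-bound its second-largest eigenvalue uniformly over the compact interval using Assumption~\ref{Assmpt Long Time 2} and continuity, truncate on $\norm{X_t}\leq R$, and close with a Gaussian small-ball estimate that beats the union bound over grid points. The paper's Lemma~\ref{Lemma constant CM} states the small-ball estimate coordinate-wise after diagonalizing by $O^\varphi(\alpha^n_j,t)$ and bounds each non-degenerate coordinate by the Gaussian density, whereas you use a two-dimensional disc projection; this is a cosmetic difference and the two proofs are otherwise essentially identical.
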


\begin{proof}
Fix $M>0$. We will show that the probability that \eqref{eqn Z derivatives zero} holds for any $\alpha \in [-M,M]$ is zero. The lemma then follows directly from a covering argument.

For $n > 0$ and $j\in \{1, \dots, n\}$, let $\alpha^n_j = -M + 2M(j-1/2) / n$. Let $B^n_j$ be the interval $[\alpha^n_j - M / n, \alpha^n_j + M/n]$. Fix $m>0$. Using the result Lemma \ref{Lemma constant CM} below, we find that
\begin{align*}
&\mathbb{P}\left( Z_{\alpha,t}' = Z_{\alpha,t}'' = Z_{\alpha,t}''' = 0,  \text{for some }\alpha \in [-M,M]\right) \\ 
&\quad \leq \mathbb{P}(\norm{X}_t > m) + \sum_{j=1}^n \mathbb{P}\left( Z_{\alpha,t}' = Z_{\alpha,t}'' = Z_{\alpha,t}''' = 0  \text{ for some }\alpha \in B^n_j, \text{ and } \norm{X_t} \leq m\right)  \\
&\quad\leq \mathbb{P}(\norm{X}_t > m) + C_M(m+1)^2 n^{-1}.
\end{align*}
We obtain the result by taking $m,n\to \infty$, such that $\mathbb{P}(\norm{X}_t > m)\to 0$ and $C_M(m+1)^2 n^{-1}\to 0$.
\end{proof}

The following lemma uses variables defined in the proof of Lemma \ref{Lemma Z alpha derivatives}.

\begin{lemma}\label{Lemma constant CM}
Under Assumption \ref{Assmpt Long Time 2}, for each $t>0$ there exists a positive constant $C_M$ independent of $m$ and $n$ such that
\[
 \mathbb{P}\left( Z_{\alpha,t}' = Z_{\alpha,t}'' = Z_{\alpha,t}''' = 0  \text{ for some }\alpha \in B^n_j, \text{ and } \norm{X_t} \leq m\right)  \leq C_M(m+1)^2n^{-2}.
 \]
\end{lemma}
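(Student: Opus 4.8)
The plan is to reduce the estimate to a small--ball bound for a (possibly degenerate) Gaussian vector in $\R^3$. Write $\varphi_\alpha^{(1)} = \varphi_\alpha'$, $\varphi_\alpha^{(2)} = \varphi_\alpha''$, $\varphi_\alpha^{(3)} = \varphi_\alpha'''$, and introduce the bounded linear map $\Phi_\alpha:H\to\R^3$, $\Phi_\alpha(v) = (\langle \varphi_\alpha^{(1)},v\rangle,\langle \varphi_\alpha^{(2)},v\rangle,\langle \varphi_\alpha^{(3)},v\rangle)$. Differentiating \eqref{eqn Zalpha} with respect to $\alpha$, the random vector $W_\alpha := (Z_{\alpha,t}',Z_{\alpha,t}'',Z_{\alpha,t}''')$ equals $2\Phi_\alpha(X_t) - D(\alpha)$, where $D(\alpha) := (g'(\alpha),g''(\alpha),g'''(\alpha))$ with $g(\alpha):=\norm{\varphi_\alpha-\varphi_0}^2$ is deterministic. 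Since $\alpha\mapsto\varphi_\alpha^{(k)}$ is globally Lipschitz in $H$ for $k=1,2,3$ (Assumption \ref{assump varphi}(ii)), the map $\alpha\mapsto\Phi_\alpha$ is Lipschitz in operator norm; and since $\norm{\varphi_\alpha^{(k)}}$ is independent of $\alpha$ for $k=1,2,3$ and $\varphi_\alpha''''$ exists with uniformly bounded norm (Assumption \ref{Assmpt Long Time 2}), the map $\alpha\mapsto D(\alpha)$ is Lipschitz on $[-M,M]$.

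First I would localise. On the event $\{W_\alpha = 0 \text{ for some }\alpha\in B^n_j\}\cap\{\norm{X_t}\le m\}$, pick such an $\alpha$; then, using $W_\alpha=0$,
\[
W_{\alpha^n_j} = W_{\alpha^n_j} - W_\alpha = 2\bigl(\Phi_{\alpha^n_j}-\Phi_\alpha\bigr)(X_t) - \bigl(D(\alpha^n_j)-D(\alpha)\bigr),
\]
and the two Lipschitz bounds together with $|\alpha-\alpha^n_j|\le M/n$ and $\norm{X_t}\le m$ give $\norm{W_{\alpha^n_j}}\le C_M(m+1)/n$ for a constant $C_M$ depending only on $M$ and the Lipschitz constants. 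Equivalently, $\Phi_{\alpha^n_j}(X_t) \in \mathcal{B}\bigl(\tfrac12 D(\alpha^n_j),\,\rho\bigr)$ with $\rho := C_M(m+1)/(2n)$. Hence it suffices to bound, uniformly over the centre, the probability that $\Phi_{\alpha^n_j}(X_t)$ lands in a ball of radius $\rho$ in $\R^3$.

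Next I would carry out the small--ball estimate. The process $X_t = P^A_t X_0 + \tilde X_t$, with $X_0$ deterministic and $\tilde X_t$ a centred Gaussian in $H$ with covariance operator $Q_t$ (Assumption \ref{Assmpt Long Time 1}); so $\Phi_{\alpha^n_j}(X_t)$ is a Gaussian vector with a deterministic mean (which only shifts the ball's centre) and covariance matrix exactly $K^\varphi(\alpha^n_j,t)$ — this is the polarised form of the variance computation already used in the proof of Lemma \ref{Lemma D delta}. By Assumption \ref{Assmpt Long Time 2}, at most one eigenvalue of $K^\varphi(\alpha^n_j,t)$ vanishes, so the two largest eigenvalues $\lambda_{(1)}^\varphi(\alpha^n_j,t)\ge\lambda_{(2)}^\varphi(\alpha^n_j,t)$ are strictly positive. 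Projecting $\Phi_{\alpha^n_j}(X_t)$ orthogonally onto the span of the corresponding two eigenvectors (a contraction, so a ball of radius $\rho$ maps into a ball of radius $\rho$) yields a nondegenerate $2$-dimensional Gaussian whose density is bounded by $\bigl(2\pi\sqrt{\lambda_{(1)}^\varphi\lambda_{(2)}^\varphi}\bigr)^{-1}$; integrating this over a planar disk of radius $\rho$ gives, uniformly in the centre $x_0$,
\[
\mathbb{P}\bigl(\Phi_{\alpha^n_j}(X_t)\in\mathcal{B}(x_0,\rho)\bigr) \;\le\; \frac{\rho^2}{2\sqrt{\lambda_{(1)}^\varphi(\alpha^n_j,t)\,\lambda_{(2)}^\varphi(\alpha^n_j,t)}}.
\]
Since the product of the two largest eigenvalues of $K^\varphi(\cdot,t)$ is continuous in $\alpha$ and strictly positive on $[-M,M]$, it is bounded below there by some $c_M(t)>0$; as $\alpha^n_j\in[-M,M]$, inserting $\rho = C_M(m+1)/(2n)$ produces the claimed bound $C_M(m+1)^2 n^{-2}$ with a constant depending on $M$ and $t$ only.

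The step I expect to be the main obstacle is the small--ball estimate in the presence of degeneracy: because $K^\varphi(\alpha,t)$ may be only rank $2$, one cannot bound an $\R^3$-density directly, so the reduction to a nondegenerate $2$-dimensional marginal — together with the uniformity over $\alpha^n_j\in[-M,M]$, which rests on the continuity of the ordered eigenvalues and the ``at most one zero eigenvalue'' hypothesis of Assumption \ref{Assmpt Long Time 2} — is the crux. The localisation and the Lipschitz bounds of the first two paragraphs are routine, and the uniformity of the small--ball bound over the ball's centre is immediate since a Gaussian density is maximised at its mean.
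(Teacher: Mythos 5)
Your proposal is correct and, modulo bookkeeping, follows the same route as the paper: localise to $\alpha^n_j$ via Taylor/Lipschitz bounds on $\varphi'_\alpha,\varphi''_\alpha,\varphi'''_\alpha$ (using the uniformly bounded fourth derivative), identify the covariance of the relevant Gaussian triple with $K^\varphi(\alpha^n_j,t)$, and then apply a small-ball estimate that exploits the fact that at most one eigenvalue vanishes, with a compactness argument to make the eigenvalue lower bound uniform over $[-M,M]$. The only cosmetic difference is that you project onto the two-dimensional nondegenerate eigenspace and bound the planar density, whereas the paper rotates by $O^\varphi$ and multiplies two one-dimensional Gaussian interval bounds — an equivalent computation.
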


\begin{proof}
Let $\mathcal{G}(\alpha) := - \norm{\varphi_0 - \varphi_\alpha}^2$ for $\alpha\in\R$. Let $\alpha \in B^n_j$. By Taylor's theorem, for some $\tilde{\alpha} \in B^n_j$,
\[
Z_{\alpha,t}''' = Z_{\alpha^n_j,t}''' + (\alpha-\alpha^n_j)Z_{\tilde{\alpha},t}'''' = Y_{\alpha^n_j,t}''' + \mathcal{G}'''(\alpha^n_j) +(\alpha-\alpha^n_j)\left(Y_{\tilde{\alpha},t}'''' + \mathcal{G}''''(\tilde{\alpha})\right),
\]
where $Y_{\alpha, t}$ is defined in \eqref{eqn Zalpha}.
Now by the Cauchy-Schwarz inequality, $\norm{Y_{\tilde{\alpha},t}''''} \leq 2\norm{\varphi_{\tilde{\alpha}}''''}\norm{X_t}$. By assumption, $ \left|\mathcal{G}''''(\alpha^n_j)\right|$ possesses a uniform upper bound. We thus find that if $Z_{\alpha,t}''' = 0$ for some $\alpha \in B^n_j$ and $\norm{X_t} \leq m$, then since $|\alpha-\alpha^n_j| \leq n^{-1}$,
\begin{equation}
\left| Y_{\alpha^n_j,t}''' + \mathcal{G}'''(\alpha^n_j)\right| \leq \mathcal{K}(m+1)n^{-1},\label{eqn Y bound 1}
\end{equation}
for some constant $\mathcal{K}$ which is independent of $n$, $j$ and $m$. We find similarly (readjusting the constant $\mathcal{K}$) that if $Z_{\alpha,t}' = Z_{\alpha,t}'' = 0$, then
\begin{align}
&\left| Y_{\alpha^n_j,t}' + \mathcal{G}'(\alpha^n_j)\right| \leq \mathcal{K}(m+1)n^{-1},\label{eqn Y bound 2}\\
&\left| Y_{\alpha^n_j,t}'' + \mathcal{G}''(\alpha^n_j)\right| \leq \mathcal{K}(m+1)n^{-1}.\label{eqn Y bound 3}
\end{align}
By construction, we can see that $4K^\varphi(\alpha^n_j, t)$ (as defined in Assumption \ref{Assmpt Long Time 2}) is the covariance matrix of the $\R^3$-valued Gaussian random variable $(Y_{\alpha^n_j,t}', Y_{\alpha^n_j,t}'', Y_{\alpha^n_j,t}''')$.
Moreover, recall the definition of $O^\varphi(\alpha,t)$ in \eqref{eqn O definition} and define $\R^3 \ni {\bf Y} := O^{\varphi}(\alpha^n_j,t)\cdot(Y'_{\alpha^n_j,t},Y''_{\alpha^n_j,t},Y'''_{\alpha^n_j,t})$ and $\R^3 \ni\boldsymbol{\mathcal{G}} := O^{\varphi}(\alpha^n_j,t)\cdot(\mathcal{G}'(\alpha^n_j),\mathcal{G}''(\alpha^n_j),\mathcal{G}'''(\alpha^n_j))$ (these are matrix-vector multiplications).  It may be observed from \eqref{eqn Y bound 1}-\eqref{eqn Y bound 3} that there exists a constant $\mathcal{K}_0$ (independent of $n$, $j$ and $m$) such that if $Z_{\alpha,t}'=Z_{\alpha,t}''=Z_{\alpha,t}''' = 0$ for some $\alpha\in B^n_j$, then
\begin{align*}
\norm{{\bf Y} + \boldsymbol{\mathcal{G}}}_{\infty} \leq \mathcal{K}_0(m+1)n^{-1}.
\end{align*}
Here $\norm{\cdot}_\infty$ is the supremum norm over $\R^3$. It thus suffices for us to show that there exists a constant $C_M$ such that
\begin{equation}\label{eqn bound norm Y}
\mathbb{P}\left(\norm{{\bf Y} + \boldsymbol{\mathcal{G}}}_{\infty} \leq \mathcal{K}_0 (m+1)n^{-1}\right) \leq C_M (m+1)^2 n^{-2}.
\end{equation}
Now the covariance matrix of ${\bf Y}$ is $4\Lambda^\varphi(\alpha^n_j,t)$ (defined in Assumption \ref{Assmpt Long Time 2}), which means that the three elements of ${\bf Y}$ are mutually independent Gaussian variables, with variances $(4\lambda^\varphi_i(\alpha^n_j,t))_{i\in\lbrace 1,2,3\rbrace}$. 

We claim that there must exist a constant $\kappa > 0$ such that for all $\alpha \in [-M,M]$, no more than one of $(\lambda^\varphi_i(\alpha, t))_{i\in \lbrace1,2,3\rbrace}$ are less than $\kappa$. To see this, assume for a contradiction that there exists a sequence $(\alpha^r)_{r\in\N} \subset [-M,M]$ such that at least two of $(\lambda^\varphi_i(\alpha^r,t))_{i\in \lbrace 1,2,3\rbrace}$ are less than $r^{-1}$. From this we must be able to obtain a subsequence $(\tilde{\alpha}^p)_{p\in\N}$ such that for some $(k,l)\in \lbrace 1,2,3\rbrace$, $\lambda^\varphi_k(\tilde{\alpha}^p,t),\lambda^\varphi_l(\tilde{\alpha}^p,t)\leq p^{-1}$. By the compactness of $[-M,M]$, there must exist a point $\bar{\alpha}$ such that a subsequence of $(\tilde{\alpha}^p)_{p\in\N}$ converges to $\bar{\alpha}$. By continuity, $\lambda^\varphi_k(\bar{\alpha},t) = \lambda^\varphi_l(\bar{\alpha},t) = 0$. This contradicts Assumption \ref{Assmpt Long Time 2}.

Now the probability of a 1-dimensional Gaussian variable of variance $\Sigma$ being in some interval of width $\delta$ is upper-bounded by $\delta (2\pi \Sigma)^{-1/2}$. Since the variances of at least two of the $(\lambda^\varphi_i(\alpha^n_j,t))_{i\in\lbrace 1,2,3\rbrace}$ are lower-bounded by $4\kappa$, we have that
\begin{equation*}
\mathbb{P}\left(\norm{{\bf Y} + \boldsymbol{\mathcal{G}}}_{\infty} \leq \mathcal{K}_0 (m+1)n^{-1}\right) \leq \left(2\mathcal{K}_0\frac{m+1}{n}\right)^2 (8\pi\kappa)^{-1}.
\end{equation*}
This gives us \eqref{eqn bound norm Y}. 
\end{proof}

\subsection*{Acknowledgments}
The authors would like to thank P. Bressloff for a helpful discussion which got them started on the problem. They also thank W. Stannat, E. Lang and J. Kr\"uger for the invitation and interesting exchanges in Berlin.

\bibliographystyle{siam}
\bibliography{../StochasticWaves}

\end{document}